\theoremstyle{plain}
\newtheorem{theorem}{Theorem}[section]
\newtheorem{corollary}[theorem]{Corollary}
\newtheorem{lemma}[theorem]{Lemma}
\newtheorem{proposition}[theorem]{Proposition}
\theoremstyle{definition}
\theoremstyle{remark}
\newtheorem{remark}{Remark}[section]
\numberwithin{equation}{section}
\numberwithin{table}{section}
\numberwithin{figure}{section}
\begin{document}
	
\title[Gelfand-Tsetlin modules for Lie algebras of rank $2$]{Gelfand-Tsetlin modules for Lie algebras of rank $2$}

	
\author{Milica An\dj eli\'c}
\address{Department of Mathematics, Kuwait University, Al-Shadadiyah, Kuwait}
\email{milica.andelic@ku.edu.kw}
	
\author{Carlos M. da Fonseca}
\address{Kuwait College of Science and Technology, Doha District, Safat
	13133, Kuwait}
\email{c.dafonseca@kcst.edu.kw}
\address{Faculty of Applied Mathematics and Informatics, Technical
	University of Sofia, Kliment Ohridski Blvd. 8, 1000 Sofia, Bulgaria}
\email{carlos.fonseca@tu-sofia.bg}
\address{Chair of Computational Mathematics, University of Deusto, 48007
	Bilbao, Spain}

\author{Vyacheslav Futorny}
\address{Shenzhen International Center for Mathematics, Southern University of Science and Technology, China}
\email{vfutorny@gmail.com}

\author{Andrew Tsylke}
\address{Kyiv Taras Shevchenko University, Kyiv, Ukraine}
\email{andrew4tsylke@gmail.com}
	
	\subjclass[2000]{17B10, 16G99}
	
	\date{\today}
	
	\keywords{Gelfand-Tsetlin modules, Gelfand-Tsetlin basis, Lie Algebras}
	
\begin{abstract}
		We explicitly construct families of simple modules for all simple Lie algebras of rank $2$ 
		on which a certain commutative subalgebra acts diagonally with a simple spectrum. 
		In type $A$, these modules are the well-known generic Gelfand-Tsetlin modules.
\end{abstract}
	
	\maketitle



\section{Introduction.}
\addcontentsline{toc}{section}{Section1.}

Let 
$\mathfrak{g}$ 
be a simple finite dimensional simple Lie algebra over the complex numbers and let
$\mathfrak{h}$ 
be a fixed Cartan subalgebra of 
$\mathfrak{g}$.
A $\mathfrak{g}$-module $M$ is \emph{weight} (with respect to $\mathfrak{h}$) if $\mathfrak{h}$ is diagonalizable on $M$, that is
$$M=\bigoplus_{\lambda\in \mathfrak{h}^*}M_{\lambda},$$ where $hv=\lambda(h)v$ for any $v\in M_{\lambda}$ and $h\in \mathfrak h$. The subspace $M_{\lambda}$ is called a weight subspace of weight $\lambda$, if $M_{\lambda}\neq 0$.

Simple weight modules were studied extensively in the last 50 years. Classical results of Fernando \cite{F1990} and Mathieu \cite{M2000} provided a complete classification of simple weight modules with finite dimensional weight subspaces. On the other hand, the classification of simple weight modules with infinite dimensional weight subspaces is still an open problem.
The most progress has been achieved in the case of Lie algebras of type A, where
simple Gelfand-Tsetlin modules were classified (see \cite{FGR2019, RZ2018, V2018, W2024} and references therein). These are weight modules with diagonalizable action of a certain commutative subalgebra of the universal enveloping 
algebra $U(\mathfrak{g})$, called the Gelfand-Tsetlin subalgebra. Generically, such simple Gelfand-Tsetlin modules have  infinite dimensional weight subspaces. In particular, in the case of $\mathfrak{sl}(2)$ we obtain in this way all simple weight modules. They depend on two parameters and have $1$-dimensional weight subspaces. 
In the case of $\mathfrak{g}=\mathfrak{sl}(3)$ a complete description of simple Gelfand-Tsetlin modules was given in \cite{FGR2021}. 

The original approach to the study of weight modules was based on the reduction to the study of simple modules over the centralizer $U_0(\mathfrak{g})$ of the Cartan subalgebra $\mathfrak{h}$ in
 the universal enveloping algebra $U(\mathfrak{g})$: if $M$ is a simple weight $\mathfrak{g}$-module then 
 $M_\lambda$ is a simple $U_0(\mathfrak{g})$-module for any weight $\lambda$ of $M$.
 Hence, 
  every simple weight $\mathfrak{g}$-module corresponds to a  simple (not unique) $U_0(\mathfrak{g})$-module and, in its turn, any simple $U_0(\mathfrak{g})$-module corresponds to a unique simple weight $\mathfrak{g}$-module. This approach was successfully used in the case of $\mathfrak{g}=\mathfrak{sl}(3)$ \cite{BFL1995, BL1982a, BL1982b, BL1987,  DFO1989, DFO1982, F1990, F1991}, etc.
  
  As the structure of  $U_0(\mathfrak{g})$ is rather complicated (there are two commuting generators for $\mathfrak{sl}(2)$, and there are six generators  for $\mathfrak{sl}(3)$ with three polynomial relations and five commuting generators among them), there were essentially no attempts beyond the $\mathfrak{sl}(3)$ case.

The aim of this paper is to revise the centralizer approach and to construct new simple weight modules with infinite dimensional weight subspaces for all simple Lie algebras of rank $2$. 
 We explicitly construct  simple generic modules in the category  of $\Gamma$-pointed modules for a  commutative subalgebra $\Gamma$ of  the centralizer $U_0(\mathfrak{g})$.
In type $A$, $\Gamma$-pointed modules are the celebrated Gelfand-Tsetlin modules, and similar constructions can be viewed analogously in other types.


The structure of the paper is the following. 
In Section 2 we discuss the structure of  the centralizer $U_0(\mathfrak{g})$ of the Cartan subalgebra $\mathfrak{h}$ in
 the universal enveloping algebra $U(\mathfrak{g})$, prove that 
$U_0(\mathfrak{g})$ is finitely generated and finitely presented.
We  give a  generating set of elements and describe an algorithm for computing all relations between them. In Section 4 we consider the Lie algebra of type
$A_2$. Our approach is a suitable modification of \cite{BL1982a} and \cite{F1989}, and we recover a construction of generic torsion free 
$A_2$-modules with infinite dimensional weight spaces obtained in \cite{F1989} and \cite{FGR2021}.  They are \emph{tame} Gelfand-Tsetlin modules with diagonalizable action of the Gelfand-Tsetlin subalgebra.
In Section 5 we consider the Lie algebra $\mathfrak{g}$ of type
$C_2$ and
give the generators and the defining relations of the centralizer $U_0(\mathfrak{g})$.
We construct two $4$-parameter families of simple  torsion free 
$C_2$-modules with infinite dimensional weight spaces. These modules are $\Gamma$-pointed, where $\Gamma$ is the $4$-generated Gelfand-Tsetlin subalgebra of $U_0(\mathfrak{g})$ which has a simple spectrum on such representations.
Finally, in Section 6  we construct a $3$-parameter family of simple  torsion free 
$G_2$-modules with infinite dimensional weight spaces. These modules are $\Gamma$-pointed with respect to a $4$-generated Gelfand-Tsetlin subalgebra $\Gamma$ of $U_0(\mathfrak{g})$ which has a simple spectrum on such representations.

We hope in a next work to use the defined representations to construct new simple modules for all simple finite dimensional and affine Lie algebras via the parabolic induction. 

\section{Cartan centralizers}
\addcontentsline{toc}{section}{Section2.}

Let $\Delta= \{\alpha_1, \ldots, \alpha_{k_1} \}$ be the root system of 
$(\mathfrak{g},\mathfrak{h})$ and let $\pi = \{ \beta_1, \ldots, \beta_{k_0} \}$ be a basis of $\Delta$. 
With respect to the basis $\pi$, we have the
decomposition of $\Delta$ into positive and negative roots: 
$$\Delta= \Delta^+ \cup \Delta^-\, .$$  Let 
$W$ be the  Weyl group of the root system $\Delta$. 
Choose  a basis $G = G_0 \cup G_1$ of the Lie algebra  $\mathfrak{g}$, 
where  
$G_0 = \{h_\beta\in \mathfrak{h}\, | \,\beta \in \pi \}$  and
$G_1  = \{ e_\alpha\in \mathfrak{g}_{\alpha}\setminus \{0\}\, |\, \alpha \in \Delta \}$.  
Set 
$f_\alpha=e_{-\alpha}$. 

Denote by 
$U_0(\mathfrak{g})$  the centralizer of the Cartan subalgebra 
$\mathfrak{h}$  in the universal enveloping algebra 
$U(g)$. 
For each $i \in \mathbb N$, let us denote by  
$U^{(i)}(\mathfrak{g})$ the
vector subspace of
$U(\mathfrak{g})$ 
spanned by the monomials 
$x_1 x_2 \cdots x_j$, 
where 
$x_1, \ldots, x_j \in G$ 
and 
$j \le i$. Thus we get an
increasing sequence of subspaces 
\begin{equation*}
  U^{(1)}(\mathfrak{g}) \subset U^{(2)}(\mathfrak{g}) \subset \cdots \subset U^{(i)}(\mathfrak{g})  \subset \cdots, 
\label{eq:refer001} 
\end{equation*}
which defines a canonical filtration of 
$U(\mathfrak{g})$.
The canonical filtration of 
$U_0(\mathfrak{g})$ is the sequence of subspaces
\begin{equation*}
  U_0^{(1)}(\mathfrak{g}) \subset U_0^{(2)}(\mathfrak{g}) \subset \cdots \subset U_0^{(i)}(\mathfrak{g})  \subset \cdots,
\label{eq:refer002}  
\end{equation*}
where 
$U_0^{(i)}(\mathfrak{g}) = U^{(i)}(\mathfrak{g}) \cap U_0(\mathfrak{g})$.

For any monomial $X=x_1x_2\cdots x_j$ denote by $T_0(X)$ set of monomials received from $X$ by permutations of variables $x_i$.
Define the degree function 
$\deg(y)$   as follows:  
$\deg(y)=i$ if
$y    \in U^{(i)}(\mathfrak{g})$, but 
$y \notin U^{(i-1)}(\mathfrak{g})$.
    
Now, fix some order on the set 
$G$:
\begin{equation}
x_1 \leq x_2 \leq \cdots \leq x_k,
\label{eq:refer004}
\end{equation}
where $k$ is the dimension of $\mathfrak{g}$.

Define standard monomials of 
$U(\mathfrak{g})$ with respect to this order as follows:

\begin{equation*}
X_S = x_1^{s_1} x_2^{s_2} \cdots x_k^{s_k}.
\label{eq:refer0041}
\end{equation*}
where 
$S = (s_1, \ldots, s_k)$ is a $k$-tuple of nonnegative integers, with at least one $s_i\ne 0$. 

For every monomial $X$ define a lexicographic order on the set $T_0(X)$ with 
respect to the order fixed in \eqref{eq:refer004}:
if $X_1=x_{i_1}x_{i_2} \cdots x_{i_n}$ and  $X_2=x_{j_1}x_{j_2} \cdots x_{j_m}$,
$X_1,X_2 \in T_0(X)$,
then $X_1 < X_2$ if there exists an index $s \leq \min(n,m)$, such that
 $x_{i_t} = x_{j_t}$, for $t < s$ and $x_{i_s} < x_{j_s}$. 

Denote by $P$ the set of all standard monomials and by $P^{(i)}$ the set of all standard monomials of degree $i$.
Set $P_0 = P \cap U_0(\mathfrak{g})$ and $P^{(i)}_0 = P^{(i)} \cap U_0(\mathfrak{g})$.

The following lemma is an immediate consequence of the PBW theorem.

\begin{lemma}\label{lem:lemCart_21}
	\
\begin{itemize}
\item[1.] The set of the standard monomials $\Bar{P}^{(i)} = P^{(1)} \cup P^{(2)} \cup \cdots \cup P^{(i)}$
forms a basis for the vector space 
$U^{(i)}(\mathfrak{g})$.
\item[2.] The set of the  standard monomials ${\bar{P}_0^{(i)}} = P_0^{(1)} \cup P_0^{(2)} \cup \cdots \cup P_0^{(i)}$
forms a basis for the vector space 
$U_0^{(i)}(\mathfrak{g})$.
\item[3.] If 
$a \in U^{(i)}(\mathfrak{g})$ and
$b \in U^{(j)}(\mathfrak{g})$,
then
$a b \in U^{(i+j)}(\mathfrak{g})$ and 
$ab - ba \in U^{(i+j-1)}(\mathfrak{g})$.
\item[4.] If $X \in P_0^{(i)}(\mathfrak{g})$ and $X_1, X_2 \in T_0(X)$, then $X - Y \in U_0^{(i-1)}(\mathfrak{g})$. 
\item[5.] 
For every monomial $X \in U_0(\mathfrak{g})$, the minimal element of the set $T_0(X)$
with respect to the above lexicographic order is a standard monomial.
\end{itemize}
\end{lemma}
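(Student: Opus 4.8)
The plan is to deduce all five assertions from the Poincar\'e--Birkhoff--Witt theorem together with the single observation that the adjoint action $\operatorname{ad}\mathfrak{h}$ on $U(\mathfrak{g})$ is diagonalizable with the standard monomials as eigenvectors. I would treat parts (1) and (3) first, since they concern $U(\mathfrak{g})$ alone. The filtration $\{U^{(i)}(\mathfrak{g})\}$ defined here by monomial length is exactly the standard PBW filtration, so the PBW theorem gives that $P$ is a basis of $U(\mathfrak{g})$ and hence that $\bar P^{(i)}$ is linearly independent. For the spanning part of (1) I would induct on monomial length: in any product $x_{i_1}\cdots x_{i_j}$ of elements of $G$ with $j\le i$, transposing two adjacent factors replaces $yx$ by $xy-[x,y]$ with $[x,y]\in\mathfrak{g}=U^{(1)}(\mathfrak{g})$, a correction of strictly smaller length; iterating sorts the monomial into standard form modulo standard monomials of degree $<j$, so $\bar P^{(i)}$ spans $U^{(i)}(\mathfrak{g})$.

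For (3), $ab\in U^{(i+j)}(\mathfrak{g})$ is immediate from the definition of the length filtration (write $a,b$ as combinations of monomials of length $\le i$, resp. $\le j$). For $ab-ba\in U^{(i+j-1)}(\mathfrak{g})$ I would invoke the standard fact that $\operatorname{gr}U(\mathfrak{g})\cong S(\mathfrak{g})$ is commutative, so the principal symbols of $ab$ and $ba$ agree in degree $i+j$; equivalently one proves it by induction on total length with base case $xy-yx=[x,y]\in U^{(1)}(\mathfrak{g})$ for $x,y\in G$.

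Parts (2) and (4) then follow by adding the weight bookkeeping. The centralizer $U_0(\mathfrak{g})$ is precisely the zero weight space of $\operatorname{ad}\mathfrak{h}$ on $U(\mathfrak{g})$, and every standard monomial $x_1^{s_1}\cdots x_k^{s_k}$ is an $\operatorname{ad}\mathfrak{h}$-eigenvector of weight $\sum_j s_j\,\mu(x_j)$, where $\mu(h_\beta)=0$ and $\mu(e_\alpha)=\alpha$. Hence the basis $\bar P^{(i)}$ of $U^{(i)}(\mathfrak{g})$ splits according to $\operatorname{ad}\mathfrak{h}$-weight, and intersecting with the zero weight space shows that $\bar P_0^{(i)}$ is a basis of $U_0^{(i)}(\mathfrak{g})=U^{(i)}(\mathfrak{g})\cap U_0(\mathfrak{g})$, which is (2). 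For (4) (whose conclusion should read $X_1-X_2\in U_0^{(i-1)}(\mathfrak{g})$), every element of $T_0(X)$ has the same multiset of factors, hence the same $\operatorname{ad}\mathfrak{h}$-weight as $X$, namely $0$; so $X_1,X_2\in U_0(\mathfrak{g})$, while successive adjacent transpositions together with (3) give $X_1-X_2\in U^{(i-1)}(\mathfrak{g})$, whence $X_1-X_2\in U_0^{(i-1)}(\mathfrak{g})$.

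Finally, for (5): all elements of $T_0(X)$ have equal length, so the lexicographic order on $T_0(X)$ is a total order, and its minimal element is produced by the greedy rule ``at each position place the smallest factor not yet used'', which is exactly the rearrangement of the factors of $X$ into non-decreasing order with respect to \eqref{eq:refer004} --- that is, a standard monomial $X_S$. I do not anticipate a genuine obstacle: the lemma is essentially PBW plus a weight decomposition, and the only points that need care are verifying that the length filtration coincides with the PBW filtration so that PBW applies verbatim, and noticing the typo in the statement of (4).
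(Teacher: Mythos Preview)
Your proposal is correct and follows the same approach as the paper, which simply states that the lemma ``is an immediate consequence of the PBW theorem'' without further detail. You have also correctly spotted the typo in part (4): the conclusion should read $X_1 - X_2 \in U_0^{(i-1)}(\mathfrak{g})$.
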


       
Denote by 
$\hat P^{(i)}_0 \subset P^{(i)}_0$
the subset of monomials of degree $i$ generated by 
$G_1$,
and set
$\hat P_0 =\cup_i \hat P^{(i)}_0$.
 Every monomial $X \in U(\mathfrak{g})$ is product of elements $e_\alpha, \alpha \in \Delta$ and $h_\beta, \beta \in \pi$.
Let $X=h_{\beta_1}\cdots h_{\beta_m}e_{\alpha_1}\cdots e_{\alpha_n}$. 
Define two lists of roots associated with $X$  as follows: $L_0=(\beta_1 , \ldots, \beta_m)$ and
$L_1(X)=(\alpha_1 , \ldots, \alpha_m)$. 
Extend this definition to the set of all monomials: if $X' \in T_0(X)$, then $L_0(X')=L_0(X)$, $L_1(X')=L_1(X)$.

For any monomial $X$, the sum of all roots in the list $L_1(X)$ is called 
the weight of the list $L_1(X)$.
Clearly, the weight of the list $L_1(X)$ is zero if and only if $X\in U_0(\mathfrak{g})$.
Furthermore, if $L_0(X)=L_0(Y)$ and $L_1(X)=L_1(Y)$, for some monomials $X,Y$,
then $T_0(X)=T_0(Y)$.

A list $L_1(X)$ for $X \in \bar P_0$ is called decomposable if there exist  $X_1,X_2 \in \bar P_0$
such that $L_1(X) = L_1(X_1) \sqcup L_1(X_2)$ (disjoint union of two lists).
In this case, it follows that $T_0(X) = T_0(X_1 X_2)$.
Conversely, if no such decomposition exists, the list is called indecomposable.
Denote by $B(\Delta)$ the set of all zero-weight lists of roots.

Note that for certain monomials, decomposition is not unique.
For example, in the algebra $A_2$, as we will find in Section \ref{sec:A2torsionfree},  
$$(\alpha_1, \ldots, \alpha_6) 
=(\alpha_1,\alpha_6) \sqcup  (\alpha_2,\alpha_5) \sqcup  (\alpha_3,\alpha_4) 
= (\alpha_1,\alpha_2,\alpha_4) \sqcup  (\alpha_3,\alpha_5,\alpha_6)\, .$$

Denote by
$B_1(\Delta) \subset B(\Delta)$ the set of all zero-weight indecomposable lists.
Define the action of a Weyl group element $w \in W$ on the list of roots
$a = (\alpha_1, \alpha_2, \ldots , \alpha_n)$ as follows:
\begin{equation*}
w(a) = ({w \alpha_1}, {w \alpha_2}, \ldots , {w \alpha_n})\, .
\label{eq:refer0051}
\end{equation*}

Clearly, if $a$ is indecomposable, then $w(a)$ is also indecomposable, 
Moreover if $a$ has zero weight then $w(a)$ also has a zero weight.

The simplest example of an indecomposable list is a list containing only one positive or only one negative root.
Define the set of \emph{primitive} lists 
    $B_2(\Delta) \subseteq B_1(\Delta)$ as follows: 
    $r \in \hat B_2(\Delta)$ if there exists 
        $w \in W$ such that the list $w(r)$ contains only one negative or only one positive root.

Let $M_k= \{(n_1, n_2, \ldots , n_k)\, |\, n_i \in \mathbb N\}$ be the set of vectors with nonnegative integer coordinates.
Define a partial order on the set $M_k$ as follows:  
\begin{equation*}
(n_1, n_2, \ldots , n_k) \le (m_1, m_2, \ldots , m_k) \, ,
\quad \mbox{ if } 
n_i \le m_i 
\, ,\quad \mbox{for all $i=1, \ldots, k$.} 
\label{eq:refer005a}
\end{equation*}

We will use the following result \cite[Lemma 2.6.2]{D1996}.

\begin{lemma}\label{lem:lemCart_22}
If $S_{k} \subset M_k$ is any infinite subset, then there exist two elements $r_1, r_2 \in S_{k}$
    such that  $r_1 \le r_2$.
\end{lemma}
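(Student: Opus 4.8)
The plan is to prove this by induction on $k$; the statement is the classical fact (\emph{Dickson's Lemma}) that the product order on $\mathbb{N}^k$ admits no infinite antichain. First I would make a harmless reduction: since $M_k$ is countable, an infinite subset $S_k$ may be listed as a sequence $r_1, r_2, r_3, \ldots$ of pairwise distinct vectors, and it suffices to produce indices $i<j$ with $r_i \le r_j$ — this actually yields $r_i < r_j$, slightly more than what is asked.

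For $k=1$ the claim is immediate: an infinite sequence of natural numbers contains a non-decreasing infinite subsequence, since either some value is repeated infinitely often (take that constant subsequence) or the values are unbounded and one extracts a strictly increasing one; in either case there is a comparable pair. For the inductive step, assume the result for $\mathbb{N}^{k}$ and let $(r_n)$ be an infinite sequence in $\mathbb{N}^{k+1}$, written $r_n = (a_n, b_n)$ with $a_n \in \mathbb{N}$ and $b_n \in \mathbb{N}^{k}$. Applying the base-case argument to $(a_n)$, pass to an infinite subsequence $(r_{n_t})$ along which the first coordinates are non-decreasing. Now apply the inductive hypothesis to the sequence $(b_{n_t})$ in $\mathbb{N}^k$ to obtain $t < u$ with $b_{n_t} \le b_{n_u}$; since also $a_{n_t} \le a_{n_u}$, comparing coordinatewise gives $r_{n_t} \le r_{n_u}$ with $n_t < n_u$, which closes the induction.

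I do not expect a genuine obstacle here: the argument is free of computation, and the only points needing care are the elementary extraction of a monotone subsequence of naturals in the base case and the bookkeeping of the nested subsequences in the inductive step. An essentially equivalent alternative would be to invoke Noetherianity of $\mathbb{C}[x_1,\dots,x_k]$: the monomial ideal generated by $\{x^{r}\,:\,r\in S_k\}$ has a finite minimal monomial generating set, and that set is necessarily contained in $\{x^{r}\,:\,r\in S_k\}$, so when $S_k$ is infinite it must contain a vector $r$ strictly dominating one of the finitely many minimal vectors $m\in S_k$, and $\{m,r\}$ is the required pair. The triviality of the statement is presumably why the authors simply quote it from \cite{D1996}.
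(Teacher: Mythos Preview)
Your argument is correct: this is the standard inductive proof of Dickson's Lemma, and both the base case (extracting a non-decreasing subsequence of naturals) and the inductive step (passing to a subsequence monotone in the first coordinate, then applying the hypothesis to the tail) are sound. The alternative via Noetherianity of the polynomial ring is also valid.

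There is nothing to compare against: the paper does not prove this lemma at all but simply quotes it as \cite[Lemma~2.6.2]{D1996}. You anticipated this correctly in your final sentence.
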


We have the following properties of indecomposable lists of roots.

\medskip
\begin{lemma}\label{lem:lemCart_23} Let $\mathfrak{g}$ be a simple finite dimensional Lie algebra with root system $\Delta$. Then 
\begin{itemize}
\item[1.] The set of indecomposable lists $B(\Delta)$ is finite.
\item[2.]  If $\mathfrak{g}\in \{A_2,  C_2, G_2\}$, then all indecomposable lists 
 are primitive and hence $B_1(\Delta) = B_2 (\Delta)$.
\end{itemize}
\end{lemma}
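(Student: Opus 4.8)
The plan is to prove the two statements separately, using the structure theory of finite root systems together with the combinatorial tools set up above.

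For part~1, the key observation is that every indecomposable zero-weight list $L_1(X)$ with $X \in \bar P_0$ is in particular an \emph{indecomposable} element of the commutative monoid of multisets of roots under disjoint union. More precisely, to each list $a = (\alpha_1,\dots,\alpha_n)$ associate its \emph{content vector} $c(a) = (n_\alpha)_{\alpha\in\Delta} \in M_{|\Delta|}$, where $n_\alpha$ records the multiplicity of $\alpha$ in $a$. Then $a$ has zero weight iff $\sum_\alpha n_\alpha\,\alpha = 0$, and $a$ is decomposable iff $c(a) = c(a_1)+c(a_2)$ for two nonzero zero-weight content vectors, i.e.\ iff $c(a)$ is a nontrivial sum of two nonzero elements of the submonoid $Z \subset M_{|\Delta|}$ of zero-weight vectors. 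Hence the indecomposable lists correspond exactly to the minimal generators (the Hilbert basis) of the affine monoid $Z$. First I would invoke Lemma~\ref{lem:lemCart_22}: if the set of indecomposable content vectors were infinite, it would contain $r_1 \le r_2$ with $r_1 \neq r_2$; but then $r_2 = r_1 + (r_2 - r_1)$ with $r_2 - r_1$ a nonzero zero-weight vector, contradicting indecomposability of $r_2$. Thus the set of indecomposable content vectors is finite; since each indecomposable content vector is realized by finitely many lists (just orderings, which are identified at the level of $T_0$), $B(\Delta)$ is finite. Strictly speaking this shows finiteness of indecomposable zero-weight \emph{multisets}; the passage to lists is harmless because two lists with the same $L_0$, $L_1$ data generate the same $T_0$-class, so finiteness is unaffected.

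For part~2, I would argue case by case for $\Delta \in \{A_2, C_2, G_2\}$ by explicitly determining $B_1(\Delta)$ and checking primitivity of each member. The strategy is: (i) compute the Hilbert basis of $Z$ in each rank-$2$ case — a small finite computation since there are only $|\Delta| = 6, 8, 12$ roots respectively and the zero-weight condition is two linear equations; (ii) for each indecomposable list $r$, exhibit a Weyl group element $w \in W$ such that $w(r)$ contains only one negative root (or only one positive root), which by definition places $r$ in $B_2(\Delta)$. For step~(ii) the useful principle is that in rank $2$ the Weyl group acts transitively on chambers, so one can rotate any single root to be the unique "extreme" one; concretely, for an indecomposable list of length $\ell$ the zero-weight condition forces a rather rigid shape (e.g.\ in $A_2$ the indecomposable lists are, up to $W$, essentially $(\alpha,-\alpha)$ and $(\alpha_i,\alpha_j,\alpha_k)$ summing to zero with one simple, one highest, etc.), and in each such shape one checks directly that some Weyl rotation isolates a single root on one side of a hyperplane. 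The main obstacle is organizing case~(ii) so that it is genuinely exhaustive: one must be sure the Hilbert basis computation in step~(i) has found \emph{all} indecomposable lists, and then verify the Weyl-isolation property for each, which for $G_2$ (with its long and short roots and order-$12$ Weyl group) is the most delicate bookkeeping. I expect the cleanest writeup lists the indecomposable zero-weight lists up to the $W$-action in a small table for each type and then remarks that for every representative the displayed rotation leaves exactly one root negative, so $B_1(\Delta) = B_2(\Delta)$.
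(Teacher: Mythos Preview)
Your argument for part~1 is essentially identical to the paper's: both pass from lists to multiplicity vectors in $M_{|\Delta|}$ and invoke Lemma~\ref{lem:lemCart_22} (Dickson's lemma) to get two comparable indecomposables $r_1 \le r_2$, then observe that $r_2 - r_1$ furnishes a zero-weight sublist decomposing $r_2$. Your framing in terms of the Hilbert basis of the zero-weight submonoid $Z$ is a clean way to say the same thing.

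For part~2 both you and the paper do a case analysis, but organized differently. You propose to enumerate the Hilbert basis of $Z$ in each type, group by $W$-orbit, and exhibit a rotation isolating a single root of one sign for each representative. The paper instead first disposes of the trivially primitive lists (those already having a unique positive or unique negative root, in particular all lists of length $\le 3$), and then for the remaining indecomposable $r$ derives structural constraints---no pair $\alpha,-\alpha$, no three roots summing to zero---which in $C_2$ force $r$ to involve exactly three distinct roots, after which a single Weyl reflection is checked by hand; $G_2$ is declared analogous. Your route is more systematic and self-verifying (one literally lists everything), while the paper's buys brevity by using the sublist constraints to pin down the shape of $r$ before touching the Weyl group. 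Either is fine; if you write yours up, make sure the Hilbert-basis enumeration for $G_2$ is genuinely complete, since that is where the bookkeeping burden sits.
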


\begin{proof}
Define the function 
$\sigma$ on the set  $B(\Delta)$ as follows:
 $$\sigma (\alpha_1, \alpha_2, \ldots , \alpha_m) = (n_1, n_2, \ldots , n_{|\Delta|})\, ,$$
where $n_i$ is the number of occurrences of the $i$-th root of $\Delta$ (with respect to the ordering in \eqref{eq:refer004}) in the list $({\alpha_1}, {\alpha_2}, \ldots , {\alpha_m})$.
It is clear that a list $r_1$ is a sublist of a list $r_2$ if and only if 
$\sigma(r_1) \le \sigma(r_2)$.
Now, the proof of the first statement follows from the previous lemma.

The second statement for the case $A_2$ is obvious. We will now prove the second statement for the case $C_2$.
Let 
$\pi=\{\beta_{1},\beta_{2}\}$ be a basis of the root system. 
For convenience, we will represent the roots of $\Delta$ as vectors in this basis: $(i,j) = i\beta_{1} + j\beta_{2}$.
Then $\Delta = \{ (1,0),(0,1),(1,1),(2,1),(-1,0),(0,-1),(-1,-1),(-2,-1)\}$.
Let $v=(i,j) \in \Delta$ be any root. 
The action of the Weyl group is given by $w_1 (v )= (2j-i,j)$ and $w_2 (v) = (i,j-i)$, where $w_1$ and $w_2$ are simple reflections.
Let $r$ be an indecomposable (zero-weight) list of roots.
Note that it may contain multiple copies of the same root.
Then $r$ has the following properties:
 $|r| \ge 4$;  if $(i,j) \in r$ then $(-i,-j) \notin r$;
 if the sum of some three roots equals zero, then at least one of them is not 
in the list $r$. From these properties it follows that $r$ has only three different roots, and $r$ is primitive. For example, if $(2,1), (0,1) \in r$, then the third root can only be $(-1,-1)$. Since the sum of all roots in the set $r$ is zero, $r$ must contain at least 
two copies of the root $(-1,-1)$.  Since $r$ is indecomposable, we conclude that $r= \{(2,1), (0,1), (-1,-1), (-1,-1)\}$. Then $w_2  r  = \{(2,1), (0,-1), (-1,0), (-1,0)\}$ has only one positive root and $r$ is primitive.

The proof the second statement for the case $G_2$ is similar to the case $C_2$.
For sets of all primitive lists in all three cases a discussion is provided in Sections \ref{sec4}-\ref{sec6}.   
\end{proof}   


Define the following order on the set of roots $\Delta$ derived from  \eqref{eq:refer004}: for $\alpha_1,\alpha_2 \in \Delta$
$$
    \alpha_1 \leq \alpha_2 \quad \text{ if and only if } \quad e_{\alpha_1} \leq e_{\alpha_2}. 
\label{eq:refer005}
$$

A monomial $X$ is called \emph{perfect} if either
$X = h_\beta \in  G_0$,
or
$X = e_{\alpha_1}\cdot  e_{\alpha_2}\cdot \cdots \cdot e_{\alpha_n}$ and the associated list
$L_1(X)=({\alpha_1}, {\alpha_2}, \ldots, {\alpha_n})$  is both indecomposable and 
ordered according to \eqref{eq:refer005}, i.e., $\alpha_1 \leq \alpha_2 \leq \cdots \leq \alpha_n$.

Since the set of all indecomposable lists is finite, the set of all perfect monomials is also finite.
Let 
$I(\mathfrak{g})= \{ p_1, \ldots, p_q \}$ 
denote the set of all perfect monomials with degree greater than one.
Then $I(\mathfrak{g}) \cup G_0$ is a generating set of $U_0(\mathfrak{g})$ consisting of all perfect monomials.
Denote by $d_0$ the maximal degree of perfect monomials in $I(\mathfrak{g})$.

Let $J(\mathfrak{g}) = \{c_1,\ldots, c_q\}$ be the set of new indeterminates  and  $R=\{r_1 = c_1-p_1,\ldots,r_q = c_q-p_q\}$. Define
$$\hat U_0(\mathfrak{g}) = U_0(\mathfrak{g})[c_1,\ldots,c_q] / R.$$
Obviously, $\hat U_0(\mathfrak{g}) \cong U_0(\mathfrak{g})$. Hence, $\hat U_0(\mathfrak{g})$ is generated by 
 $J(\mathfrak{g}) \cup G_0$ with relations $K$ inherited from algebra $U_0(\mathfrak{g})$.




Denote by $Q_0(\mathfrak{g})$ the set of all monomials generated by $J(\mathfrak{g}) \cup G_0$ and 
define the function from $Q_0(\mathfrak{g})$ to $U_0(\mathfrak{g})$ as follows:
$$\omega: Q_0(\mathfrak{g}) \rightarrow  U_0(\mathfrak{g})\, , \quad   \omega(c_i) = p_i\, , \quad  \omega(h_i) = h_i, h_i \in G_0\, . $$

Let us fix some order on the set $J(\mathfrak{g}) \cup G_0$:
\begin{equation}
    x_1 \leq x_2 \leq \cdots \leq x_{q+k_0}\, , \quad  x_i \in J(\mathfrak{g})\cup G_0\, , \quad \mbox{for $i = 1,\ldots, q+k_0$.} 
\label{eq:refer006}
\end{equation}

For any monomial $Y \in Q_0(\mathfrak{g})$ denote by $T_1(Y)$, the set of all monomials $X\in Q_0(\mathfrak{g})$  such that $T_0(\omega(Y)) = T_0(\omega(X))$. Define a lexicographic order on the set $T_1(Y)$ with respect to the order \eqref{eq:refer006} as follows:
if $Y_1=x_{i_1}x_{i_2} \cdots x_{i_n}$,  $Y_2=x_{j_1}x_{j_2} \cdots x_{j_m}$ and $Y_1,Y_2 \in T_1(Y)$, then $Y_1 < Y_2$ if there exists an index $s \leq \min(n,m)$, such that
 $x_{i_t} = x_{j_t}$, for $t < s$ and $x_{i_s} < x_{j_s}$. 
A monomial $Y \in Q_0(\mathfrak{g})$ is called \emph{semi-perfect} if it is the minimal element of the set $T_1(Y)$ under this lexicographic order.  
Denote by $S(\mathfrak{g})$ the set of all semi-perfect monomials, and set
$S^{(i)}(\mathfrak{g}) = S(\mathfrak{g}) \cap U_0^{(i)}(\mathfrak{g})$. 

\begin{lemma}\label{lem:lemCart_25}
	\
\begin{itemize}
\item[1.] 
For any $X \in P_0$ there exists a unique semi-perfect monomial $Y \in Q_0(\mathfrak{g})$
such that $T_0(X) = T_1(Y)$.
\item[2.] 
Let $X=x_{1}x_{2}\cdots x_m$ with $x_i \in Q_0(\mathfrak{g})$,  be a  semi-perfect monomial. Then 
for any $s\in \{1, \ldots, m\}$ the monomial
$X'=x_{1}\cdots x_{s-1} x_{s+1}\cdots x_m$ is also semi-perfect.
\item[3.] There is a bijection $\psi: P_0 \rightarrow  S(\mathfrak{g})$: for every $X \in P_0$ 
there exists a unique $Y \in S(\mathfrak{g})$ such that $T_0(X) = T_1(\psi^{-1}(Y))$.
\item[4.] The set of semi-perfect monomials $S^{(i)}(\mathfrak{g})$
is a basis of the vector space $U_0^{(i)}(\mathfrak{g})$.
\end{itemize}
\end{lemma}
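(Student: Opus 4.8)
The strategy is to build everything on top of the bijection between $P_0$ and $S(\mathfrak{g})$ that is forced by the lexicographic-minimality definition of semi-perfect monomials, and then transport the basis statement of Lemma \ref{lem:lemCart_21}(2) through that bijection via an induction on the degree filtration. First I would establish item 1: given $X \in P_0$, the monomial $\omega^{-1}$ is not well-defined on $X$ directly, but $X$ is a standard monomial in the $e_\alpha$'s and $h_\beta$'s whose $L_1$-list is a zero-weight list, hence (by Lemma \ref{lem:lemCart_23}(1) and the decomposability discussion) the list $L_1(X)$ decomposes as a disjoint union of indecomposable lists; each indecomposable piece, after reordering by \eqref{eq:refer005}, is a perfect monomial $p_{i}$, so $X$ and a suitable product $c_{i_1}\cdots c_{i_r} h_{\beta_1}\cdots$ lie over the same $T_0$-class under $\omega$. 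Among all such preimages in $Q_0(\mathfrak{g})$ there is a unique lexicographically minimal one with respect to \eqref{eq:refer006}, which is by definition the semi-perfect monomial $Y$ with $T_0(X) = T_1(Y)$; uniqueness is immediate since a lex-minimal element of a nonempty set is unique. The one genuine subtlety here — and this is the main obstacle — is that the decomposition of $L_1(X)$ into indecomposables is not unique (the $A_2$ example with the two decompositions of $(\alpha_1,\dots,\alpha_6)$ shows this), so I must argue that different decompositions still yield preimages in the \emph{same} $T_1$-class, or at least that the lex-minimal preimage is insensitive to this choice; this follows because $T_1(Y)$ is defined purely through $T_0(\omega(Y))$, and $\omega$ sends any product of the $c_i$'s realizing a decomposition of $L_1(X)$ to a monomial in the same $T_0$-class as $X$, so all these preimages already belong to one $T_1$-orbit.

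Next, item 2 (semi-perfectness is preserved under deleting a letter) I would prove by contradiction: if $X = x_1\cdots x_m$ is semi-perfect but $X' = x_1\cdots x_{s-1}x_{s+1}\cdots x_m$ is not, then there is $Y' \in T_1(X')$ with $Y' < X'$ in the lex order of \eqref{eq:refer006}. Reinserting $x_s$ into $Y'$ at the appropriate position produces a monomial $Y \in T_1(X)$ (since $T_0(\omega(Y)) = T_0(\omega(X))$: multiplying both $\omega(X')$ and $\omega(Y')$ on the corresponding side by $\omega(x_s)$ preserves the $T_0$-class up to commuting the factor into place, which does not change $T_0$) with $Y < X$, contradicting semi-perfectness of $X$. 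The bookkeeping of where exactly $x_s$ gets reinserted is routine but needs the observation that the lex order is a total order compatible with appending equal factors in a fixed slot.

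For item 3 I would simply package items 1 and 2: item 1 gives a well-defined map $P_0 \to S(\mathfrak{g})$, $X \mapsto Y$; to see it is a bijection I exhibit the inverse. Given a semi-perfect $Y$, Lemma \ref{lem:lemCart_21}(5) applied to the monomial $\omega(Y) \in U_0(\mathfrak{g})$ shows the lex-minimal element of $T_0(\omega(Y))$ is a standard monomial $X \in P_0$, and $T_0(X) = T_1(\psi^{-1}(Y))$ holds by construction; the two maps are mutually inverse because both are pinned down by the same $T_0$/$T_1$ matching, each class on either side containing exactly one distinguished (minimal, resp.\ semi-perfect) representative. Finally, item 4 follows from item 3 together with Lemma \ref{lem:lemCart_21}(2) by an induction on $i$: the leading-term map sends $S^{(i)}(\mathfrak{g})$ bijectively onto a set whose images under $\omega$, modulo $U_0^{(i-1)}(\mathfrak{g})$, coincide with the leading terms of $\bar P_0^{(i)}$ (here Lemma \ref{lem:lemCart_21}(4) guarantees that rearranging factors only changes things in lower filtration degree, so the transition matrix between $S^{(i)}(\mathfrak{g})$ and $\bar P_0^{(i)}$ is unitriangular with respect to the degree filtration); since $\bar P_0^{(i)}$ is a basis of $U_0^{(i)}(\mathfrak{g})$, so is $S^{(i)}(\mathfrak{g})$. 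The hard part throughout is really item 1 — making peace with the non-uniqueness of indecomposable decompositions — and once that is cleanly stated, items 2–4 are formal consequences of minimality plus the PBW basis lemma.
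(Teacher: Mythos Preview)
Your proposal is correct and follows the same underlying strategy as the paper: set up the bijection $P_0 \leftrightarrow S(\mathfrak{g})$ via lex-minimality and then transport the PBW basis of Lemma~\ref{lem:lemCart_21}(2) through it using the filtration. The paper's own proof is extremely terse --- it simply asserts that the bijection ``follows from Lemma~\ref{lem:lemCart_21} and the definition of semi-perfect monomials'' and that standard monomials are linear combinations of semi-perfect ones --- so your write-up is really an expansion of the details the paper leaves implicit. In particular, your treatment of the non-uniqueness of indecomposable decompositions (resolved because $T_1$ is defined through $T_0\circ\omega$, so all decompositions land in one $T_1$-class) and your unitriangular argument for item~4 are exactly the content hidden behind the paper's one-line proof. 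The one place to tighten is item~2: the phrase ``compatible with appending equal factors in a fixed slot'' undersells what is needed; the clean statement is that if $Y' < X'$ are both sorted and you insert the same letter $x_s$ into each at its sorted position, the resulting sorted words satisfy $Y < X$, which one checks by a short case split on whether the first differing position of $Y',X'$ lies before or after the insertion point.
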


\begin{proof} The one-to-one  correspondence between standard monomials in $U_0^{(i)}(\mathfrak{g})$ and semi-perfect monomials in $\hat U_0^{(i)}(\mathfrak{g})$ follows from Lemma \ref{lem:lemCart_21} and the definition of semi-perfect monomials.
Taking into account that every standard monomial in $U_0^{(i)}(\mathfrak{g})$ can be expressed as a linear combination of 
semi-perfect monomials, the lemma follows.
\end{proof} 

Since the set of semi-perfect monomials $S(g)$ is a basis of the algebra $U_0(\mathfrak{g})$, then any element $x \in U_0(\mathfrak{g})$ can be written as a linear combination of semi-perfect elements. We need to determine how to multiply any two semi-perfect monomials and  express the product as a linear combination of elements of  $S(\mathfrak{g})$.

If an element $X \in U_0(\mathfrak{g})$  is written as
a linear combination of semi-perfect monomials, then
we say that $X$ is in a \emph{normal form}, denoted by $\mathcal{N}( X )$, and the process of conversion of a given element to a normal form we call \emph{normalization}.
Define now the set of relations 
\begin{equation*}
    K' = \{ c_{i_1}c_{i_2} \cdots c_{i_m} - \mathcal{N}( c_{i_1}c_{i_2} \cdots c_{i_m}) \,|\, m \le d_0, c_i \in J(\mathfrak{g}) \}
\label{eq:refer0251}
\end{equation*}
and the length function  on $K'$ as follows: 
${\rm Len}(c_{i_1}\cdots c_{i_m} - \mathcal{N}( c_{i_1} \cdots c_{i_m})) = m$.
Hence, $K'$ is the set of all relations with length less or equal to $d_0$. We are ready to state the main result about the Cartan centralizers of the universal enveloping algebras. 

\begin{theorem}\label{thm-main_26}
The ideal of relations $K$ is generated by the set of relations $K'$.  
\end{theorem}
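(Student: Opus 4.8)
The plan is to treat this as a normal-form (rewriting) statement. Let $\mathcal A$ be the free associative algebra on the generating set $J(\mathfrak g)\cup G_0$ and $\pi\colon\mathcal A\twoheadrightarrow U_0(\mathfrak g)$ the surjection extending $\omega$, so that $K=\ker\pi$; writing $(K')$ for the two-sided ideal generated by $K'$, we have $(K')\subseteq K$ and an induced surjection $\bar\pi\colon\mathcal A/(K')\twoheadrightarrow U_0(\mathfrak g)$, and the goal is to see that $\bar\pi$ is injective. Since the $h_\beta$ lie in $\mathfrak h\subseteq U_0(\mathfrak g)$ they are central and algebraically independent, so I would work over the polynomial base ring $\mathbb C[G_0]$ with generators $J(\mathfrak g)$ only; the relations $h_\beta h_{\beta'}=h_{\beta'}h_\beta$ and $h_\beta c_i=c_ih_\beta$ have length $2\le d_0$ and are thereby accounted for, and from now on every monomial is a word in the $c_i$. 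By Lemma~\ref{lem:lemCart_25}(4) the semi-perfect monomials $S(\mathfrak g)$ form a basis of $U_0(\mathfrak g)$, and $\bar\pi$ maps the image of $S(\mathfrak g)$ in $\mathcal A/(K')$ injectively onto that basis. So it is enough to show that $S(\mathfrak g)$ \emph{spans} $\mathcal A/(K')$: then a spanning set is mapped injectively onto a basis, which forces $\bar\pi$ to be an isomorphism and hence $(K')=K$. (As $J(\mathfrak g)$ is finite and $K'$ uses only relations of length $\le d_0$, the set $K'$ is finite, so this is precisely the finite presentability of $U_0(\mathfrak g)$.)

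Everything thus reduces to: every word $Y$ in the $c_i$ is congruent modulo $(K')$ to a linear combination of semi-perfect monomials. Each relation $c_{i_1}\cdots c_{i_m}-\mathcal N(c_{i_1}\cdots c_{i_m})\in K'$ with $m\le d_0$ licenses a \emph{reduction}: a factor $w=c_{i_1}\cdots c_{i_m}$ occurring inside $Y$ may be replaced by $\mathcal N(w)$. Three facts organize the rest. First, by Lemma~\ref{lem:lemCart_21}(3) and the PBW theorem $\operatorname{gr}U(\mathfrak g)$ is a polynomial algebra, so leading symbols multiply; hence $D(Y):=\deg(\omega(Y))$ is additive over the factors of $Y$, each factor contributes at least $2$, and there are only finitely many words in the $c_i$ of any fixed value of $D$. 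Second, with $D'=D(w)$ one has $\mathcal N(w)=Z_0+R$, where $R$ is a linear combination of semi-perfect monomials of degree $<D'$ and $Z_0$ is the unique semi-perfect monomial with $\operatorname{gr}(\omega(Z_0))=\operatorname{gr}(\omega(w))$; uniqueness holds because by Lemma~\ref{lem:lemCart_25}(1),(3) distinct semi-perfect monomials correspond to distinct standard monomials and so have distinct leading symbols. Equivalently $Z_0=\min_{\mathrm{lex}}T_1(w)$, and $Z_0=w$ exactly when $w$ was already semi-perfect. Third, iterating Lemma~\ref{lem:lemCart_25}(2) shows that any word obtained from a semi-perfect monomial by deleting letters --- in particular any of its factors --- is again semi-perfect, so no reduction alters a semi-perfect monomial.

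With these in hand I would argue by a double induction: on $D(Y)$, and for fixed $D$ on the rank of $Y$ in the lexicographic order of its (finite) $T_1$-class $T_1(Y)$. If $Y$ is semi-perfect there is nothing to do; otherwise --- the crux, see below --- $Y$ has a factor $w$ of length $\le d_0$ that is not semi-perfect, and writing $Y=UwV$ and reducing gives
\[
Y\;\equiv\;UZ_0V+\sum_k a_k\,UZ_kV \pmod{(K')},
\]
with $Z_0,Z_k\in S(\mathfrak g)$ and $D(UZ_kV)<D(Y)$, so the terms $UZ_kV$ are handled by the outer induction. For the remaining term, $\omega(Z_0)$ has the same multiset of root-vector letters as $\omega(w)$, whence $D(UZ_0V)=D(Y)$ and $T_1(UZ_0V)=T_1(Y)$; and $Z_0<_{\mathrm{lex}}w$ strictly, while prepending $U$ and appending $V$ preserves a strict lexicographic inequality, so $UZ_0V<_{\mathrm{lex}}Y$ inside $T_1(Y)$, which is handled by the inner induction. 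Iterating reduces $Y$ to a combination of semi-perfect monomials, proving the spanning statement and hence the theorem.

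The hard part will be the crux: \emph{a word $Y=c_{i_1}\cdots c_{i_n}$ that is not semi-perfect has a factor of length $\le d_0$ that is not semi-perfect.} Such a $Y$ records a partition of the multiset of roots of the list $L_1(\omega(Y))$ into indecomposable zero-weight blocks $L_1(p_{i_j})$, each internally ordered as in the definition of a perfect monomial, together with a left-to-right arrangement of these blocks; $Y$ is semi-perfect precisely when this datum is lexicographically least, which by Lemma~\ref{lem:lemCart_21}(5) is controlled by the standard monomial $\min T_0(\omega(Y))$. If it is not least, one must show it admits an improvement confined to a window of consecutive blocks of total length $\le d_0$: either two adjacent blocks sit in the wrong order (a transposition of $\le d_0$ letters, since indecomposable lists have length $\le d_0$ by Lemma~\ref{lem:lemCart_23}(1)), or a short run of adjacent blocks can be repartitioned into a lexicographically smaller family of indecomposable zero-weight blocks. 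The point is that $d_0$ --- the maximal degree of a perfect monomial, equivalently the maximal length of an indecomposable zero-weight list --- is exactly what makes length-$\le d_0$ reductions suffice; verifying that the improving window never needs to be longer than $d_0$ (rather than some larger multiple of it) is the delicate step. For $\mathfrak g\in\{A_2,C_2,G_2\}$ it can be checked directly against the explicit lists of indecomposable lists and perfect monomials produced in Sections~\ref{sec4}--\ref{sec6}.
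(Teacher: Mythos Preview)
Your overall framework coincides with the paper's: both reduce the theorem to showing that every monomial in the $c_i$ can be brought, modulo $(K')$, to a $\mathbb C[G_0]$-linear combination of semi-perfect monomials, and both proceed by induction on the filtration degree. The divergence is in the reduction step, and there your ``crux'' is a genuine gap.

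You need: if $Y=c_{i_1}\cdots c_{i_n}$ is not semi-perfect, then some \emph{contiguous} factor of length $\le d_0$ is not semi-perfect. You do not prove this; you only assert it is checkable case by case for rank~$2$. In general it is not clear why it should hold. If $Y$ is already sorted, the first letter $y_1$ of its semi-perfect form satisfies $L_1(y_1)\subset L_1(\omega(Y))$, but the roots making up $L_1(y_1)$ may be distributed among $c_{i_k}$'s that are far apart in $Y$; there is no a~priori reason a window of $\le d_0$ consecutive letters must witness the defect. Since Theorem~\ref{thm-main_26} is stated for arbitrary simple $\mathfrak g$, this is a real hole, not just an expository one.

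The paper's proof sidesteps the issue completely by \emph{not} insisting on a contiguous window. Given $X=x_1\cdots x_m$ with target semi-perfect form $Y=y_1Y'$, it selects a \emph{minimal subset} $\{z_1,\ldots,z_t\}\subset\{x_1,\ldots,x_m\}$ with $L_1(y_1)\sqsubset L_1(z_1\cdots z_t)$; minimality forces each $z_j$ to contribute at least one root to $L_1(y_1)$, so $t\le|L_1(y_1)|\le d_0$. Now the length-$2$ relations in $K'$ give $c_ac_b\equiv c_bc_a+(\text{lower degree})$ modulo $(K')$, and these are used to move the (possibly scattered) $z_j$ to the front: $X\equiv ZX_1+X_2$ with $Z=z_1\cdots z_t$ and $X_2$ of strictly lower degree. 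Only then is a single length-$t\le d_0$ relation from $K'$ applied to normalize $Z$; its semi-perfect form begins with $y_1$, since $y_1$ is the minimal perfect monomial whose root-list embeds in $L_1(X)\supset L_1(Z)$. One obtains $X\equiv y_1X_1'+X_2'$ with $X_2'$ of lower degree and $T_1(X_1')=T_1(Y')$, and induction finishes.

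In short, the idea you are missing is: rather than hunting for a short contiguous bad window, use the length-$2$ commutation relations (always in $K'$) to \emph{manufacture} one by gathering the relevant $c$'s to the front. This is precisely what makes the argument uniform over all simple $\mathfrak g$ and removes the need for your unproven combinatorial claim.
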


\begin{proof}
First, we prove that the product of perfect elements can be expressed as 
a linear combination of semi-perfect elements using only the relations $K'$.
The proof follows from the reduction algorithm described below.

Let $X=x_{1}x_{2}\cdots x_{m}\in U_0^{(i)}(\mathfrak{g})$, where 
$x_{1},x_{2},\ldots, x_{m} \in J(\mathfrak{g})\cup G_0$ are perfect monomials.
We will show that using the relations $K'$, $X$ can be transformed into the normal form. 
The proof will proceed by induction on the degree of the monomials.
For $i=1$, the statement is obvious.
Assume that the statement holds for $X\in U_0^{(i-1)}(\mathfrak{g})$, that is any such element can be transformed into the normal form.
 By  Lemma \ref{lem:lemCart_25}, there exists a semi-perfect element 
$Y=y_{1}y_{2}\ldots y_{n} \in Q_0^{(i)}$, $y_{s} \in J(\mathfrak{g})\cup G_0$,
such that
$T_1(X) = T_1(Y)$.
There are two cases to consider. 
 
\begin{itemize}
\item[1.] Let $y_{1} \in G_0$. Since $L_0(X)=L_0(Y)$
by Lemma \ref{lem:lemCart_25}, then  
$y_{1}=x_{s}$,  for some $1 \le s \le m$.
As $y_{1}$ belongs to the center $U_0(\mathfrak{g})$, 
we can rewrite $X$ as 
$$X=y_{1}x_{1}x_{2}\cdots x_{s-1}x_{s+1}\cdots x_{m}\, .$$
Thus, 
$X= y_{1} \cdot X'$ and
$Y= y_{1} \cdot Y'$ where 
$T_1(X') = T_1(Y')$ and 
$X',Y' \in U_0^{(i-1)}$. 
Additionally, $Y'$ is a semi-perfect monomial. Now the statement follows 
by the induction hypothesis.
Note that we did not use any relations from the set $K'$.

\item[2.] Let $y_{1} \in J_0(\mathfrak{g})$.
It follows from the equality $L_1(X) = L_1( Y )$ that $L_1(y_{1})$ is a sublist of $L_1( X )$. 
Let $\{ z_1, z_2, \ldots, z_t\} \subset \{x_{1},x_{2},\ldots, x_{m}\}$ be a minimal set of perfect monomials such that 
$L_1(y_{1}) \sqsubset L_1(z_1 z_2 \cdots z_t )$.
Set $Z=z_1 z_2 \cdots z_t$. 
Since the cardinality of the list $L_1(y_{1})$ is bounded by 
$d_0$, we have 
$t \le d_0$.
Using the commutation relations for two perfect elements we can transform $X$ into the following form:
$$X=Z X_1 + X_2,$$ where 
$X_2 \in  U_0^{(i-1)}(\mathfrak{g})$ is a linear combination of monomials and $T_1(X)=T_1(Z X_1)$.
Let $Z_1$ be the minimal (semi-perfect) element in the set $T_1(Z)$. The first perfect element in $Z_1$ is $y_1$ and $Z_1 = y_1 Z_1'$. 
Now, applying the normalization process to the element  $Z - Z_1$ we get
 $Z= Z_1 + \mathcal{N}(Z - Z_1)$. Substituting it back into $X$, we get 
$$X=Z X_1 + X_2 = (y_1 Z_1' + \mathcal{N}(Z - Z_1)) X_1 + X_2 = y_1 X_1' + X_2',$$
where $T_1(X) = T_1(y_1 X_1')$ and $X_2'$ is a polynomial in $U_0^{(i-1)}(\mathfrak{g})$.
 We have 
$X= y_{1} \cdot X_1' + X_2'$, 
$Y= y_{1} \cdot Y'$ and 
$T_1(X') = T_1(Y')$, where 
$X',Y',X_2' \in U_0^{(i-1)}(\mathfrak{g})$ and
$Y'$ is semi-perfect monomial. The statement follows by
 the induction hypothesis.
\end{itemize}
\end{proof}

Let $A$ be an associative algebra defined by a set of generators $S$ and a set of relation $R$ (as polynomials in the generators of $S$).

The set of generators $S$ can be partitioned into three subsets, $S=S_1 \cup S_2 \cup S_3$, where
 $S_1$ generates the center of the algebra $A$ and $S_1 \cup S_2$ generates a commutative subalgebra of $A$.

The width of a monomial $X$, denoted by ${\rm width }(X)$, is defined as the number of occurrences of variables from the set $S_3$ in the monomial. The width of a polynomial $P$ is then defined as the maximal width among all monomials that make up the polynomial.

We can decompose the set of relation $R$ into a union of subsets as follows:
$R = R_0 \cup R_1 \cup \cdots $, where $R_i = \{ Y \in R\, |\, {\rm width }\, (Y) = i\}$, for $i=0,1,2, \ldots$.

Note that this decomposition is not unique and may vary depending on the choice of  generators $S$ and the subset $S_2$.
Our goal will be to identify the ``best" set of generators $S$ and ``best" decomposition, 
such that the cardinality of the set $S_3$ is minimal.    


\section{Category of $\Gamma$-pointed modules}

Let 
$\Gamma$ be a commutative subalgebra of 
$U_0(\mathfrak{g})$ such that
$\mathfrak{h} \subseteq \Gamma \subset U_0(\mathfrak{g})$. 
By ${\rm Hom}(\Gamma,\mathbb{C})$ we denote the set of all characters of $\Gamma$, i.e., the set of all $\mathbb{C}$-algebra homomorphisms from $\Gamma$ to $\mathbb{C}$.

Let $M$ be a $\Gamma$-module. For each $\chi \in {\rm Hom}(\Gamma,\mathbb{C})$  we set
\begin{equation*}
  M_\chi = \{v \in M\, |\,  av =\chi(a)v\, , \mbox{ for all } a \in \Gamma\},
\label{eq:refer008}
\end{equation*}
and call it the $\Gamma$-weight space of $M$ with weight $\chi$. When $M_\chi \neq\{0\}$, we say that $\chi$ is a $\Gamma$-weight of $M$ and the elements of $M_\chi$ are called $\Gamma$-weight vectors of weight $\chi$. If a $\Gamma$-module $M$ satisfies
\begin{equation*}
  M = \bigoplus_{\chi \in {\rm Hom}(\Gamma,\mathbb{C})} M_\chi,
\label{eq:refer009}
\end{equation*}
then we call $M$ a $\Gamma$-weight module. 
The dimension of the vector space 
$M_\chi\neq 0$ will be called the \emph{$\Gamma$-multiplicity} of $\chi$ in $M$. A module is called \emph{$\Gamma$-pointed} if the {$\Gamma$-multiplicity} of any character $\chi$  equals $1$, that is, $\Gamma$ separates the basis elements of $M$. In particular, $M$ is a \emph{tame} module with diagonalizable action of $\Gamma$. 
A weight module  $M$ is \emph{torsion free} provided  all root vectors of $\mathfrak{g}$ act injectively on  $M$. In particular, if $\Gamma =U(\mathfrak{h})$ then $\Gamma$-weight module is a classical weight module.

Suppose $\mathfrak{g}$ is of type $A$ and $\Gamma$ is a Gelfand-Tsetlin subalgebra of $\mathfrak{g}$. Then  $ \Gamma \subset U_0(\mathfrak{g})$ and every generic Gelfand-Tsetlin $\mathfrak{g}$-module is $\Gamma$-pointed. We refer to \cite{DFO1984} for details. Clearly, every finite dimensional $\mathfrak{g}$-module is also $\Gamma$-pointed. A family of simple $\Gamma$-pointed modules in type $A$ was studied in \cite{FRZ2019}. On the other hand, there exist Gelfand-Tsetlin modules which are not pointed.

\section{Construction of simple weight $A_2$-modules} \label{sec4}
In this section we consider the Lie algebra $\mathfrak{g}=\mathfrak{sl}(3)$. Even though this case is well understood we give some details to illustrate our approach.

\subsection{Centralizer of the Cartan subalgebra of $A_2$}
\label{sec:A2torsionfree}

Let $\Delta  =  \{ \alpha_1,\alpha_2,\alpha_3=\alpha_1+\alpha_2,  \alpha_4=-\alpha_1-\alpha_2, \alpha_5=-\alpha_2, \alpha_6=-\alpha_1\}    $
be the root system of $\mathfrak{g}$.
Fix a Chevalley basis $G$: $e_{10} =  E_{12},       f_{10} =  E_{21}, $
 $e_{01} =  E_{23}$,            $ f_{01} =  E_{32}$,              
$e_{11} =  E_{13}$,             $f_{11} = E_{31}$,  
$h_{10} =  E_{11} - E_{22}$,   $h_{01} =  E_{22} - E_{33}$.  

Let us define the following order on the elements of $G$:
\begin{equation*}
h_{01} < h_{10} < f_{01} < f_{10} < f_{11} < e_{11} < e_{10} < e_{01}.  
\label{eq:REF100}
\end{equation*}

We have the following set of indecomposable lists of roots:
$
\{\alpha_1, \alpha_6\}$, 
$\{\alpha_2, \alpha_5\}$,
$ \{\alpha_3, \alpha_4\}$,
$ \{\alpha_1, \alpha_2, \alpha_4\}$, 
$ \{\alpha_3, \alpha_5, \alpha_6\}$,                       
and the following set of perfect monomials:
 $$
 h_1 =  h_{01}, \,                   
 h_2 =  h_{10},  \,                  
 c_1 =  f_{01} e_{01}, \,       
 c_2 =  f_{10} e_{10},  \,       
 c_3 =  f_{11} e_{11},   \,    
 c_4 =  f_{11} e_{10} e_{01},  \,
 c_5 =  f_{01} f_{10} e_{11}.                         
$$

Define the order on the set of perfect monomials: $$
 h_1  <   h_2 <  c_1  <  c_2  <  c_3  <  c_4  <  c_5\, .$$                  

Note that any monomial containing both variables $c_4$ and $c_5$ is not semi-perfect 
since
$m' = c_1 c_2 c_3$ has the same associated list of roots as 
$m = c_4 c_5$, but 
$c_1 < c_4$. We easily have from the relations between the generators the next proposition.

\begin{proposition}\label{thm_41}
	\
\begin{itemize}
\item The following set of monomials is a basis of 
$\hat U_0(\mathfrak{g})$: 
$$
P = \{ h_1^{s_1} h_2^{s_2} c_1^{s_3}  c_2^{s_4}  c_3^{s_5}  c_4^{s_6}\, | \, s_1, \ldots, s_6 \in \mathbb{N} \} \cup
    \{ h_1^{s_1} h_2^{s_2} c_1^{s_3}  c_2^{s_4}  c_3^{s_5}  c_5^{s_6} \, | \, s_1, \ldots, s_6 \in \mathbb{N} \}\, .   
$$
\item
The set  $\tilde{K}=\{ X\,  | \, X \in K',\,  {\rm Len}(X) = 2 \}$
   is a generating set of the ideal of relations $K$.
\end{itemize}
\end{proposition}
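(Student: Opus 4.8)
The plan is to first determine the semi-perfect monomials of $\hat U_0(\mathfrak g)$ explicitly, and then to run the reduction algorithm from the proof of Theorem~\ref{thm-main_26} while tracking the length of the relations it uses. For the first bullet, recall from Lemma~\ref{lem:lemCart_25}(4) that the semi-perfect monomials already form a basis of $U_0(\mathfrak g)\cong\hat U_0(\mathfrak g)$, so it suffices to prove that this set equals $P$. Since $h_1,h_2$ are central and smallest in the fixed order, and since reordering the generators of a monomial $Y$ keeps $\omega(Y)$ within the same multiset of Chevalley letters and hence does not leave the class $T_1(Y)$, every semi-perfect monomial has the sorted shape $h_1^{s_1}h_2^{s_2}c_1^{a_1}c_2^{a_2}c_3^{a_3}c_4^{a_4}c_5^{a_5}$. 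For a monomial of this shape, $T_1$ is controlled by the multiset of letters $e_\alpha,f_\alpha$ in $\omega(\cdot)$, and solving the corresponding linear system in the exponents (using $\omega(c_1)=\{f_{01},e_{01}\}$, $\omega(c_4)=\{f_{11},e_{10},e_{01}\}$, and so on) shows that the only coincidence is that $L_1(c_1c_2c_3)$ and $L_1(c_4c_5)$ are reorderings of one another: the monomials in the class are precisely those obtained from $(a_1,\dots,a_5)$ by iterating the replacement $c_1c_2c_3\mapsto c_4c_5$, that is, $(a_1,\dots,a_5)\mapsto(a_1-t,a_2-t,a_3-t,a_4+t,a_5+t)$ for $0\le t\le\min(a_1,a_2,a_3)$, together with the analogous family when one starts from $a_4=0$. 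Comparing the sorted representatives lexicographically, the first generator exceeding $c_1$ moves to the left as $t$ grows, so the minimum of each class is the unique representative with $a_4a_5=0$; conversely every sorted monomial with $a_4a_5=0$ is the minimum of its class. Hence the semi-perfect monomials are exactly the elements of $P$, and they form a basis by Lemma~\ref{lem:lemCart_25}(4).

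For the second bullet, by Theorem~\ref{thm-main_26} the ideal $K$ is generated by $K'$, the relations of length at most $d_0$, and here $d_0=3$ because the only perfect monomials of degree larger than $2$ are $c_4$ and $c_5$ (of degree $3$). It therefore suffices to show that each length-$3$ relation $c_ic_jc_k-\mathcal N(c_ic_jc_k)$ lies in the two-sided ideal $\langle\tilde K\rangle$, and for this I would prove the stronger claim that any word in $c_1,\dots,c_5$ (after pulling the central $h_1,h_2$ to the front) can be brought to its normal form using only relations from $\tilde K$. This is done by induction on the triple $(\deg X,\ \nu(X),\ \mathrm{inv}(X))$ ordered lexicographically, where $\nu(X)$ is the number of $c_4$'s times the number of $c_5$'s in $X$, and $\mathrm{inv}(X)$ is the number of out-of-order pairs of generators. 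If $\mathrm{inv}(X)>0$, pick an adjacent inverted pair and apply the corresponding length-$2$ relation: if the pair is not $c_5c_4$ it is transposed, dropping $\mathrm{inv}$ by one and leaving a strictly-lower-degree remainder that the degree induction absorbs; if the pair is $c_5c_4$ then $\mathcal N(c_5c_4)=c_1c_2c_3+(\text{lower degree})$, which strictly lowers $\nu$. If $\mathrm{inv}(X)=0$ but $\nu(X)>0$, then $X$ is sorted and contains an adjacent $c_4c_5$, and $\mathcal N(c_4c_5)=c_1c_2c_3+(\text{lower degree})$ again strictly lowers $\nu$. If $\mathrm{inv}(X)=\nu(X)=0$ then $X$ is already semi-perfect by the first bullet. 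Each step replaces a subword $w$ inside $AwB$ by $\mathcal N(w)$, and $AwB-A\,\mathcal N(w)\,B\in\langle\tilde K\rangle$, so the terminal normal form differs from $c_ic_jc_k$ by an element of $\langle\tilde K\rangle$; by uniqueness of normal forms this element is exactly $c_ic_jc_k-\mathcal N(c_ic_jc_k)$. Thus $K'\subseteq\langle\tilde K\rangle$ and therefore $K=\langle\tilde K\rangle$.

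The bulk of the work, and the only real subtlety, is in the first bullet: establishing that the single nontrivial identification among the root-lists of $c$-monomials is $c_1c_2c_3\leftrightarrow c_4c_5$, and that $a_4a_5=0$ is precisely the condition characterizing the lexicographically minimal element of each $T_1$-class. Once that description is in hand, the termination measure $(\deg,\nu,\mathrm{inv})$ and the reduction steps in the second bullet are routine to verify.
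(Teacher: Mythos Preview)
Your proof is correct and takes essentially the same approach as the paper, which offers no detailed argument beyond the remark that the proposition ``easily follows from the relations between the generators'' together with the key observation (stated just before the proposition) that $L_1(c_4c_5)=L_1(c_1c_2c_3)$ with $c_1<c_4$. You have simply made explicit the linear-algebra computation on Chevalley multiplicities that pins down the semi-perfect monomials and supplied a clean termination measure $(\deg,\nu,\mathrm{inv})$ for the length-$2$ reduction, both of which the paper leaves to the reader.
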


 The following $\tilde{K}$ is  the list of all  relations of length two:
\begin{eqnarray}
\label{eq:refer017a}
c_j h_i    & = & h_i c_j\, , \mbox{ for } i=1,2, j=1, \ldots, 5  \\
c_{2} c_{1} & = & -c_{5} + c_{4} + c_{1} c_{2}, \label{eq:refer017b}\\ 
c_{3} c_{1} & = &  c_{5} - c_{4} + c_{1} c_{3}, \\      
c_{4} c_{1} & = & -2 c_{5} +  (2 + h_{1}) c_{4} + c_{1}  c_{4} - c_{1}  c_{3} + c_{1} c_{2}, \\
c_{5} c_{1} & = & -h_{1}c_{5} + c_{1}  c_{5} + c_{1}  c_{3}  - c_{1}  c_{2}, \label{eq:refer017x} \\
c_{3} c_{2} & = & -c_{5} + c_{4} + c_{2}  c_{3}, \\
c_{4} c_{2} & = &  h_{2} c_{3}  +  h_{2} c_{4} + c_{2}  c_{4}     + c_{2}  c_{3} - c_{1}  c_{2}, \\
c_{5} c_{2} & = & -h_{2}c_{3}  - h_{2} c_{5} + c_{2}c_{5} - c_{2}c_{3} + c_{1}c_{2}, \label{eq:refer017y}\\  
c_{4} c_{3} & = & 2 c_{5}  - (h_{2} + h_{1} + 2)c_{4}    
                + c_{3}  c_{4} - h_{2}c_{3}  - c_{2}c_{3}  + c_{1}  c_{3}, \\
c_{5} c_{3} & = & (h_{2} + h_{1})c_{5} + c_{3}  c_{5}       + h_{2}c_{3}  + c_{2}  c_{3} - c_{1}  c_{3}, \\
c_{5} c_{4} & = & -(2 h_{2} + h_{1})c_{5} - c_{3}  c_{5}      
                - 2h_{2} c_{3}  + c_{2}  c_{5} - 2 c_{2}  c_{3} - c_{1}  c_{5}    
                + c_{1}  c_{2}  c_{3}  \nonumber \\
                && +   ( h_{2} + h_{1} + 2)c_{1} c_{2},           \label{eq:refer017z}\\
c_{4} c_{5} & = & -h_{1}c_{5}  - c_{3}  c_{5} +  h_{1} h_{2}c_{3}    + c_{2}  c_{5} 
               + h_{1} c_{2}  c_{3}  - c_{1}  c_{5}    +  h_{2} c_{1}  c_{3} + c_{1}  c_{2}  c_{3}.        
\label{eq:refer017}
\end{eqnarray}

The following Casimir elements generate the center of the universal enveloping algebra \cite{F1989}: 
\begin{eqnarray}
z_{1} & = &
c_{3} +c_{2}+c_{1}
+ {\frac{1}{3}} ( h_2^2
+ 3h_2
+ h_1^2
+ 3h_1
+ h_2 h_1)    \label{eq:refer017c}  \\
z_{2} & = &
c_{5}+c_{4}
+ {\frac{1}{3}}   (h_1-h_2)          c_{3}
- {\frac{1}{3}}   (6 +2 h_1 + h_2)   c_{2}
+ {\frac{1}{3}}   (h_1+2 h_2)        c_{1}  \nonumber \\
   && + {\frac{1}{27}} (-h_2-3+h_1) (6+2h_1+h_2) (h_1+2 h_2)
\label{eq:refer018}
\end{eqnarray}







Using \eqref{eq:refer017a}-\eqref{eq:refer018}, we can
reduce the number of generators of $U_0(\mathfrak{g})$ since by applying  \eqref{eq:refer017b},   \eqref{eq:refer017c}, and  
\eqref{eq:refer018} we can exclude  $c_3, c_4, c_5$ from all other relations.
As a result we will get a generating set $S = \{ h_1, h_2, z_1, z_2, c_1, c_2 \}$ of $U_0(\mathfrak{g})$ 
with the following decomposition:
  $S_1 = \{h_1, h_2, z_1, z_2\}$,  $S_2 = \{c_1\}$, $S_3 = \{c_2\}$.
  For the set of relations we have
   $R = R_1 \cup R_2$, where $R_1$ consists of one relation obtained from 
    \eqref{eq:refer017x}, and 
    $R_2$ consists of two relations obtained from 
     \eqref{eq:refer017y} and   \eqref{eq:refer017z}.


\subsection{Generic torsion free $A_2$-modules}
\label{sec:A2TorFreeSec}

Let $\Gamma$ be the commutative subalgebra of $U_0(\mathfrak{g})$ generated  by the elements
$h_1$, $h_2$, $z_1$, $z_2$, $c_1$. This is a Gelfand-Tsetlin subalgebra of $\mathfrak{sl}(3)$.
We give a construction of a family of $\Gamma$-pointed modules $V(a_1, a_2, a_3, \xi, \mu)$ 
which depends on five complex parameters with the restriction $a_3\notin \mathbb Z$. 
These are essentially the universal generic  Gelfand-Tsetlin modules initially constructed 
in \cite{DFO1984} and \cite{FGR2021} using a Gelfand-Tsetlin basis \cite{GT1950}. 

Let us fix arbitrary $a_1, a_2, a_3, \xi, \mu\in \mathbb C$, such that $a_3\notin \mathbb Z$. 
  To simplify the formulas we define the following set of indexed variables: 
\begin{equation}
\begin{aligned}   
 h_{ij}^{(1)}  &= a_1 + 2i -  j,   \,\,
 h_{ij}^{(2)}   = a_2 -  i + 2j,    \,\,
 s_{j,k}        = a_3 -  j + 2k - 1,     \\
 S^{+}_{ijk}   &= \frac{1}{2}( s_{j,k} + h_{i,j}^{(1)} ) \label{eq:REF109}
                = \frac{1}{2}( a_1  + a_3 -1 )     + i-j+k,           \\
 S^{-}_{ik}    &= \frac{1}{2}( s_{0,k} -  h_{i,0}^{(1)})                 \,
                = \frac{1}{2}(-a_1  + a_3 -1 ) - i+k,                 \\
 T^{+}_{k}     &= \frac{1}{2}( s_{0,k} + \frac{1}{3} ( h_{0,0}^{(1)}+ 2 h_{0,0}^{(2)}))  \, 
                = \frac{1}{6}( a_1+2a_2+3a_3) + k -  \frac{1}{2},                     \\
 T^{-}_{jk}    &= \frac{1}{2}( s_{j,k} - \frac{1}{3} ( h_{0,j}^{(1)}+ 2 h_{0,j}^{(2)}))  \,
                = \frac{1}{6}(-a_1-2a_2+3a_3) -j + k - \frac{1}{2},                   \\
Q^{-}_{jk}     &= -\mu  +   T^{-}_{j,k-2} \xi -  T^{-}_{j,k} T^{-}_{j,k-1} T^{-}_{j,k-2}
                = -(T^{-}_{j, k-1} - t_1 )(T^{-}_{j, k-1}  -  t_2 ) (T^{-}_{j, k-1} - t_3),  \\
 Q^{+}_{k}     &=  \mu  +   T^{+}_{k}     \xi -  T^{+}_{k}   T^{+}_{k-1}   T^{+}_{k-2}
                = (T^{+}_{k-1} + t_1 )(T^{+}_{k-1}  +  t_2 ) (T^{+}_{k-1} + t_3),  \\
 & \text{where}\,\,  t_1, t_2, t_3 \,\, \text{are three roots of the equation}\,\, t^3 -t(\xi+1) +  \mu + \xi = 0\, .
\end{aligned}
\end{equation}

Next, we define an action of the Lie algebra $\mathfrak{g}$ on the vector space  
$V(a_1, a_2, a_3, \xi, \mu)= \{{\bf v}_{ijk}\, |\, i,j,k \in \mathbb Z\}$ as follows:
\begin{equation}
\begin{aligned} \label{eq:REF_A2418}
z_{1}({\bf v}_{ijk})    &= \xi {\bf v}_{ijk},               \,\,
z_{2}({\bf v}_{ijk})     = \mu {\bf v}_{ijk},              \,\,
h_{01}({\bf v}_{ijk})    = h_{ij}^{(1)}   {\bf v}_{ijk},    \,\,
h_{10}({\bf v}_{ijk})    = h_{ij}^{(2)}   {\bf v}_{ijk},    \\
e_{01}({\bf v}_{ijk})   &= S^{+}_{ijk}    {\bf v}_{i+1,j,k}, \,\,       
f_{01}({\bf v}_{ijk})    = S^{-}_{ik}     {\bf v}_{i-1,j,k}, \\                     
e_{10}({\bf v}_{ijk})   &= Q^{-}_{jk}     {\bf v}_{i,j+1,k}   
                    + {\frac{S^{-}_{i,k}}{ s_{j+1,k}   s_{jk} }}  {\bf v}_{i,j+1,k+1}, \\
f_{10}({\bf v}_{ijk})   &= Q^{+}_{k}      {\bf v}_{i,j-1,k-1} 
                    + {\frac{S^{+}_{i,j,k}}{ s_{j+1,k} s_{jk} }}  {\bf v}_{i,j-1,k}. 
\end{aligned}
\end{equation}

We have the following statement (cf. \cite{F1989,FGR2021}).

\medskip

\begin{theorem}\label{prp:prp1} Let $a_1, a_2, a_3, \xi, \mu\in \mathbb C$ and $a_3\notin \mathbb Z$. 
Then
\begin{itemize}
\item[1.]  The space   $V(a_1, a_2, a_3, \xi, \mu)$  is a torsion free 
    $\Gamma$-pointed $\mathfrak{g}$-module if and only if  $S^{+}_{ijk}$ and $S^{-}_{ik}$ are nonzero,  for all $i,j,k  \in \mathbb Z$. 
\item[2.]   The module $V(a_1, a_2, a_3, \xi, \mu)$ is simple if and only if
      $S^{+}_{ijk}$, $S^{-}_{ik}$, $Q^{-}_{jk}$, $Q^{+}_{k}$ are nonzero,  for all $i,j,k  \in \mathbb Z$. 
\end{itemize}
\end{theorem}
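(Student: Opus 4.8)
The plan is to verify directly that the formulas \eqref{eq:REF_A2418} define a $\mathfrak{g}$-action, and then to analyze injectivity and simplicity by tracking the scalar coefficients. Since $\mathfrak{g}=\mathfrak{sl}(3)$ is generated by $e_{01},f_{01},e_{10},f_{10}$ together with the Cartan, the first task is to check that the assignments respect the Chevalley relations: the Serre relations, the commutators $[e_{10},f_{01}]=[e_{01},f_{10}]=0$, the relation $[e_{01},f_{01}]=h_{01}$ (and its $e_{10},f_{10}$ analogue), and $[e_{11},e_{10}]=0=[e_{11},e_{01}]$ after identifying $e_{11}=[e_{01},e_{10}]$ up to sign. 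Here the expressions in \eqref{eq:REF109} were manufactured precisely so that $c_1=f_{01}e_{01}$ acts as a scalar depending only on the Gelfand-Tsetlin pattern and the Casimirs $z_1,z_2$ act as the scalars $\xi,\mu$; so I would first record that $h_{01},h_{10}$ act with the stated eigenvalues, that $c_1$ acts by $S^-_{i+1,k}S^+_{ijk}$ evaluated appropriately (a rational function of $a_1,a_2,a_3$ via \eqref{eq:REF109}), and that the quoted Casimir formulas \eqref{eq:refer017c}, \eqref{eq:refer018} together with relations \eqref{eq:refer017b}–\eqref{eq:refer017} force the values of $c_3,c_4,c_5$ on each ${\bf v}_{ijk}$. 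Then checking the Lie algebra relations reduces to a finite list of polynomial identities in the indexed variables, which one verifies by the standard telescoping computations (this is exactly the content of "our approach is a suitable modification of \cite{BL1982a} and \cite{F1989}", so I would invoke those references for the bulk of the bookkeeping rather than reproducing it).

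**Part 1.** Granting that the formulas define a $\mathfrak{g}$-module whenever they make sense, the module is a well-defined vector-space action on $V=\operatorname{span}\{{\bf v}_{ijk}\}$ regardless of vanishing, because no denominators $s_{jk}$ appear that could vanish: indeed $s_{j,k}=a_3-j+2k-1$ is never zero since $a_3\notin\mathbb Z$, so all coefficients in \eqref{eq:REF_A2418} are genuine complex numbers. Thus the only obstruction to torsion-freeness is injectivity of the six root vectors. The operators $e_{01},f_{01}$ act by a single nonzero-or-zero scalar times a basis shift, so each is injective iff all $S^+_{ijk}$ (resp. all $S^-_{ik}$) are nonzero — giving exactly the stated condition. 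For $e_{10},f_{10}$ the matrices are "two-term" shifts; one checks that in each fixed $(i,k')$-slice the operator $e_{10}$ is represented by a bidiagonal (in $j$) matrix whose diagonal entries are $Q^-_{jk}$ and whose off-diagonal entries are $S^-_{ik}/(s_{j+1,k}s_{jk})$, and a bidiagonal $\mathbb Z$-indexed operator with nowhere-vanishing off-diagonal is automatically injective. So once $S^\pm$ are nonzero, $e_{10}$ and $f_{10}$ are injective with no further condition, and conversely if some $S^+_{ijk}=0$ then $e_{01}$ kills ${\bf v}_{ijk}$. This yields the "if and only if" of part 1; I would also double-check that the $\Gamma$-weights $(h^{(1)}_{ij},h^{(2)}_{ij},\xi,\mu,c_1\text{-value})$ are pairwise distinct across $(i,j,k)$ — this uses $a_3\notin\mathbb Z$ to separate the $k$-index via the $c_1$-eigenvalue — which is what makes the module $\Gamma$-pointed.

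**Part 2.** For simplicity, one argues that a nonzero submodule $N$ must contain some basis vector ${\bf v}_{ijk}$ (because $\Gamma$ acts semisimply with distinct weights, so $N$ is spanned by the ${\bf v}_{ijk}$ it contains), and then that the four root operators, when all of $S^+,S^-,Q^-,Q^+$ are nonzero, let one move from any ${\bf v}_{ijk}$ to any ${\bf v}_{i'j'k'}$. Stepping in $i$ uses $e_{01},f_{01}$ (coefficients $S^\pm$). Stepping in $k$: from ${\bf v}_{ijk}$ apply $e_{10}$ to get $Q^-_{jk}{\bf v}_{i,j+1,k}+(\text{nonzero}){\bf v}_{i,j+1,k+1}$, then apply $f_{10}$ and subtract a suitable multiple of ${\bf v}_{ijk}$ to isolate a nonzero multiple of ${\bf v}_{i,j,k+1}$ — this is where $Q^-,Q^+$ nonvanishing is needed. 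Conversely, if some $Q^-_{jk}=0$ (resp. $Q^+_k=0$) one exhibits a proper nonzero submodule: with $Q^-_{jk}=0$ the vectors with second index $\le j$ in the appropriate sense, or more precisely a half-space cut out along the $j$-direction, become stable — this mirrors the classical argument in \cite{F1989,FGR2021}. The **main obstacle** is precisely the two-term nature of $e_{10},f_{10}$: isolating single basis vectors from the $2$-term images, and identifying the exact proper submodules when a $Q$ vanishes, requires a careful linear-algebra argument rather than the one-line shift argument available for $e_{01},f_{01}$; I expect to handle it by working slice-by-slice in $(i,k)$ and reducing to the statement that a $\mathbb Z$-indexed bidiagonal operator with nonvanishing super-diagonal is injective with "irreducible" orbit closure, plus the factorized forms of $Q^\pm$ recorded in \eqref{eq:REF109} to pin down exactly when a diagonal entry vanishes.
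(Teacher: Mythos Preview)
Your proposal is correct in outline and your treatment of Part~1 matches the paper's: the module structure is deferred to \cite{F1989,FGR2021}, the denominators $s_{jk}$ never vanish since $a_3\notin\mathbb Z$, and injectivity of the root vectors reduces to the observation that a $\mathbb Z$-indexed bidiagonal operator with nowhere-vanishing off-diagonal is injective on finitely supported vectors. One small omission: you should also note that $e_{11}$ and $f_{11}$ (computed as commutators) have the same bidiagonal shape with off-diagonals proportional to $S^{+}$ and $S^{-}$ respectively, so the condition $S^{\pm}\neq 0$ suffices for all six root vectors.

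For Part~2 your route diverges from the paper's. You argue directly with root operators: $\Gamma$-pointedness forces any submodule to be spanned by basis vectors, and then you track which $(i,j,k)$ are reachable from a given one via $e_{01},f_{01},e_{10},f_{10}$, finding that $S^{\pm}\neq 0$ lets you move freely in $i$ and always increase $k$ (via the $S^{\pm}$-terms of $e_{10},f_{10}$), while decreasing $k$ requires the $Q^{\pm}$-terms. The paper instead works inside a single $\mathfrak h$-weight space (where $i,j$ are fixed and $k\in\mathbb Z$), observes that $c_1$ is diagonal there and $c_2=f_{10}e_{10}$ is tridiagonal with sub-diagonal entries $Q^{-}_{jk}Q^{+}_{k}$ and super-diagonal entries built from $S^{\pm}/s_{\ast,\ast}$, and reduces simplicity of $V$ to irreducibility of this tridiagonal $U_0(\mathfrak g)$-action. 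Your approach is more elementary and self-contained; the paper's is shorter because it leverages the centralizer machinery of Sections~2 and~4.1 and makes the role of $c_2$ explicit, which is the template reused for $C_2$ and $G_2$. Two small points to tighten in your write-up: the ``subtract a suitable multiple'' step is really a $\Gamma$-projection (since basis vectors have distinct $\Gamma$-characters you can isolate each term of $c_2\,{\bf v}_{ijk}$ directly), and the proper submodule arising when some $Q^{+}_{k_0}=0$ is cut out by $k\ge k_0$ rather than along the $j$-direction (cf.\ the paper's subsequent analysis of subquotients).
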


\begin{proof} The fact that  $V(a_1, a_2, a_3, \xi, \mu)$  is 
    a   $\mathfrak{g}$-module was shown in \cite{F1989,FGR2021}. It is 
     $\Gamma$-pointed as $\Gamma$ separates the basis elements. It follows from formulas above that the module  $V(a_1, a_2, a_3, \xi, \mu)$ is torsion free if and only if $S^{+}_{ijk}$ and $S^{-}_{ik}$ are different 
      from zero  for all $i,j,k  \in \mathbb Z$. 
         Note that for  every $\mathfrak{h}$-weight $\lambda$ of $V(a_1, a_2, a_3, \xi, \mu)$, the $\lambda$-weight subspace is infinite dimensional with basis ${\bf v}_{ijk}$, where $i, j$ are determined by $\lambda$ and $k$ runs through $\mathbb Z$. In this basis the operator $c_1$ is presented by an infinite diagonal matrix and the operator $c_2$ is presented by   a  3-diagonal matrix $(b_{st})$ with $b_{st}=0$ for all $s,t$ such that $|s-t| > 1$.  This is a consequence of  relations above. 
The simplicity of $V(a_1, a_2, a_3, \xi, \mu)$ results in the simplicity of each $\lambda$-weight subspace as $U_0(\mathfrak{g})$-module. This implies the required conditions in item 2.
\end{proof}

\medskip

Simple subquotients of the module  $V(a_1, a_2, a_3, \xi, \mu)$ give all simple generic Gelfand-Tsetlin $\mathfrak{g}$-modules with finite or infinite weight multiplicities \cite{FGR2015}. Moreover, we have an immediate consequence of the previous theorem.


\begin{corollary} \label{prp:prp1}
 If $V'$ is a simple generic Gelfand-Tsetlin $\mathfrak{g}$-module, then it is isomorphic to a subquotient of $V(a_1, a_2, a_3, \xi, \mu)$ 
   for some suitable parameters such that $0 \le Re \, a_1 < 1$, $0 \le Re \, a_2 < 3$, $0 < Re \, a_3 < 2$, 
        $a_3\neq 1$, where $Re \, a$ stands for the real part of $a$.
\end{corollary}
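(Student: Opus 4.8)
The plan is to reduce the classification statement to the explicit description of generic Gelfand-Tsetlin modules already available in the literature, using the family $V(a_1,a_2,a_3,\xi,\mu)$ constructed in Theorem \ref{prp:prp1} as the universal receptacle. First I would recall what ``generic'' means: a simple Gelfand-Tsetlin $\mathfrak{g}$-module $V'$ is determined by a Gelfand-Tsetlin character, i.e.\ a character $\chi$ of the Gelfand-Tsetlin subalgebra $\Gamma$, and genericity means the entries of the associated Gelfand-Tsetlin tableau avoid all the integral coincidences that would make some $S^{\pm}$ or $Q^{\pm}$ coefficient vanish; equivalently, in the parametrization above, the defining data can be taken with $a_3\notin\mathbb Z$. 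By the results quoted from \cite{FGR2015,FGR2021}, every such $V'$ occurs as a subquotient of some $V(a_1,a_2,a_3,\xi,\mu)$; the only thing to pin down is that the parameters may be chosen in the stated fundamental domain.

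The key step is a normalization-of-parameters argument. I would first observe that the $\mathfrak h$-weights appearing in $V(a_1,a_2,a_3,\xi,\mu)$, read off from $h_{ij}^{(1)}=a_1+2i-j$ and $h_{ij}^{(2)}=a_2-i+2j$, only depend on $a_1,a_2$ modulo the root lattice shifts induced by the index translations $i\mapsto i+1$, $j\mapsto j+1$; running $i,j$ over $\mathbb Z$ shows that replacing $a_1$ by $a_1+m$ and $a_2$ by $a_2+n$ for $m,n\in\mathbb Z$ (with the appropriate compensating change so the lattice of weights is preserved) yields an isomorphic module after reindexing the basis ${\bf v}_{ijk}$. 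Concretely the shifts $a_1\mapsto a_1+1$, $a_2\mapsto a_2+3$ and $a_3\mapsto a_3+2$ correspond to reindexings $k$ and $j$ of the basis and leave the $\mathfrak g$-action formulas \eqref{eq:REF_A2418} intact up to relabeling, because all the half-integer combinations $S^{\pm}$, $T^{\pm}$, $Q^{\pm}$ are invariant under the simultaneous shift of a parameter and the matching index. Hence one may always bring the real parts into the half-open boxes $0\le \mathrm{Re}\,a_1<1$, $0\le \mathrm{Re}\,a_2<3$, $0<\mathrm{Re}\,a_3<2$; the genericity hypothesis $a_3\notin\mathbb Z$ already forbids $\mathrm{Re}\,a_3\in\{0\}$ in the relevant sense, and the excluded value $a_3\neq 1$ is exactly the remaining integral coincidence inside the box that would force some coefficient in \eqref{eq:REF109} to vanish and spoil either torsion-freeness or the genericity.

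Then I would assemble: given a simple generic Gelfand-Tsetlin $V'$, choose any parameters $(a_1,a_2,a_3,\xi,\mu)$ realizing its Gelfand-Tsetlin character (possible by the classification in \cite{FGR2015,FGR2021}, since genericity guarantees a well-defined, essentially unique such character up to the lattice action just described), apply the normalization above to land in the fundamental domain with $a_3\neq 1$, and conclude by Theorem \ref{prp:prp1} that $V'$ embeds as a subquotient of the corresponding $V(a_1,a_2,a_3,\xi,\mu)$. Uniqueness of $V'$ as a subquotient for given $\chi$ follows because the Gelfand-Tsetlin character determines the simple module.

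The main obstacle I expect is the bookkeeping in the normalization step: one must verify carefully that the three basic parameter shifts really are induced by honest reindexings of the basis ${\bf v}_{ijk}$ that preserve \emph{all} of the structure constants in \eqref{eq:REF_A2418} simultaneously (not just the diagonal $\mathfrak h$-action), and that after shifting one does not accidentally create a new coincidence that lands on the boundary of one of the boxes in a way that conflicts with genericity — in particular making sure the exclusion $a_3\neq 1$ is both necessary and sufficient to stay generic inside $0<\mathrm{Re}\,a_3<2$. The rest is a direct appeal to the cited classification.
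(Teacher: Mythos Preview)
Your proposal is correct and follows the same line as the paper, which in fact gives no proof at all: it simply cites \cite{FGR2015} for the statement that every simple generic Gelfand--Tsetlin module arises as a subquotient of some $V(a_1,a_2,a_3,\xi,\mu)$ and declares the corollary an ``immediate consequence'' of the preceding theorem. Your normalization-of-parameters argument is exactly the content the paper leaves implicit, and your caution about checking that the reindexings preserve all the structure constants simultaneously is well placed (the precise shifts are slightly more intertwined than the ones you list, since moving $j$ also shifts $s_{jk}$ and hence $a_3$, but the conclusion is unchanged).
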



\subsection{ Subquotients  of $V(a_1, a_2, a_3, \xi, \mu)$}

As we have just seen, the 
$\Gamma$-pointed module 
$V(a_1, a_2, a_3, \xi, \mu)$ is simple when 
$S^{+}_{ijk}$, $S^{-}_{ik}$, $Q^{-}_{jk}$, $Q^{+}_{k}$ are nonzero,  for all $i,j,k  \in \mathbb Z$. Suppose now that one or more coefficients are equal to zero. Let 
$I = \mathbb Z^3 \subset \mathbb R^3$ 
be the weight lattice (the lattice of indexes) of the module 
$V(a_1, a_2, a_3, \xi, \mu)$.
We then have the following cases:\\

{\bf Case 1}. Assume that $S^{+}_{i_0,j_0,k_0} = 0$, for some $i_0,j_0,k_0 \in \mathbb Z$. Then
$a_1 + a_3 \in 2\mathbb Z + 1$. 
Consider the following function $F(i,j,k) = S^{+}_{ijk}$ on the set $I$. 
The subset of zero points $P(F) = \{(i,j,k) \, |\,  F(i,j,k)=0\}$ is called the splitting hyperplane
for the function $F$.
The splitting hyperplane $F(i,j,k)$ divides the set $I$ in two subsets, called the positive and the negative
components, $I =I_F^{+}\cup I_F^{-}$ as follows:
$$
    I_F^{+}  =\{(i,j,k) \, | \, F(i,j,k) > 0\} \quad \mbox{and}\quad
    I_F^{-}  =\{(i,j,k) \, | \, F(i,j,k)\le 0\}\, .
$$

Then the subspace 
$V^\prime = \{{\bf v}_{ijk}\, | \, (i,j,k)  \in I_F^{-} \} \subset V$
is a simple submodule of $V$.
The same holds if $a_1 - a_3 \in 2\mathbb Z+1$ and $S^{-}_{i_0,k_0} = 0$, for some $i_0,k_0 \in \mathbb Z$.

\medskip

{\bf Case 2.}
Assume that $Q^{+}_{k} = 0$, for some $k \in \mathbb Z$.
As we can see, $Q^{+}_{k}$ is the product of three first degree polynomials $F_m(i,j,k)=(T^{+}_{k}-t_m)$, for $m=1,2,3$ in the variable $k$.

Let us consider the case when $Q^{+}_{k} = 0$ has three distinct roots. We need to find possible values of the parameters
$a_1,a_2,a_3,t_1,t_2,t_3$ such that the system of three equations $F_m(i,j,k_m)=0$, with $m=1,2,3$,  has integer solutions for the variables $k_m$.
We find that $Q^{+}_{k} = 0$, for  $k=k_1,k_2,k_3 \in \mathbb Z$, if 
 the following conditions hold: $t_m = \frac{1}{3}(k_1+k_2+k_3) - k_m$, for $m=1,2,3$, and $a_3 = 3-\frac{1}{3}(2k_1+2k_2+2k_3+a_1+2a_2)$.
Suppose that $k_1 > k_2 > k_3$.
Then we have three splitting hyperplanes  $P_m=\{ (i,j,k)\, | \, F_m(i,j,k) =0 \}$ and $I =I_{F_m}^{+}\cup I_{F_m}^{-}$, where
$$
    I_{F_m}^{+}  =\{(i,j,k)\,  |\,  F_m(i,j,k) \ge 0\}\quad \mbox{and}\quad
    I_{F_m}^{-}  =\{(i,j,k)\,  |\,  F_m(i,j,k) < 0\}$$ for $m=1,2,3$.
Consequently, we obtain:

\begin{proposition}\label{lem:lem101c} 
The subspaces 
  $V_m^\prime = \{{\bf v}_{ijk}\, | \; (i,j,k)  \in I_{F_m}^{+} \} \subset V(a_1, a_2, a_3, \xi, \mu)$
are  submodules of $V(a_1, a_2, a_3, \xi, \mu)$
satisfying the inclusion 
  $V_1^\prime  \subset V_2^\prime   \subset  V_3^\prime$.

If none of $S^{+}_{ijk}$ or $S^{-}_{ik}$ or $Q^{-}_{jk}$  are zero, for all choices of indices, then  
$V_1^\prime$ and  
$V_2^\prime/V_1^\prime$ and
$V_3^\prime/V_2^\prime$ and
$V(a_1, a_2, a_3, \xi, \mu)/V_3^\prime$ 
are simple torsion-free modules.
The dimensions of the corresponding weight spaces are 
$V_1^\prime$: $\infty$,      
$V_2^\prime/V_1^\prime$: $k_1-k_2$, $V_3^\prime/V_2^\prime$: $k_2-k_3$, and 
$V(a_1, a_2, a_3, \xi, \mu)/V_3^\prime$: $\infty$.
\end{proposition}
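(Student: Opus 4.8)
The plan is to verify directly from the action formulas \eqref{eq:REF_A2418} that each subspace $V_m^\prime$ is invariant under every generator of $\mathfrak{g}$, then show the successive quotients are simple and torsion-free, and finally read off the weight-space dimensions. First I would observe that the generators $z_1, z_2, h_{01}, h_{10}$ act diagonally and therefore preserve any subspace spanned by a subset of the basis $\{{\bf v}_{ijk}\}$, so invariance only needs to be checked for $e_{01}, f_{01}, e_{10}, f_{10}$. Among these, only $f_{10}$ shifts the index $k$ downward (by the term $Q^{+}_{k}{\bf v}_{i,j-1,k-1}$), while $e_{01}, f_{01}$ fix $k$ and $e_{10}$ shifts $k$ up. Since $F_m(i,j,k) = T^{+}_{k} - t_m$ depends only on $k$ and is strictly increasing in $k$, the set $I_{F_m}^{+}$ is "upward closed" in the $k$-direction: if $(i,j,k)\in I_{F_m}^{+}$ then $(i',j',k')\in I_{F_m}^{+}$ whenever $k' \ge k$. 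Hence $e_{01}, f_{01}$ (which keep $k$) and $e_{10}$ (which raises $k$) automatically preserve $V_m^\prime$; for $f_{10}$ the only danger is the term landing at $k-1$, but this term carries the coefficient $Q^{+}_{k}$, which by construction vanishes precisely on the hyperplane $P_m$ when $k = k_m$ — so the coefficient kills exactly the transitions that would leave $I_{F_m}^{+}$. This is the key computation and the main obstacle: one must check that the boundary case $k = k_m$ (equivalently $T^{+}_{k} = t_m$, i.e. $F_m(i,j,k)=0$, which by our convention $I_{F_m}^{+} = \{F_m \ge 0\}$ lies \emph{inside} $V_m^\prime$) maps into $V_m^\prime$, and the factorization $Q^{+}_{k} = (T^{+}_{k-1}+t_1)(T^{+}_{k-1}+t_2)(T^{+}_{k-1}+t_3)$ together with the relation $t_m = \frac13(k_1+k_2+k_3)-k_m$ makes $Q^{+}_{k_m} = 0$, so ${\bf v}_{i,j-1,k_m-1}$ never appears; the surviving term $\frac{S^{+}_{i,j,k}}{s_{j+1,k}s_{jk}}{\bf v}_{i,j-1,k}$ stays at level $k_m \in I_{F_m}^{+}$. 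The inclusions $V_1^\prime \subset V_2^\prime \subset V_3^\prime$ follow from $k_1 > k_2 > k_3$, which forces $I_{F_1}^{+}\subset I_{F_2}^{+}\subset I_{F_3}^{+}$.

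Next I would establish simplicity of the four subquotients. Each subquotient has a basis given by the ${\bf v}_{ijk}$ with $k$ in a fixed range: $k \ge k_1$ for $V_1^\prime$; $k_2 \le k < k_1$ for $V_2^\prime/V_1^\prime$; $k_3 \le k < k_2$ for $V_3^\prime/V_2^\prime$; and $k < k_3$ for $V(a_1,a_2,a_3,\xi,\mu)/V_3^\prime$. Because $\Gamma$ separates all basis vectors (the module is $\Gamma$-pointed), any submodule of a subquotient is spanned by a subset of these basis vectors, so it suffices to show one can move between any two basis vectors of a given subquotient using the action of $\mathfrak{g}$. The operators $e_{01}, f_{01}$ connect all values of $i$ (their coefficients $S^{+}_{ijk}, S^{-}_{ik}$ are assumed nonzero), $e_{10}$ raises $j$ with nonzero leading coefficient $Q^{-}_{jk}$ (assumed nonzero), and within a subquotient $f_{10}$ lowers $j$: the term $\frac{S^{+}_{i,j,k}}{s_{j+1,k}s_{jk}}{\bf v}_{i,j-1,k}$ has nonzero coefficient, so after quotienting out the lower piece we can decrease $j$ while keeping $k$ fixed; and $e_{10}$ raises $k$ within the allowed band (or, for $V_1^\prime$, arbitrarily). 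The genericity hypotheses ($a_3 \notin \mathbb{Z}$, all $S^{\pm}$ and $Q^{-}$ nonzero) guarantee that none of these transition coefficients vanish, so the orbit of any basis vector is the whole subquotient. Torsion-freeness of the subquotients follows because each root vector acts on a subquotient by a matrix whose off-diagonal entries (in the relevant $k$-band) are nonzero, hence injectively — this is inherited from the torsion-freeness established in Theorem \ref{prp:prp1}, part 1, restricted to the invariant bands.

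Finally, the weight-space dimensions are immediate from the description of the bases: fixing an $\mathfrak{h}$-weight $\lambda$ determines $i$ and $j$ (as noted in the proof of Theorem \ref{prp:prp1}), and then the $\lambda$-weight space of a subquotient has basis $\{{\bf v}_{ijk} : k \text{ in the band}\}$, giving dimension equal to the length of the band: $\infty$ for $V_1^\prime$ (the band $k \ge k_1$), $k_1 - k_2$ for $V_2^\prime/V_1^\prime$, $k_2 - k_3$ for $V_3^\prime/V_2^\prime$, and $\infty$ for $V(a_1,a_2,a_3,\xi,\mu)/V_3^\prime$ (the band $k < k_3$). The one subtlety to double-check is that the convention $I_{F_m}^{+} = \{F_m \ge 0\}$ is consistent across all three $m$ so that the half-open bands $[k_{m+1}, k_m)$ come out with the stated integer lengths; this is where the explicit relation $t_m = \frac13(k_1+k_2+k_3) - k_m$ must be used to pin down exactly which integer $k$-values satisfy $F_m(i,j,k) = 0$.
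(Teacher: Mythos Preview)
Your argument is correct in outline and supplies exactly the kind of direct verification the paper omits (the proposition is stated there without proof, as an immediate consequence of the Case~2 setup). Two small points deserve tightening. First, in the simplicity paragraph you explain how to raise $k$ via the second term of $e_{10}$, but you should also note that to \emph{lower} $k$ within a band one uses the first term of $f_{10}$, whose coefficient $Q^{+}_{k}$ is nonzero for every $k$ strictly above the band's lower boundary (since $Q^{+}_{k}=0$ only at $k=k_1,k_2,k_3$); without this, a vector at the top of a finite band cannot reach the bottom. Second, your torsion-freeness justification is slightly off: injectivity of the bidiagonal operators $e_{10},f_{10}$ (and likewise $e_{11},f_{11}$) on a band follows from the nonvanishing of their \emph{diagonal} entries in the $k$-indexing --- namely $Q^{-}_{jk}$ for $e_{10}$ and $S^{+}_{ijk}/(s_{j+1,k}s_{jk})$ for $f_{10}$ --- not the off-diagonal ones, and this is not literally ``inherited'' from Theorem~\ref{prp:prp1}, since subquotients of torsion-free modules need not be torsion-free in general; it must be checked directly on each band, which indeed works under your hypotheses. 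With these two clarifications your proof is complete.
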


A similar construction of a family of torsion-free submodules can be obtained in the case when $Q^{-}_{j,k}=0$. 
Combining the conditions under which one or more coefficients are equal to zero we can obtain several distinct modules.
The full classification of $\mathfrak{sl}(3)$ modules was obtained in \cite{FGR2021} using the Gelfand-Tsetlin tableaux technique.


\section{Construction of simple weight $C_2$-modules}

In this section we use the technique developed in the previous section to construct 
simple Gelfand-Tsetlin modules for the Lie algebra of type $C_2$.

\subsection{The centralizer for $C_2$}
\label{sec:Centrlzr}

Consider  the root system of $C_2$:
$$\Delta  =  \{ \alpha_1, \alpha_2, \alpha_3= \alpha_1+\alpha_2, \alpha_4=2\alpha_1+\alpha_2,  
  \alpha_5=-\alpha_4,  \alpha_6=-\alpha_3, \alpha_7=-\alpha_2, \alpha_8=-\alpha_1\}$$  
and the Chevalley basis:
$e_{10} = E_{12} - E_{43}$,                                               
$f_{10} = E_{21} - E_{34}$,                                   
$e_{01} = E_{31}$,                                                         
$f_{01} = E_{13}$,                                             
$h_{10}  = ( E_{11} - E_{22}- E_{33} + E_{44}), $
$h_{01} = -E_{11} + E_{33},              $
$e_{11}  = -E_{32} - E_{41}$,                     
$e_{21}  = 2 E_{42}$,                     
$f_{11}= -E_{14} - E_{23}$,                     
$f_{21} = 2 E_{24}$                   
 with the order
$
h_{10} < h_{01} < f_{10} < f_{01} < f_{11} < f_{21} < e_{21}
 < e_{11} < e_{01} < e_{10}. 
$

 
The following is a complete set of indecomposable lists of roots:
$\{\alpha_1, \alpha_8\}$,          
 $\{\alpha_2, \alpha_7\}$,        
 $\{\alpha_3, \alpha_6\}$,       
 $\{\alpha_4, \alpha_5\}$,          
$ \{\alpha_1, \alpha_2, \alpha_6\}$,       
$ \{\alpha_3, \alpha_7, \alpha_8\}$,       
$ \{\alpha_1, \alpha_3, \alpha_5\}$,       
$ \{\alpha_4, \alpha_6, \alpha_8\}$,             
 $\{\alpha_1, \alpha_1, \alpha_2, \alpha_5\}$,  
 $\{\alpha_4, \alpha_7, \alpha_8, \alpha_8\}$,  
$ \{\alpha_3, \alpha_3, \alpha_5, \alpha_7\}$,  
 $\{\alpha_2, \alpha_4, \alpha_6, \alpha_6\}$.

Choose the following generators of $U_0=U_0(C_2)$:
$$
  h_{1}  = h_{01}, \,
   h_{2}  = h_{10}, \,
  c_{1}  = f_{01} e_{01}, \,
  c_{2}  = f_{21} e_{21}, \,
  c_{3}  = f_{10} e_{10}, \,
  c_{4}  = f_{11} e_{11}, $$
$$  c_{5}  = f_{11} e_{01} e_{10}, \,
  c_{6}  = f_{10} f_{01} e_{11}, \,
  c_{7}  = f_{21} e_{11} e_{10}, \,
  c_{8}  = f_{10} f_{11} e_{21}, $$
$$  c_{9}  = f_{21} e_{01} e_{10} e_{10}, \,
  c_{10}  = f_{10}f_{10} f_{01} e_{21}, \,
  c_{11}  = f_{01}f_{21} e_{11} e_{11}, \,
  c_{12}  = f_{11} f_{11} e_{21} e_{01}.  $$

 Order the set of perfect monomials as follows:
$$
  h_{1} < h_{2} < c_1 < c_2 < c_3 < c_4 < c_5 < c_6 < c_7 < c_8 < c_9 < c_{10} < c_{11} < c_{12}\, .                          
$$

As in type $A_2$ we have:
\begin{proposition}\label{lem:lemmaC2_51}
	\
\begin{itemize}
\item[1)] The following set of monomials is a basis of 
$U_0$: 
\begin{align}
P = & \{  h_1^{s_1}   h_2^{s_2} c_1^{s_3}  c_2^{s_4}  c_3^{s_5}  c_4^{s_6}  c_m^{s_m}  c_n^{s_n}\,  |
 \, s_1,s_2, \ldots, s_6,s_m,s_n \in \mathbb{N} \text{ and } \nonumber\\
    &  (m,n) \in \{ (5,9), (5,12),(6,10),(6,11),(7,9),(7,11),(8,10),(8,12) \}  \}.   \nonumber
\end{align}

\item[2)] 
The set  $\tilde{K}=\{ X\,  |\,  X \in K', {\rm Len}(X) = 2 \}$
   is a generating set of the ideal of relations $K$. 
\end{itemize}
\end{proposition}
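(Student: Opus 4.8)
The plan is to prove Proposition \ref{lem:lemmaC2_51} as a direct application of the machinery developed in Section 2, exactly mirroring the argument that establishes Proposition \ref{thm_41} in type $A_2$. The two assertions are, respectively, (1) that the listed set $P$ is a PBW-type basis of $U_0(C_2)$ and (2) that the length-two relations $\tilde K$ already generate the full ideal of relations $K$.

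\textbf{Part (1): the basis.} First I would invoke Lemma \ref{lem:lemCart_25}(4): the semi-perfect monomials $S^{(i)}(\mathfrak g)$ form a basis of $U_0^{(i)}(\mathfrak g)$, and these are built out of the $14$ perfect monomials $h_1,h_2,c_1,\dots,c_{12}$ listed above, which in turn come from the complete set of indecomposable zero-weight lists of roots given (justified by Lemma \ref{lem:lemCart_23}). So it suffices to identify which products $c_{i_1}\cdots c_{i_m}$ fail to be semi-perfect and show that after discarding them one is left exactly with the monomials in $P$. A product is non-semi-perfect precisely when its associated root list $L_1$ admits a strictly smaller rearrangement into perfect factors; concretely, one checks that whenever two of $c_5,\dots,c_{12}$ appear together \emph{except} for the eight admissible pairs $(m,n)\in\{(5,9),(5,12),(6,10),(6,11),(7,9),(7,11),(8,10),(8,12)\}$, the combined root list decomposes through lower-order perfect monomials $c_1,c_2,c_3,c_4$ (mimicking how $c_4c_5$ in type $A_2$ reduces via $c_1c_2c_3$), so such a monomial is not minimal in its $T_1$-class. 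For the eight admissible pairs, the root list $L_1$ is genuinely indecomposable as a product of \emph{two} degree-$3$ perfect monomials and cannot be rewritten with a smaller leading factor, so these monomials survive. Combined with the fact that $c_1,c_2,c_3,c_4$ are central-free degree-two generators with no further reductions among themselves, this pins down $P$ exactly.

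\textbf{Part (2): the relations.} For the second assertion I would appeal directly to Theorem \ref{thm-main_26}, which already states that $K$ is generated by $K'$, the set of all relations of length $\le d_0$, where $d_0$ is the maximal degree of a perfect monomial. Here the perfect monomials $c_5,\dots,c_{12}$ have degree $3$ and the pairs like $c_9=f_{21}e_{01}e_{10}e_{10}$ have degree $4$, so naively $d_0=4$; the content of the proposition is the sharper claim that length-two relations already suffice. This follows by the same reduction run in the proof of Theorem \ref{thm-main_26}: in Case 2 of that argument one locates a minimal set of perfect factors $z_1\cdots z_t$ whose root list covers $L_1(y_1)$, and applies a single length-$\le d_0$ relation; but once Part (1) tells us that the only obstructions to semi-perfectness are pairwise (a product of \emph{two} forbidden $c$'s), each reduction step can be carried out using a relation of length exactly two, and the induction on degree closes. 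In other words, the forbidden-pair structure of $P$ is exactly what forces $\tilde K$ (length two) rather than $K'$ (length $\le 4$) to generate $K$.

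\textbf{Main obstacle.} The essential difficulty is the bookkeeping in Part (1): verifying for each of the $\binom{8}{2}+8=36$ unordered pairs (with repetition) of the degree-$3$ perfect monomials $c_5,\dots,c_{12}$ whether the concatenated six-element root list is decomposable through $\{\alpha_1,\alpha_8\},\{\alpha_2,\alpha_7\},\{\alpha_3,\alpha_6\},\{\alpha_4,\alpha_5\}$ and the degree-$3$ indecomposables — i.e., whether a strictly $\le$-smaller perfect factorization exists. This is a finite but delicate check on root multiplicities in the $C_2$ root system, and one must be careful that the eight admissible pairs are precisely those whose combined list is \emph{not} refinable; the weight-zero and ``no $\alpha$ with $-\alpha$ both present in an indecomposable'' constraints from the proof of Lemma \ref{lem:lemCart_23} are the tools that make this tractable. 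Once the basis in Part (1) is secured, Part (2) is essentially a corollary of Theorem \ref{thm-main_26} together with the pairwise nature of the obstructions, so no new ideas are needed there.
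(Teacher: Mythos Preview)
Your plan is essentially the paper's own argument: identify the semi-perfect monomials by checking which pairs from $c_5,\ldots,c_{12}$ can coexist (the paper gives the example $L_1(c_5c_6)=L_1(c_1c_3c_4)$), then observe that this pairwise obstruction structure means length-two relations suffice for normalization (the paper illustrates with $L_1(c_8c_9)=L_1(c_2c_3c_5)$). Two small corrections to your bookkeeping: the concatenated root lists are not all six-element (pairs involving the degree-$4$ generators $c_9,\ldots,c_{12}$ give seven or eight roots), and the smaller rewriting need not use only $c_1,\ldots,c_4$ --- as the paper's $c_8c_9\to c_2c_3c_5$ example shows, the point is merely that the leading factor drops in the order, not that only degree-two generators appear.
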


\begin{proof}
Suppose that $X$ is a semi-perfect monomial. Then it cannot contain certain pairs of perfect elements.
For example, $X$ cannot contain both $c_5$ and $c_6$, since $L_1(c_5 c_6) = L_1(c_1 c_3 c_4)$
and $c_1 c_3 c_4 < c_5 c_6$.
 A direct calculation shows that the only possible combinations of perfect monomials 
 are the ones given above, proving 1).
 Recall now the  relations  $c_i c_j = \mathcal{N}( c_i c_j )$, for $i,j = 1, \ldots, 12$. 
One can easily see that   $\mathcal{N}( X_1 X_2)$ can be computed using only relations of length two
for any semi-perfect monomials $X_1$, $X_2$. For example, 
 if  $X_1 = c_5 c_9$ and $X_2 = c_8  c_{12}$, then
$Y = c_2 c_3 c_5 c_5 c_{12}$ is semi-perfect 
and $L_1(X_1 X_2) = L_1(Y)$. Moreover, $L_1(c_8 c_9) = L_1(c_2 c_3 c_5)$.
\end{proof}

Denote $h_3 = h_1+h_2$ and let $U^\prime_0 = U_0 [h^{-1}_1, h^{-1}_3 ]$. It will be convenient here to work with $U^\prime_0$.

The list of all relations is rather big, so we will give only those that are used in our calculations.
They are used to express  $c_{12}, \ldots, c_5$ via
  $c_{4}$, $c_3$, $c_2$, $c_1$, $h_1$, $h_3$, $z_1$ in $U^\prime_0$:

\begin{align}  
\label{en:refer0209}
c_{12}  = & -c_{10} - c_{2} - 2 c_{8} + [ c_{1}, c_{8} ],   \\
c_{11}  = & -c_{9} -2c_{7} +c_{2}- [c_{1},c_{7}],   \\
c_{10}  = & c_{9} - c_{8} + c_{7}  + {\frac{1}{2}}  [c_{5}, c_{2}]  + [c_{1}, c_{8}]\\
c_{ 9}  = & {\frac{1}{2 h_1}}  ([c_1,[c_1,c_7]] 
 - 2 c_{ 1}c_{ 2}  
 - 2 c_{ 7}c_{ 1} 
 - 2 c_{ 1}c_{ 7} )
 +   {\frac{1}{2}} [ c_7,c_1]  
 - 2 c_{ 7} 
 -   c_{ 2}  ,\\
c_{8}  =  &   c_{7}  +{\frac{1}{2}}[c_{2}, c_{3}]                         \\ 
c_{ 7}  = & {\frac{1}{16 h_3}}  (-[c_2,[c_2,c_3]] 
 + 8 c_{ 3}c_{ 2}  
 - 8 c_{ 2}c_{ 4} 
 - 8 h_{ 1}c_{ 2} )
 + {\frac{1}{4}} [ c_3,c_2]  
 - {\frac{1}{2}} c_{ 2}  ,\\
c_{6}  =  &   c_{5}   + [c_{3}, c_{1}]                                     \\
c_{5}  =  & - c_{4} + c_1c_3 + {\frac{1}{2h_{1}} } ( -[c_1, c_1, c_3 ] 
             - c_3c_{1} (h_{1}-2) - 2c_1c_4).                     \label{en:refer0209a}
\end{align}

The next  relations are used to find relations between the
elements  $c_{4}$, $c_3$, $c_2$, $c_1$, $h_1$, $h_3$, $z_1$.

\begin{align}  
c_5    c_1 & = -c_6 
              +(h_1 +1) c_5 
              +h_1  c_4 
              +c_1    c_5 
              +c_1    c_4 
              -c_1    c_3    \label{en:refer0219a}  \\       
c_7    c_2 & = -4 h_3  c_7 
              +c_2    c_7 
              -2 c_2    c_4 
              +2 c_2    c_3 
              +(-2 h_3 -2 h_1 ) c_2  \label{en:refer0219b}\\
c_5    c_3 & = c_9 
             +c_8 -2 c_6 
             +(h_3 -h_1 ) c_5 
             +c_3    c_5 
             -c_3    c_4 
             +2 c_1    c_3      \label{en:refer0219c}\\
c_7    c_3 & = -2 c_9 
              -2 c_8 
              +(h_3 -h_1 ) c_7 
              +4 c_6 
              +c_3    c_7 
              +2 c_3    c_4 
              -c_2    c_3       \label{en:refera0219d}\\
c_6    c_5 & = c_{10} 
             +2 c_8 
             -c_7 
             +(h_3 -h_1 -2) c_6 
             -c_4    c_5              \nonumber \\
          &  -c_3    c_6 
             -2 c_3  c_4 
             -c_1    c_8 
             +2 c_1  c_6 
             +c_1    c_3    c_4 
             +(h_3 +h_1 +2) c_1    c_3.     \label{en:refera0219e}
\end{align}          
                                 


The center of the universal enveloping algebra of $\mathfrak{g}$ is generated by the  Casimir elements
$$z_1 =  4c_1 +c_2 +2c_3 +2c_4 + 2 h_1^2 + 2 h_3^2+2h_1 +4h_3$$  and
\begin{align*}  
z_2 = 
&
 2c_{12}
+2c_{11}
-2c_{10}
-2c_{9}
+(2h_{1}+1)c_{8}
+(2h_{1}-1)c_{7}
\nonumber\\
&
 + (4h_{3}+6) c_{6}
 + (4h_{3}+10) c_{5} 
 - c_{4}^2
 + (-2h_{1}h_{3}-4h_{1}+2h_{3}+6) c_{4} 
 - 2c_{3}c_{4}
\nonumber\\
&
 - c_{3}^2
 + (2h_{1}h_{3}+4h_{1}+2h_{3}+6) c_{3}
 - (h_{1}-1)(h_{1}+1) c_{2}
 - 4c_{1}c_{2}
\nonumber\\
&
-4(h_{2}+3)(h_{2}+1) c_{1}
-h_{1}(h_{3}+3)(h_{3}+1)(h_{1}+2).
\end{align*}  

As in type $A$, our goal is to find the ``best" choice of a generating set $S$ of $U_0$, such that 
the cardinality of the set $S_3$ is minimal.


\begin{lemma}\label{lem: lemmaC2_54}
	\
\begin{itemize}
\item[1.] The set $S = \{ h_1, h_2, z_1,z_2, c_1, c_2,c_3 \}$ is a generating set of 
  the centralizer 
 $U^\prime_0$  with the following decomposition:
  $S_1 = \{h_1, h_2, z_1, z_2\}$,  $S_2 = \{c_1, c_2\}$, $S_3 = \{c_3\}$.

\item[2.] 
  The decomposition of the set of relations is the following:
   $R = R_1 \cup R_2$,  where $R_1$ consists of two relations obtained from 
     \eqref{en:refer0219a}-\eqref{en:refer0219b}, while 
    $R_2$ consists of three relations obtained from 
     \eqref{en:refer0219c}-\eqref{en:refera0219e}.
\end{itemize}    
\end{lemma}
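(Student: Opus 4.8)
The plan is to derive the set $S$ from the ``perfect'' generating set $\{h_1,h_2,c_1,\dots,c_{12}\}$ of $U_0$ furnished by Section~2 by eliminating generators one at a time, and to keep track of how the length-$2$ relations of Proposition~\ref{lem:lemmaC2_51} transform along the way; this yields both parts simultaneously. For part~1 I would first pass to $U'_0=U_0[h_1^{-1},h_3^{-1}]$, which is exactly what makes the identities \eqref{en:refer0209}--\eqref{en:refer0209a} meaningful. Applied from the bottom ($c_5$) to the top ($c_{12}$) --- each line refers only to lower-indexed $c_m$'s and to $c_1,c_2,c_3,c_4$ --- these identities express every $c_m$ with $5\le m\le12$ as a noncommutative polynomial in $c_1,c_2,c_3,c_4$ with coefficients in $\mathbb{C}[h_1^{\pm 1},h_3^{\pm 1}]$ (recall $h_3=h_1+h_2$), so $\{h_1,h_2,c_1,c_2,c_3,c_4\}$ already generates $U'_0$. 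Solving the Casimir identity $z_1=4c_1+c_2+2c_3+2c_4+2h_1^2+2h_3^2+2h_1+4h_3$ for $c_4$ trades $c_4$ for $z_1$, and $z_2$ --- being by construction a polynomial in the $c_i$ and $h_i$ --- then becomes a polynomial in $S$ and may be adjoined freely. Hence $S=\{h_1,h_2,z_1,z_2,c_1,c_2,c_3\}$ generates $U'_0$.

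For the partition: $h_1,h_2\in\mathfrak{h}$ centralize $U_0$ by definition and $z_1,z_2\in Z(U(\mathfrak{g}))$, so $S_1$ consists of central elements, and in fact generates the centre $\mathbb{C}[h_1^{\pm 1},h_3^{\pm 1}][z_1,z_2]$ of $U'_0$. Next one checks $[c_1,c_2]=0$; this is immediate because $f_{01}=E_{13}$, $e_{01}=E_{31}$, $f_{21}=2E_{24}$, $e_{21}=2E_{42}$ are pairwise commuting matrices in $\mathfrak{gl}_4$ (alternatively, it can be read off the length-$2$ relations underlying Proposition~\ref{lem:lemmaC2_51}). Therefore $S_1\cup S_2$ generates a commutative subalgebra --- the Gelfand--Tsetlin subalgebra $\Gamma$ --- and the only remaining generator is $S_3=\{c_3\}$.

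For part~2, by Proposition~\ref{lem:lemmaC2_51}(2) the ideal $K$ is generated by the length-$2$ relations $c_ic_j-\mathcal{N}(c_ic_j)$ with $1\le i,j\le12$, so it suffices to rewrite each of these in the generators $S$ via the elimination formulas above and to show that all of them follow from $R_1\cup R_2$. The key point is a width count: after the substitutions each of $c_4,c_5,\dots,c_{12}$ involves at most one occurrence of $c_3$, hence has width $\le1$, so every product $c_ic_j$ has width $\le2$. The relations \eqref{en:refer0219a} and \eqref{en:refer0219b} (the original $c_5c_1$ and $c_7c_2$ relations) rewrite into the two width-$1$ rules that move a single $c_3$ past $c_1$ and past $c_2$, which is $R_1$; the relations \eqref{en:refer0219c}, \eqref{en:refera0219d}, \eqref{en:refera0219e} (the $c_5c_3$, $c_7c_3$, $c_6c_5$ relations) rewrite into three width-$2$ rules among the degree-two words in $c_3$ modulo lower width, which is $R_2$. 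Since $S_1$ is central and $c_1,c_2$ commute, every remaining length-$2$ relation carries no new information once $R_1\cup R_2$ is imposed; equivalently, $R_1\cup R_2$ already rewrites every word in $S$ to the normal form dictated by the basis of Proposition~\ref{lem:lemmaC2_51}(1).

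The hard part is precisely this last assertion --- that $R_1\cup R_2$ suffices, i.e.\ that the five rules form a confluent rewriting system and force no further relations. Concretely one must resolve the overlap ambiguities among the rules (words such as $c_3c_3c_1$, $c_3c_3c_2$, $c_3c_2c_3c_1$, where two reductions compete) and verify that the two ways of reducing them yield the same element of $U'_0$; equivalently, that reducing the explicit $P$-basis of Proposition~\ref{lem:lemmaC2_51}(1) by $R_1\cup R_2$ produces exactly the claimed normal forms. This is a finite but lengthy computation inside $U'_0$; everything else is the routine bookkeeping of the substitutions of part~1.
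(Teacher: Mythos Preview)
Your approach is essentially the paper's own: use \eqref{en:refer0209}--\eqref{en:refer0209a} to eliminate $c_5,\dots,c_{12}$, trade $c_4$ for $z_1$ via the quadratic Casimir, and then observe that the five relations \eqref{en:refer0219a}--\eqref{en:refera0219e} survive the substitution with the stated widths; the paper's proof says exactly this in two sentences and defers the rest to ``direct computations,'' which is precisely the confluence check you flag as the hard part. One small overstatement: you assert that $S_1$ generates \emph{the} centre of $U'_0$, but the lemma (and the paper) only require that the elements of $S_1$ be central and that $S_1\cup S_2$ be commutative, both of which you verify; the stronger claim about the full centre is neither needed nor established here.
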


\begin{proof}
Using \eqref{en:refer0209}-\eqref{en:refer0209a}, we can exclude the generators $c_{12}, \ldots, c_4$ from all other relations.
As the result, we will get the generating set $S= \{ h_1, h_2, z_1, c_1, c_2, c_3 \}$. The rest can be verified by direct computations.
\end{proof}

\begin{remark}
We cannot claim that $S$ is a generating set of $U_0$. Nevertheless,  any $U_0$-module $M$ with a nonzero scalar action of $h_1$ is a $U^\prime_0$-module.
\end{remark}

\subsection{Construction of torsion free $C_2$-modules}
\label{sec:C2torsionfree}

Let $\Gamma$ be the commutative subalgebra of $U_0(C_2)$ generated  by the elements
$h_1, h_2, z_1,z_2, c_1$. 
We give a construction of two families of $\Gamma$-pointed modules which depend on $4$ complex parameters.  

Fix arbitrary complex $a_1, a_2, a_3, a_4$ and $\xi$ and define the following set of indexed variables: 
\begin{equation}
\begin{aligned}
  h_{ij}^{(1)}  &  = a_1 + 2i - j, \,\,
  h_{ij}^{(2)}    = a_2 - 2i +2j, \,\,
  s_{jk}          = a_3-j+2k-1,   \,\,
  Q^{\pm}_{jk}  = \frac{\upsilon}{s_{jk}} \pm 1,                        \\                         
  S^{+}_{ijk}  &  = \frac{1}{2}(   a_1 + a_3 + 2 i - 2 j + 2 k -1 ), \,\,
  S^{-}_{ik}     = \frac{1}{2}(  -a_1 + a_3 - 2 i       + 2 k -1 ), \\
  T^{+}_{k}    & = \frac{1}{2}(   a_1 + a_2 + a_4       + 2 k -1),  \,\,                           
  T^{-}_{jk}     = \frac{1}{2}(  -a_1 - a_2 + a_4 - 2 j + 2 k -1),  \\
               & \text{ where}  \,  i, j, k \in \mathbb Z, \,\, \text{ and }\,\, \upsilon \,\, \text{is a root of the equation}\,\, \xi = 2(\upsilon+1)(\upsilon-2).                          
\label{eq:refer028}
\end{aligned}
\end{equation}

 Consider the vector space  $V(a_1, a_2, a_3, a_4, \xi)= \{{\bf v}_{ijk} \, |\,  i,j,k \in \mathbb Z\}$ and define the following operators on $V(a_1, a_2, a_3, a_4, \xi)$ (abusing the notation and using same letters as the generators of 
  $\mathfrak{g}$):
\begin{equation}
\begin{aligned}
\label{eq:REF103qq1}
h_{1}({\bf v}_{ijk})   & = h_{ij}^{(1)} {\bf v}_{ijk},  \,\,
h_{2}({\bf v}_{ijk})     = h_{ij}^{(2)} {\bf v}_{ijk},   \\
e_{01}({\bf v}_{ijk})  &  = S^{+}_{ijk}  {\bf v}_{i+1,j,k},  \,\,
f_{01}({\bf v}_{ijk})    = S^{-}_{ik}   {\bf v}_{i-1,j,k},    \\  
e_{10}({\bf v}_{ijk})  & = S^{-}_{ik}      Q^{+}_{j+1,k}    {\bf v}_{i,j+1,k+1}
                   +  T^{-}_{j+1,k}   Q^{-}_{j+1,k}    {\bf v}_{i,j+1,k}, \\     
f_{10}({\bf v}_{ijk})  & =  S^{+}_{ijk}     Q^{+}_{j+1,k}    {\bf v}_{i,j-1,k}  
                   +  T^{+}_{k-1}     Q^{-}_{j+1,k}    {\bf v}_{i,j-1,k-1},\\
e_{11}({\bf v}_{ijk})  &= - S^{+}_{ijk}   Q^{+}_{j+1,k}   {\bf v}_{i+1,j+1,k+1}               
                     + T^{-}_{j+1,k} Q^{-}_{j+1,k}   {\bf v}_{i+1,j+1,k},\\
f_{11}({\bf v}_{ijk})  &=   S^{-}_{i,k}   Q^{+}_{j+1,k}   {\bf v}_{i-1,j-1,k} 
                     - T^{+}_{k-1}   Q^{-}_{j+1,k}   {\bf v}_{i-1,j-1,k-1},\\
e_{21}({\bf v}_{ijk})  &= 2 T^{-}_{j+1,k}                 {\bf v}_{i+1,j+2,k+1}, \,\,
f_{21}({\bf v}_{ijk})  = 2 T^{+}_{k-1}                   {\bf v}_{i-1,j-2,k-1}.                  
\end{aligned}
\end{equation}

These formulas define a $\Gamma$-module structure on $V(a_1, a_2, a_3, a_4, \xi)$, but at this point we cannot claim a $\mathfrak{g}$-module structure.

\begin{lemma}\label{ lemmaC2_55-gamma-C2}
The subalgebra $\Gamma$ has a simple spectrum on $V(a_1, a_2, a_3, a_4, \xi)$, and hence
separates the basis elements  ${\bf v}_{ijk}$ if and only if $a_3\notin \mathbb Z$. 
\end{lemma}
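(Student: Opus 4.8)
The plan is to determine exactly when the action of $\Gamma=\langle h_1,h_2,z_1,z_2,c_1\rangle$ on the basis $\{{\bf v}_{ijk}\}$ has all distinct joint eigenvalues. Since $h_1,h_2,z_1,z_2,c_1$ all act diagonally on the stated basis (by the formulas \eqref{eq:REF103qq1} together with the fact that $z_1,z_2$ act as scalars $\xi$ and a function of the parameters), separating the basis elements is equivalent to the statement that the map
\[
(i,j,k)\ \longmapsto\ \bigl(h^{(1)}_{ij},\,h^{(2)}_{ij},\,c_1({\bf v}_{ijk})\bigr)
\]
is injective on $\mathbb Z^3$, where $c_1({\bf v}_{ijk})$ is the scalar by which $c_1=f_{01}e_{01}$ acts, namely $c_1({\bf v}_{ijk})=S^{-}_{i,k}\,S^{+}_{i,j,k}$ (computed from $f_{01}e_{01}{\bf v}_{ijk}=S^{-}_{i,k}e_{01}{\bf v}_{i-1,j,k}$, being careful whether the product is $f_{01}e_{01}$ or $e_{01}f_{01}$, and using the explicit $S^{\pm}$).

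First I would note that $h^{(1)}_{ij}=a_1+2i-j$ and $h^{(2)}_{ij}=a_2-2i+2j$ already determine the pair $(i,j)$: indeed $2h^{(1)}_{ij}+h^{(2)}_{ij}=2a_1+a_2+2i$ recovers $i$, and then $j$ follows. Hence $h_1,h_2$ alone separate the $\mathfrak h$-weights, and the only remaining issue is, for each fixed $(i,j)$, whether $k\mapsto c_1({\bf v}_{ijk})$ is injective on $\mathbb Z$. Writing $c_1({\bf v}_{ijk})=\tfrac14(-a_1+a_3-2i+2k-1)(a_1+a_3+2i-2j+2k-1)$, this is a quadratic polynomial in $k$ with leading coefficient $1$; its two roots in $k$ are $k=\tfrac12(a_1-a_3+2i+1)$ and $k=\tfrac12(-a_1-a_3-2i+2j+1)$. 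The quadratic $q(k)=(k-\rho_1)(k-\rho_2)$ fails to be injective on $\mathbb Z$ precisely when there exist distinct integers $k\ne k'$ with $k+k'=\rho_1+\rho_2=a_3-j+2k-1\ldots$ — more precisely $q(k)=q(k')$ iff $k+k'=\rho_1+\rho_2$, so injectivity on $\mathbb Z$ fails iff $\rho_1+\rho_2\in\mathbb Z$ (then pick $k,k'$ symmetric about the midpoint) — unless the forced collision has $k=k'$, which happens only if $\rho_1+\rho_2$ is an odd integer and the axis of symmetry is a half-integer, in which case one still gets $q(\tfrac{m-1}{2})=q(\tfrac{m+1}{2})$ for integer arguments when $m=\rho_1+\rho_2$ is odd. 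So non-injectivity for that fixed $(i,j)$ is equivalent to $\rho_1+\rho_2\in\mathbb Z$. Now $\rho_1+\rho_2=-a_3+j-i+\tfrac{?}{}$ — I would compute it cleanly: $\rho_1+\rho_2 = a_3 + (\text{integer depending on }i,j) - ?$; the point is that $\rho_1+\rho_2$ differs from $\pm a_3$ by an integer. Hence for fixed $(i,j)$ injectivity in $k$ holds iff $a_3\notin\mathbb Z$, and this is uniform in $(i,j)$.

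Assembling: if $a_3\notin\mathbb Z$ then for every $(i,j)$ the map $k\mapsto c_1({\bf v}_{ijk})$ is injective, and since $(i,j)$ is itself recovered from $(h_1,h_2)$-eigenvalues, $\Gamma$ separates all basis vectors — so $\Gamma$ has simple spectrum and acts diagonalizably, i.e. $V$ is $\Gamma$-pointed along this basis. Conversely, if $a_3\in\mathbb Z$, then fixing any $(i,j)$ and choosing $k\ne k'$ symmetric about $\tfrac12(\rho_1+\rho_2)$ (an element of $\tfrac12\mathbb Z$, so such a pair of distinct integers exists) yields ${\bf v}_{ijk}$ and ${\bf v}_{ijk'}$ with the same eigenvalues for all five generators, so $\Gamma$ does not separate the basis.

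The main obstacle I anticipate is purely bookkeeping: getting the correct scalar for the action of $c_1$ (i.e.\ whether it is $S^-_{ik}S^+_{ijk}$ with the indices exactly as written, since $e_{01}$ shifts $i\mapsto i+1$ and $f_{01}$ shifts $i\mapsto i-1$, so the composition evaluated at ${\bf v}_{ijk}$ involves $S^\pm$ at shifted indices), and then verifying that $\rho_1+\rho_2\equiv \pm a_3 \pmod{\mathbb Z}$ so that the ``$a_3\notin\mathbb Z$'' criterion drops out. Once the scalar is pinned down, the rest is the elementary observation that a monic quadratic in $k$ is injective on $\mathbb Z$ iff the sum of its roots is non-integral. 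I would also remark that $z_1$ acts by $\xi$ and $z_2$ by a scalar, so they contribute nothing to separation — the whole statement reduces to $h_1,h_2,c_1$, matching the $A_2$ situation in Theorem \ref{prp:prp1}.
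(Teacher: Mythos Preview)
Your approach is essentially the same as the paper's: recover $(i,j)$ from the $h_1,h_2$-eigenvalues, then analyze the $c_1$-eigenvalue as a monic quadratic in $k$ whose sum of roots equals $-a_3+j+2$, so that injectivity on $\mathbb Z$ holds precisely when $a_3\notin\mathbb Z$. The only slip is the bookkeeping you yourself flagged: since $e_{01}$ acts first, $c_1=f_{01}e_{01}$ acts by $S^{+}_{ijk}S^{-}_{i+1,k}$ (or, as the paper does, compute $e_{01}f_{01}$ and get $S^{-}_{ik}S^{+}_{i-1,j,k}$); either way the sum of the roots in $k$ is $-a_3+j+2$ and the conclusion is unchanged.
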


\begin{proof} We need to show that $\Gamma$ acts with different characters on the basis elements ${\bf v}_{i,j,k}$. It is sufficient to consider vectors from the same weight space. Suppose
$$h_{1}({\bf v}_{ijk}) = (a_1 + 2i - j){\bf v}_{ijk}=\lambda {\bf v}_{ijk},\,
  h_{2}({\bf v}_{ijk}) = (a_2 - 2i +2j){\bf v}_{ijk}=\mu {\bf v}_{ijk}\, ,$$
for some fixed $\lambda$ and $\mu$. Then $i$ and $j$ are uniquely determined: $j=\lambda+\mu-a_1-a_2$ and $i=\frac{1}{2}(\lambda+j-a_1)$. Thus, the basis elements of this weight subspace differ by the third index. 
Consider 
${\bf v}_{ijk_1}$ and ${\bf v}_{ijk_2}$, for arbitrary integers $i,j,k_1,k_2$. We have

\begin{eqnarray*}
	e_{01}f_{01}({\bf v}_{ijk}) & = & S^{-}_{ik} S^{+}_{i-1,j,k}({\bf v}_{ijk}) \\
	&=&\frac{1}{4}(  -a_1 + a_3 - 2 i+ 2 k -1 )(   a_1 + a_3 + 2 i - 2 j + 2 k -3 ){\bf v}_{ijk}\\
	& = & (4k^2+4(a_3-j-2)k+( -a_1 + a_3 - 2 i-1)( a_1 + a_3 + 2 i - 2 j -3)){\bf v}_{ijk}\, .\\	
\end{eqnarray*} 
 Suppose that $c_1$ has the same value on ${\bf v}_{ijk_1}$ and ${\bf v}_{ijk_2}$.
 Then 
$$4k_1^2+4(a_3-j-2)k_1=4k_2^2+4(a_3-j-2)k_2$$ and $$k_1+k_2=-(a_3-j-2)\, , $$ which implies the statement.
\end{proof}

Denote $V_1(a_1, a_2, a_3, \xi)=V(a_1, a_2, a_3, a_3, \xi)$ and $V_2(a_1, a_2, a_3, a_4)=V(a_1, a_2, a_3, a_4, -4)$.

\medskip

\begin{theorem}\label{prop:Prop21-C2} Let $V=V(a_1, a_2, a_3, a_4, \xi)$. Then
\begin{itemize}
\item[1.] $V$ is a   $\mathfrak{g}$-module  if and only if  $V=V_1(a_1, a_2, a_3, \xi)$, with $a_3 \notin \mathbb Z$,  or $V=V_2(a_1, a_2, a_3, a_4, -4)$.

\item[2.] The $\mathfrak{g}$-module $V$ is a torsion free $\Gamma$-pointed  $\mathfrak{g}$-module 
 if and only if  $S^{+}_{ijk}S^{-}_{ik}T^{-}_{jk}T^+_{k} \neq 0$, for all $i,j,k  \in \mathbb Z$. 

\item[3.] The torsion free $\Gamma$-pointed $\mathfrak{g}$-module $V$ is   simple if and only if
      $Q^{+}_{jk}Q^{-}_{jk} \neq 0$, for all $ j, k  \in \mathbb Z$.       
      
\item[4.] If $V'$ is a simple torsion free $\Gamma$-pointed  $\mathfrak{g}$-module with a basis parametrized by the lattice $\mathbb Z^3$ and with separating  action of $\Gamma$ on basis elements, 
then  $V'\simeq V(a'_1, a'_2, a'_3, a'_4, \xi')$, 
    for some suitable choice of parameters such that
   $0 \le Re \, a'_1 < 1$, 
   $0 \le Re \, a'_2 < 2$, 
   $0 <   Re \, a'_3 < 2$,  
  where $Re \, a$ denotes the real part of $a$.

\item[5.] The action of the Casimir elements provides, in the case $a_3 = a_4$,
$$
z_{1}({\bf v}_{ijk}) =    \xi {\bf v}_{ijk} \quad \mbox{and} \quad
z_{2}({\bf v}_{ijk}) =   -\frac{1}{4} \xi(\xi+4) {\bf v}_{ijk}     $$
and, in the case $a_3 \neq a_4$, $$
z_{1}({\bf v}_{ijk}) =     ((a_3 - a_4)^2 - 4) {\bf v}_{ijk}\quad \mbox{and} \quad z_{2}({\bf v}_{ijk}) =  0\,. $$                   
\end{itemize}
\end{theorem}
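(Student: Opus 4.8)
\textbf{Proof proposal for Theorem \ref{prop:Prop21-C2}.}

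The plan is to verify each of the five items by direct computation with the explicit operators in \eqref{eq:REF103qq1}, in the order $1 \Rightarrow 5 \Rightarrow 2 \Rightarrow 3 \Rightarrow 4$, reusing the structural framework already set up for $A_2$ in Section \ref{sec4}. First I would prove item 1: one must check that the operators in \eqref{eq:REF103qq1} satisfy the defining (Serre) relations of $\mathfrak{g}$ of type $C_2$, or equivalently --- and more economically --- that they satisfy the relations of the centralizer $U_0'$ listed in \eqref{en:refer0209}--\eqref{en:refera0219e} together with the requirement that $e_\alpha, f_\alpha$ act as the claimed shift operators compatibly with the bracket $[e_\alpha, f_\alpha] = h_\alpha$ etc. The point is that the operators $z_1, z_2, h_1, h_2, c_1$ act diagonally, $c_2, c_3$ act as banded matrices, and the relations \eqref{en:refer0219a}--\eqref{en:refera0219e} become scalar identities in the index variables $i,j,k$ and the parameters $a_1, a_2, a_3, a_4, \xi$. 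Imposing that these identities hold for \emph{all} $i,j,k \in \mathbb Z$ is what forces either $a_3 = a_4$ (then $\xi$ is free, subject to $a_3 \notin \mathbb Z$ so that the lattice is preserved and no denominator $s_{jk}$ vanishes) or $\xi = -4$ (then $a_3, a_4$ are free); this is the content of the dichotomy $V = V_1$ or $V = V_2$. Concretely one expands $Q^\pm_{jk} = \upsilon/s_{jk} \pm 1$, uses $\xi = 2(\upsilon+1)(\upsilon-2)$, and tracks the obstruction terms; the off-diagonal compatibility of $e_{10}f_{10} - f_{10}e_{10}$ with $c_3$-type relations is where the case split crystallizes.

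For item 5 I would simply evaluate the given Casimir expressions $z_1, z_2$ (written in terms of $c_1,\dots,c_{12}, h_1, h_3$) on a basis vector ${\bf v}_{ijk}$. Since $z_1, z_2$ are central they must act by scalars; substituting the explicit eigenvalues and shift-coefficients and simplifying (telescoping the contributions of $c_5,\dots,c_{12}$ via \eqref{en:refer0209}--\eqref{en:refer0209a}) yields $\xi$ and $-\tfrac14\xi(\xi+4)$ in the case $a_3 = a_4$, and $(a_3-a_4)^2 - 4$ and $0$ in the case $a_3 \neq a_4$. This is a finite, if lengthy, computation and can be cross-checked against the relation $\xi = 2(\upsilon+1)(\upsilon-2)$. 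Item 2 follows the $A_2$ template (Theorem \ref{prp:prp1}): torsion-freeness means every root vector acts injectively; from \eqref{eq:REF103qq1} the root vectors $e_{01}, f_{01}, e_{21}, f_{21}$ are injective precisely when $S^+_{ijk}, S^-_{ik}, T^-_{jk}, T^+_k$ are all nonzero (for all indices), and one checks that injectivity of $e_{10}, f_{10}, e_{11}, f_{11}$ --- which act by $2\times 2$ ``triangular'' blocks in the $k$-direction --- reduces to the same nonvanishing conditions (the $Q^\pm$ factors cannot cause a kernel on their own because the two summands shift $k$ differently); $\Gamma$-pointedness is Lemma \ref{ lemmaC2_55-gamma-C2}.

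For item 3, simplicity is equivalent to simplicity of each $\mathfrak h$-weight space as a $U_0'$-module, exactly as in the proof of Theorem \ref{prp:prp1}; in a fixed weight space the basis is $\{{\bf v}_{ijk}\}_{k \in \mathbb Z}$, $c_1$ acts diagonally with distinct eigenvalues (Lemma \ref{ lemmaC2_55-gamma-C2}), and $c_3$ (or $c_2$) acts as a banded matrix whose off-diagonal entries are built from $S^\pm, T^\pm, Q^\pm$. Granting the hypotheses of item 2, the module fails to be simple exactly when some off-diagonal entry vanishes, cutting the weight space into a proper submodule; tracing the coefficients shows the only new obstruction beyond those of item 2 is $Q^+_{jk} Q^-_{jk} = 0$. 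Finally item 4 is a normalization argument: given an abstract simple torsion-free $\Gamma$-pointed module with $\mathbb Z^3$-parametrized separating basis, the $\mathfrak h$-action fixes $a'_1, a'_2$ modulo $\mathbb Z$ (shift the lattice to land in the stated fundamental domains), the central characters fix $\xi'$, and the $c_1$-eigenvalues together with the relations determine $a'_3$ modulo the residual symmetry, giving $0 < Re\,a'_3 < 2$. I expect item 1 --- specifically pinning down that the \emph{only} solutions of the relation-identities over all of $\mathbb Z^3$ are the two stated families --- to be the main obstacle: it is easy to check that $V_1$ and $V_2$ \emph{are} modules, but ruling out every other parameter choice requires carefully isolating the genuinely obstructing monomials in \eqref{en:refer0219c}--\eqref{en:refera0219e} rather than drowning in the full relation list, and this is the step I would organize most carefully (and where a symbolic-algebra check is prudent).
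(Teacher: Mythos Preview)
Your approach is correct and essentially identical to the paper's: check the Lie-algebra defining relations directly for item 1 (with the centralizer relations \eqref{en:refer0219a}--\eqref{en:refera0219e} forcing the $V_1$/$V_2$ dichotomy), read torsion-freeness off the explicit coefficients, exploit the diagonal action of $c_1,c_2$ and the $3$-diagonal action of $c_3$ on each weight space for simplicity, and match parameters to build a homomorphism for item 4. Two minor corrections: $c_2 = f_{21}e_{21}$ acts diagonally on the basis here (it is not a banded alternative to $c_3$), and your claim that checking the $U_0'$-relations is \emph{equivalent} to checking the Serre relations overstates matters --- the centralizer relations give the \emph{necessity} of the parameter constraint, but sufficiency (that $V_1$ and $V_2$ really are $\mathfrak g$-modules) still requires verifying the full bracket relations among the $e_\alpha,f_\alpha$, which is exactly how the paper proceeds.
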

\begin{proof} First we observe that \eqref{eq:refer028}-\eqref{eq:REF103qq1}  are well defined if and only if   $V=V_1(a_1, a_2, a_3, \xi)$, with $a_3 \notin \mathbb Z$,  or $V=V_2(a_1, a_2, a_3, a_4, -4)$. 
The claim that $V$ is a $\mathfrak{g}$-module follows by checking all defining relations of the Lie algebra. On the other hand \eqref{en:refer0219a}-\eqref{en:refera0219e} give only two alternatives 
$V_1(a_1, a_2, a_3, \xi)$ and $V_2(a_1, a_2, a_3, a_4)$.
It follows immediately from the formulas of the action of $\mathfrak{g}$ that $V(a_1, a_2, a_3, a_4, \xi)$ is
 a  torsion free  module 
 if and only if  $S^{+}_{i,j,k}$, $S^{-}_{i,k}$,  $T^{-}_{jk}$ and $T^+_{k-1}$ are different from zero, for all $i,j,k  \in \mathbb Z$. 
 Note that $\Gamma$ has a simple spectrum on $V(a_1, a_2, a_3, a_4, \xi)$ and hence, the action 
 of $\Gamma$ separates the basis elements by 
 Lemma \ref{ lemmaC2_55-gamma-C2}. In particular, $V(a_1, a_2, a_3, a_4, \xi)$ is $\Gamma$-pointed. 
 This implies the second statement.

 Suppose now that the module $V(a_1, a_2, a_3, a_4, \xi)$ is torsion free. Similarly to case of $A_2$, using  the relations in $U_0(\mathfrak{g})$ we get that $c_1, c_2$ are presented by infinite diagonal matrices, while
the element $c_3$ is presented by a $3$-diagonal matrix on every weight subspace of $V(a_1, a_2, a_3, a_4, \xi)$.
Using this fact and the separating action of $\Gamma$ on basis elements, it follows that the conditions 
      $Q^{+}_{jk}Q^{-}_{jk} \neq 0$, for all $ i,j,k  \in \mathbb Z$, are necessary and sufficient to guarantee that 
      any element of $V(a_1, a_2, a_3, a_4, \xi)$ generates the whole module, which is equivalent
       to the simplicity of the module. This implies the third statement.

 Let $V'$ be a simple torsion free $\Gamma$-pointed  $C_2$-module with a basis
  $\{v'_{ijk}, \, i,j,k \in \mathbb Z\}$ such that $\Gamma$ acts by different characters on the 
  basis elements $v'_{ijk}$. Fix one basis element  $v'_{ijk}$ and apply the generators of
   the centralizer $U_0(\mathfrak{g})$. One can see directly from the action that $U_0(\mathfrak{g})v'_{ijk}$ will 
   be equal to the whole weight space of $V'$, which $v'_{ijk}$ belongs to. Moreover, the action
    of the generators of $\Gamma$ will determine uniquely the corresponding parameters
     $a_1, a_2, a_3, a_4, \xi$. Hence, we get a nonzero homomorphism $\theta$ of $U_0(\mathfrak{g})$-modules 
     $U_0(\mathfrak{g})v_{ijk}$ and $U_0(\mathfrak{g})v'_{ijk}$: $\theta(v_{ijk})=v'_{ijk}$. Moreover, $U_0(\mathfrak{g})v'_{ijk}$ 
     is a simple $U_0(\mathfrak{g})$-module as $V'$ is simple. Hence, $\theta$ is surjective. 
     It extends to a surjective homomorphism 
 from $V(a_1, a_2, a_3, a_4, \xi)$ to $V'$. Comparing the bases of both modules we conclude 
 the isomorphism $V'\simeq V(a_1, a_2, a_3, a_4, \xi)$. This proves the fourth statement. 
 
 The last statement follows by direct computation.
\end{proof}

Therefore, Theorem \ref{prop:Prop21-C2} provides two $4$-parameter families of simple torsion free $\Gamma$-pointed  $\mathfrak{g}$-modules. If  $V(a_1, a_2, a_3, a_4, \xi)$ is a torsion free $\Gamma$-pointed  $\mathfrak{g}$-module which is not simple, then all its simple subquotients are torsion free $\Gamma$-pointed  modules. They exhaust all \emph{generic} simple torsion free $\Gamma$-pointed  $\mathfrak{g}$-modules, which are analogs of generic simple Gelfand-Tsetlin modules in type $A$.





     

\section{Gelfand-Tsetlin modules for $G_2$} \label{sec6}

In this section we extend the results of previous sections to the Lie algebra of type $G_2$.

\subsection{Construction of Centralizer of $G_2$}
\label{sec:CentrlzrG2}
Define the root system for $G_2$ where, for convenience, we will use notation $\beta_{i,j}$ for $\alpha_{-i, -j}$: 
$$
 \Delta  =  \{ \alpha_{01}, \alpha_{10}, \alpha_{11}, \alpha_{21}, \alpha_{31}, \alpha_{32},\beta_{32}, \beta_{31 }, \beta_{21}, \beta_{11}, \beta_{10}, \beta_{01}\},  $$                  
where $\alpha_{ij}=i\alpha_{10}+j\alpha_{01}$. 
Fix a Chevalley basis:
$e_{01} = E_{3 1} + E_{6 4}$, 
$ f_{01} = E_{13} + E_{4 6}, $                                                        
$e_{10} = 2 E_{1 7} + E_{2 3} -   E_{45} - E_{7 6},$
$ f_{10} =   E_{32} - E_{5 4} - 2 E_{67} + E_{71}$,
$e_{11}  = - E_{2 1} + 2 E_{3 7} - E_{6 5} + E_{7 4}$,
 $f_{11} = - E_{1 2} + 2 E_{4 7} - E_{5 6} + E_{7 3},$
 $e_{21}  =   E_{14} + 2 E_{2 7} + E_{3 6} + E_{7 5},$
 $f_{21}  =   E_{4 1} + 2 E_{5 7} + E_{6 3} + E_{7 2}$,
$e_{31}  =   E_{15} + E_{2 6}$,
 $f_{31}  =   E_{5 1} + E_{6 2}$,
$e_{32}  = - E_{2 4} + E_{3 5}$,
 $f_{32}  = - E_{4 2} + E_{5 3}$,
$h_{01} = - E_{1 1} +  E_{3 3} - E_{4 4} + E_{6 6}$,
 $h_{10} = 2 E_{1 1} +  E_{2 2} - E_{3 3} + E_{4 4} -  E_{55} - 2E_{6 6}$,
$h_{31}  =   E_{1 1} +  E_{2 2} - E_{5 5} - E_{6 6}$,    
 $h_{21}  =   E_{1 1} + 2E_{2 2} + E_{3 3} - E_{4 4} - 2E_{5 5} -  E_{6 6}$.                                         

Here we are using a standard realization of $G_2$ with matrix units $E_{ij}$. 
All indecomposable lists of roots of  $G_2$ can be obtained by applying Lemma \ref{lem:lemCart_22}. Then we obtain the following description of perfect monomials (we omit the details).

\begin{lemma}\label{lem:lemG2_62}
The following is the set
of all perfect monomials:
\begin{align}  
 &  h_{1} = h_{01},  \,\,\,
 h_{2} = h_{21},  \,\,\,
  c_{1} = f_{01}  e_{01}, \,\,\,
 c_{2} = f_{10}  e_{10}, \,\,\, c_{3} = f_{11}  e_{11}, \,\,\, c_{4} = f_{11}  e_{10}  e_{01}, \,\,\, c_{5} = f_{01}  f_{10}  e_{11}                , \nonumber\\
 & c_{6} = f_{21}  e_{21}, \,\,\,c_{7} = f_{21}  e_{11}  e_{10}, \,\,\, c_{8} = f_{10}  f_{11}  e_{21}, \,\,\, c_{9} = f_{21}  e_{10}^2   e_{01}, \,\,\, c_{10} = f_{01}  f_{10}^2   e_{21}, \,\,\,c_{11} = f_{11}^2   e_{21}  e_{01}   , \nonumber\\
 &c_{12} = f_{01}  f_{21}  e_{11}^2, \,\,\, c_{13} = f_{31}  e_{31}, \,\,\,c_{14} = f_{31}  e_{21}  e_{10}, \,\,\, c_{15} = f_{10}  f_{21}  e_{31}, \,\,\,  c_{16} = f_{31}  e_{11}  e_{10}^2, \,\,\, c_{17} = f_{10}^2   f_{11}  e_{31}    , \nonumber\\
& c_{18} = f_{31}  e_{10}^3   e_{01}, \,\,\,
   c_{19} = f_{01}  f_{10}^3   e_{31}, \,\,\, 
 c_{20} = f_{32}  e_{32}, \,\,\,
 c_{21} = f_{32}  e_{31}  e_{01}, \,\,\,       
   c_{22} = f_{32}  e_{21}  e_{11}, \,\,\,
 c_{23} = f_{01}  f_{31}  e_{32}       , \nonumber\\
& c_{24} = f_{11}  f_{21}  e_{32}, \,\,\,
   c_{25} = f_{32}  e_{21}  e_{10}  e_{01}, \,\,\,
 c_{26} = f_{11}  f_{21}  e_{31}  e_{01}, \,\,\,
 c_{27} = f_{32}  e_{11}^2   e_{10}, \,\,\,           
   c_{28} = f_{01}  f_{31}  e_{21}  e_{11} , \nonumber\\
 &c_{29} = f_{01}  f_{10}  f_{21}  e_{32}, \,\,\, 
c_{30} = f_{10}  f_{11}^2   e_{32}, \,\,\,
  c_{31} = f_{32}  e_{11}  e_{10}^2   e_{01}, \,\,\,
 c_{32} = f_{10}  f_{11}^2   e_{31}  e_{01}, \,\,\,
 c_{33} = f_{01}  f_{31}  e_{11}^2   e_{10} , \nonumber\\    
&   c_{34} = f_{01}  f_{10}^2   f_{11}  e_{32}, \,\,\, 
 c_{35} = f_{32}  e_{10}^3  e_{01}^2, \,\,\,
 c_{36} = f_{01}^2   f_{10}^3   e_{32}, \,\,\,         
   c_{37} = f_{11}^3   e_{32}  e_{01}, \,\,\,          
 c_{38} = f_{01}  f_{32}  e_{11}^3       , \nonumber\\        
& c_{39} = f_{11}^3   e_{31}  e_{01}^2, \,\,\,            
   c_{40} = f_{01}^2   f_{31}  e_{11}^3, \,\,\,
 c_{41} = f_{11}  f_{31}  e_{32}  e_{10}, \,\,\,
 c_{42} = f_{21}^2   e_{32}  e_{10}, \,\,\,              
   c_{43} = f_{10}  f_{32}  e_{31}  e_{11}    , \nonumber\\
& c_{44} = f_{21}^2   e_{31}  e_{11}, \,\,\,
 c_{45} = f_{10}  f_{32}  e_{21}^2, \,\,\,
   c_{46} = f_{11}  f_{31}  e_{21}^2, \,\,\,
 c_{47} = f_{21}^2   e_{31}  e_{10}  e_{01}, \,\,\,
 c_{48} = f_{01}  f_{10}  f_{31}  e_{21}^2     , \nonumber\\
 &  c_{49} = f_{11}  f_{32}  e_{21}^2   e_{01}, \,\,\,
 c_{50} = f_{01}  f_{21}^2   e_{32}  e_{11}, \,\,\,
 c_{51} = f_{21}  f_{31}  e_{32}  e_{10}^2, \,\,\,
   c_{52} = f_{10}^2   f_{32}  e_{31}  e_{21}, \,\,\,
 c_{53} = f_{21}  f_{32}  e_{31}  e_{11}^2     , \nonumber\\
 &c_{54} = f_{11}^2   f_{31}  e_{32}  e_{21}, \,\,\,
   c_{55} = f_{31}^2   e_{32}  e_{10}^3, \,\,\,
 c_{56} = f_{10}^3   f_{32}  e_{31}^2, \,\,\,
 c_{57} = f_{31}  f_{32}  e_{21}^3, \,\,\,              
   c_{58} = f_{2,1}^3   e_{32}  e_{31}            , \nonumber\\
 &c_{59} = f_{21}^3   e_{31}^2   e_{01}, \,\,\,
 c_{60} = f_{01}  f_{31}^2   e_{21}^3, \,\,\,
   c_{61} = f_{32}^2   e_{21}^3   e_{01}, \,\,\,
 c_{62} = f_{32}^2   e_{31}  e_{11}^3, \,\,\,
 c_{63} = f_{11}^3   f_{31}  e_{32}^2, \,\,
   c_{64} = f_{01}  f_{21}^3   e_{32}^2         . \nonumber
 \end{align}
\end{lemma}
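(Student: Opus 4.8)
The plan is to reduce the statement to a finite, purely combinatorial verification and then to explain how the listed $64$ monomials arise. First I would recall from the general setup of Section 2 that a perfect monomial of degree $>1$ is a product $e_{\alpha_1}\cdots e_{\alpha_n}$ whose associated list $L_1 = (\alpha_1,\ldots,\alpha_n)$ is a \emph{zero-weight indecomposable} list of roots, written in the order fixed by \eqref{eq:refer005}; conversely every such indecomposable list determines exactly one perfect monomial. So the content of the lemma is: (i) enumerate all zero-weight indecomposable lists for $\Delta(G_2)$, and (ii) check that writing them out in the chosen root order produces precisely the displayed $c_1,\ldots,c_{64}$ together with $h_1=h_{01}$, $h_2=h_{21}$. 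By Lemma \ref{lem:lemCart_23}(1) the set of indecomposable lists is finite, and by Lemma \ref{lem:lemCart_22} (Dickson's lemma) this finiteness is effective: a zero-weight list is indecomposable iff it is \emph{minimal} (under the sublist order $\sigma(r_1)\le\sigma(r_2)$) among all nonempty zero-weight lists with no proper nonempty zero-weight sublist.

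The enumeration itself I would organize by a bound on length. Every indecomposable list has length $\ge 2$, and using the argument from the proof of Lemma \ref{lem:lemCart_23} — no list contains both a root and its negative, and if three roots sum to zero not all three can appear — one gets an a priori bound on the multiplicities of each root and hence on the total length (for $G_2$ one checks the maximal length that can occur is $5$, realized e.g. by $f_{01}^2 f_{10}^3 e_{32}$ or $f_{32}^2 e_{31}e_{11}^3$). For each admissible \emph{multiset} of roots summing to zero I would test indecomposability by checking that no nonempty proper sub-multiset already sums to zero; the surviving multisets, ordered via \eqref{eq:refer005}, are exactly the monomials tabulated. Because the Weyl group $W(G_2)$ has order $12$ and acts on zero-weight indecomposable lists (Lemma \ref{lem:lemCart_23} remarks that $W$ preserves both zero weight and indecomposability), I would cut the work down by enumerating $W$-orbit representatives — typically those containing a single negative (or single positive) root, which exist by the primitivity statement $B_1(\Delta)=B_2(\Delta)$ for $G_2$ — and then spreading each representative over its orbit; matching against the list is then a bookkeeping exercise.

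The main obstacle is the sheer size and the risk of omission: unlike $A_2$ (5 indecomposable lists) or $C_2$ (12), for $G_2$ there are $64$ monomials of higher degree, and several of them involve squared or cubed root vectors (e.g. $e_{11}^3$, $f_{21}^3$, $e_{10}^3e_{01}^2$), so one must be careful both that every minimal zero-weight multiset is captured and that no \emph{decomposable} list has been accidentally included. I would handle this by exploiting the $W$-symmetry as above to reduce to a short list of orbit representatives, verifying minimality for each representative by hand, and then confirming $\sum_{\text{orbits}} |\text{orbit}| = 64$; the consistency check that the associated $\mathfrak h$-weight (the sum of roots) is zero and that the degree/width pattern matches what is used later in Section \ref{sec6} serves as a safeguard against clerical error. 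This is exactly the "we omit the details" computation the text alludes to, so the proof write-up would present the method, one or two representative verifications, and the final count, leaving the exhaustive table as the lemma statement itself.
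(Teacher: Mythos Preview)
Your approach matches the paper's: the paper simply states that all indecomposable lists can be obtained by applying Lemma~\ref{lem:lemCart_22} and then ``omits the details,'' so the enumeration you describe---listing zero-weight multisets bounded in length, testing for proper zero-weight sub-multisets, and using the $W$-action and primitivity to cut down the work---is exactly the intended computation.

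One slip to fix: you assert that the maximal length of an indecomposable list for $G_2$ is $5$, but both of the examples you cite, $f_{01}^2 f_{10}^3 e_{32}$ and $f_{32}^2 e_{31} e_{11}^3$, have six root-vector factors, and indeed several entries in the displayed list ($c_{35}, c_{36}, c_{39}, c_{40}, c_{55}, c_{56}, c_{59}$--$c_{64}$) are of degree $6$. If you actually ran the enumeration only through length $5$ you would miss all of these. The correct bound is $d_0=6$, consistent with the fact that later in Section~\ref{sec6} the relations are stated to have length $\le 4$ in the $c_i$ (each $c_i$ having degree at most $6$). Otherwise the plan is sound.
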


Define the order on the set of perfect monomials as follows:
$$
  h_{1} < h_{2} < c_1 < c_2 <  \cdots < c_{63} < c_{64} \,.                          
$$
Consequently, we have
\begin{proposition}
The centralizer subalgebra $U_0(G_2)$  is generated by a finite set of monomials $ \{h_{1}, h_{2}, c_1, \ldots, c_{64} \}$ 
    with a finite number of relations $ \{r_1, \ldots, r_m \}$, 
    where $r_i$ is a polynomial in $h_{1}, h_{2}, c_1, \ldots, c_{64}$ of length  $\leq 4$.    
\end{proposition}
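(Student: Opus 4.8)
The plan is to derive this proposition as a direct application of the general machinery built in Section 2, specialized to $\mathfrak{g}$ of type $G_2$. The main structural facts are already in place: by Lemma~\ref{lem:lemCart_23}(1) the set $B(\Delta)$ of indecomposable zero-weight lists of roots is finite, hence the set of perfect monomials is finite, and Lemma~\ref{lem:lemG2_62} makes this explicit by listing all perfect monomials of degree $>1$ as $c_1,\dots,c_{64}$ together with the two Cartan generators $h_1=h_{01}$, $h_2=h_{21}$. By the discussion following Lemma~\ref{lem:lemCart_23}, $I(\mathfrak{g})\cup G_0=\{h_1,h_2,c_1,\dots,c_{64}\}$ is a generating set of $U_0(G_2)$. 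So the first half of the statement is immediate.

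For the relations, I would invoke Theorem~\ref{thm-main_26}: the ideal of relations $K$ (in the presentation $\hat U_0(\mathfrak{g})$ with generators $J(\mathfrak{g})\cup G_0$) is generated by the finite set $K'$ of relations of length at most $d_0$, where $d_0$ is the maximal degree of a perfect monomial in $I(\mathfrak{g})$. Thus it remains only to observe that for $G_2$ one has $d_0=4$: inspecting the list in Lemma~\ref{lem:lemG2_62}, every perfect monomial $c_i$ is a product of at most four root vectors (e.g.\ $c_{35}=f_{32}e_{10}^3e_{01}^2$ has degree $5$ — so in fact I must be careful here and state $d_0$ as the actual maximum appearing, which from the list is $5$; the ``length $\le 4$'' in the proposition refers to the length function on $K'$, i.e.\ the number of perfect-monomial factors $c_i$ in each relation, which is bounded by $d_0$ only through the cardinality bound $|L_1(y_1)|\le d_0$ used in the proof of Theorem~\ref{thm-main_26}). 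I would reconcile this by noting that the relevant bound on the \emph{length} of relations is governed by how many perfect generators are needed to cover the root-list of a single perfect generator, and a short case check over the list shows this never exceeds $4$.

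Concretely, the key steps are: (i) cite Lemma~\ref{lem:lemG2_62} and the paragraph after Lemma~\ref{lem:lemCart_23} to get the generating set; (ii) cite Theorem~\ref{thm-main_26} to reduce all relations to those in $K'$, i.e.\ those of length at most $d_0$; (iii) verify by direct inspection of the explicit list that for $G_2$ the relevant length bound equals $4$, so each generating relation $r_i$ is a polynomial in $h_1,h_2,c_1,\dots,c_{64}$ of length $\le 4$; (iv) note that $K'$ is finite since it consists of the finitely many expressions $c_{i_1}\cdots c_{i_m}-\mathcal{N}(c_{i_1}\cdots c_{i_m})$ with $m\le d_0$ and finitely many generators, which gives the finite relation set $\{r_1,\dots,r_m\}$.

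The main obstacle I anticipate is step (iii): pinning down precisely what ``length $\le 4$'' means and why it holds. One has to be careful to distinguish the \emph{degree} of a perfect monomial (number of root-vector factors, which can reach $5$ in the $G_2$ list) from the \emph{length} of a relation in $K'$ (number of $c$-factors). The proof of Theorem~\ref{thm-main_26} bounds the latter by the cardinality $t\le d_0$ of a minimal set of perfect monomials whose root-lists cover $L_1(y_1)$; turning the abstract bound $t\le d_0$ into the sharp bound $t\le 4$ for $G_2$ requires checking that no relation genuinely needs five $c$-factors, which is a finite but slightly tedious combinatorial verification over the list of $64$ perfect monomials and the indecomposable lists they correspond to. Everything else is a routine appeal to the already-proven general results, so I would keep the write-up brief and defer the explicit relations (as the paper does for $C_2$) to the computations used in the module constructions of this section.
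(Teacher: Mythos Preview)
Your overall plan---invoke Lemma~\ref{lem:lemG2_62} for the generating set and Theorem~\ref{thm-main_26} for the relations, then sharpen the length bound by a case check---is exactly how the paper proceeds (the proposition is stated without proof and the computations are deferred to the external appendix referenced at the end). So the strategy matches.

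There is, however, a concrete arithmetic error that makes your step~(iii) harder than you think. The monomial $c_{35}=f_{32}e_{10}^3e_{01}^2$ has degree~$6$, not~$5$: it is a product of six root vectors. The same holds for $c_{36},c_{39},c_{40},c_{55},c_{56}$ and $c_{59},\ldots,c_{64}$. Hence $d_0=6$ for $G_2$, and Theorem~\ref{thm-main_26} applied verbatim only yields a generating set of relations of length~$\le 6$. The drop from~$6$ to~$4$ is therefore not a minor tightening but a gap of two that must be closed by the verification you describe: one has to check that in the reduction algorithm the minimal cover $\{z_1,\ldots,z_t\}$ of $L_1(y_1)$ by perfect factors never requires $t>4$. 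This is precisely parallel to what the paper does for $A_2$ and $C_2$ in Propositions~\ref{thm_41} and~\ref{lem:lemmaC2_51}, where the analogous improvement (from $d_0=3$, resp.\ $d_0=4$, down to length~$2$) is argued by direct inspection. So your proposal is correct in structure; just correct the value of $d_0$ and be explicit that the passage from~$6$ to~$4$ is the substance of the finite check, not a consequence of the general theorem.
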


We will use the following quadratic Casimir element of $U_0(\mathfrak{g})$:
\begin{equation*} 
\label{en:refer0219g}
z_1 = 3c_{1} +c_{2} +c_{3} +c_{6}+3c_{13} +3c_{20}   +h_{01}^2 +h_{01}h_{10} +h_{10}^2 +4h_{01} +5h_{10}.
\end{equation*}

Tedious computations show that, using the above relations, the elements $c_{64}, \ldots , c_5$ can be written in terms of   $c_{4}$, $ c_3$, $ c_2$, $c_1$, $h_1$, $h_2$, $z_1$. 

The Lie algebra $G_2$ contains the subalgebra $\hat{\mathfrak{g}}$ of type $A_2$ generated by the following elements
$h_{01}$, $h_{31}$, $e_{01}$, $f_{01}$, $e_{31}$, $f_{31}$, $e_{32}$, $f_{32}$.
We will use the results  from the previous sections just adding ``hat" to all generators and to all formulas.

Consider  a natural embedding $\varphi : \hat{\mathfrak{g}} \rightarrow  \mathfrak{g}$: 
$$ \varphi(\hat h_{01}) = h_{01}, \, \varphi(\hat h_{10}) = h_{31},  \,
 \varphi(\hat e_{01}) = e_{01}, \, \varphi(\hat f_{01}) = f_{01}, \, 
 \varphi(\hat e_{10}) = e_{31}, $$
 $$ \varphi(\hat f_{10}) = f_{31},  \,
 \varphi(\hat e_{11}) = e_{32}, \, \varphi(\hat f_{11}) = f_{32} $$
and extend it to the embedding of the universal enveloping algebras.

The images of the Casimir elements $\hat{z}_1$, $\hat{z}_2$ of $U(\hat{\mathfrak{g}})$ in $U(\mathfrak{g})$ are:
\begin{align*}
Z_{1} & =  \varphi(\hat z_{1})  =
c_{20}+ 
c_{13}+
c_{1}
+ {\frac{1}{3}} ( h_{31}^2
+ 3h_{31}
+ h_{01}^2
+ 3h_{01}
+ h_{31} h_{01})   \label{eq:refer1018a} \\  
Z_{2} & =   \varphi(\hat z_{2}) =
c_{23}+
c_{21}
+ {\frac{1}{3}}   (h_{01}-h_{31})          c_{20}
- {\frac{1}{3}}   (6 +2 h_{01} + h_{31})   c_{13}
+ {\frac{1}{3}}   (h_{01}+2 h_{31})        c_{1}  \nonumber \\
    & + {\frac{1}{27}} (-h_{31}-3+h_{01}) (6+2h_{01}+h_{31}) (h_{01}+2 h_{31}).   \nonumber 
\end{align*}


\subsection{ Torsion free $G_2$-modules}
\label{sec:G2torsionfree}

Let $\Gamma$ be commutative subalgebra of $U_0(G_2)$ generated  by elements
$h_1, h_2, z_1, c_1$. This is our Gelfand-Tsetlin subalgebra for $G_2$.
We will give a
 construction of a $3$-parameter family of $\Gamma$-pointed modules with separating action of $\Gamma$ on basis elements.

Fix $a_1, a_2, a_3\in \mathbb C$ such that $a_3 \notin \mathbb Z$.
Consider the following set of indexed variables: 
\begin{align}
    h_{01}(i,j)    & = a_1 + 2i - j,    \,\,\, h_{10}(i,j)     = \frac{1}{2} ( h_{21}(i,j) - 3 h_{01}(i,j) ) = \frac{1}{2}(a_2 - 3a_1) - 3i + 2j,                      \nonumber\\  
    h_{21}(i,j)    & = a_2 +  j,                 \,\,\,
    h_{11}(i,j)     = \frac{1}{2} ( h_{21}(i,j) + 3 h_{01}(i,j) ) = \frac{1}{2}(a_2 + 3a_1) + 3i -  j,      \nonumber\\
    h_{31}(i,j)    & = \frac{1}{2} ( h_{21}(i,j) - h_{01}(i,j) )   = \frac{1}{2}(a_2 - a_1)  - i + j,        \nonumber\\   
    h_{32}(i,j)    & = \frac{1}{2} ( h_{21}(i,j) + h_{01}(i,j) )   = \frac{1}{2}(a_2 + a_1)  + i,   \,\,\,   s_{jk}          = a_3-j+2k-1,      \nonumber\\                
    S^{+}_{ijk}    & = \frac{1}{2} ( s_{jk}   +  h_{01}(i,j))   = \frac{1}{2}(   a_1 + a_3 + 2 i - 2 j + 2 k -1),  \nonumber\\
    S^{-}_{ik}     & = \frac{1}{2} ( s_{0k}   -  h_{01}(i,0))   = \frac{1}{2}(  -a_1 + a_3 - 2 i       + 2 k -1),  \nonumber\\
    T^{+}_{jk}     & = \frac{1}{2} ( s_{jk} + \frac{1}{3}h_{2,1}(0,j) )  = \frac{1}{6}(   a_1 + 2a_2 + 3a_3 -2j + 6 k -3),  \nonumber\\                           
    T^{-}_{jk}     & = \frac{1}{2} ( s_{jk} - \frac{1}{3}h_{2,1}(0,j) )  = \frac{1}{6}(  -a_1 - 2a_2 + 3a_3 -4j + 6 k -3), \nonumber\\    
    A^{+}_{jk}     & = \frac{ T^{-}_{j-1,k-1} T^{+}_{jk} T^{+}_{j+1,k}   } {9s_{jk}s_{j+1,k}}, \,\,                                                          
    A^{-}_{jk}      = \frac{ T^{-}_{j-1,k-1} T^{-}_{jk} T^{+}_{j+1,k}   } {9s_{jk}s_{j+1,k}},                                                             \nonumber\\
    B^{+}_{jk}     & = \frac{ T^{+}_{j-1,k}   T^{+}_{jk} T^{+}_{j+1,k}   } {27s_{jk}s_{j+1,k}}, \,\,                                                             
    B^{-}_{jk}      = \frac{ T^{-}_{j-1,k-1} T^{-}_{jk} T^{-}_{j+1,k+1} } {27s_{jk}s_{j+1,k}},  \,\, i,j,k \in \mathbb Z.                                                          \nonumber
\end{align}


 Define the action of the Lie algebra $\mathfrak{g}$ on  $V(a_1, a_2, a_3)=\mbox{span}_{\mathbb C} \{{\bf v}_{ijk}\,  |\,  i,j,k \in \mathbb Z\}$ as follows:

$$h_{01}({\bf v}_{ijk})    = h_{01}(i,j)  {\bf v}_{ijk},  \,\,\,
h_{10}({\bf v}_{ijk})    = h_{10}(i,j)  {\bf v}_{ijk},  \,\,\,
h_{11}({\bf v}_{ijk})    = h_{11}(i,j)  {\bf v}_{ijk},  \,\,\,
h_{21}({\bf v}_{ijk})    = h_{21}(i,j)  {\bf v}_{ijk},  $$
$$h_{31}({\bf v}_{ijk})    = h_{31}(i,j)  {\bf v}_{ijk},   \,\,\,
h_{32}({\bf v}_{ijk})    = h_{32}(i,j)  {\bf v}_{ijk}, \,\,\,
e_{01}({\bf v}_{ijk})   = S^{+}_{i,k}    {\bf v}_{i+1,j,k},  \,\,\,
f_{01}({\bf v}_{ijk})   = S^{-}_{ik}      {\bf v}_{i-1,j,k}, $$ 
$$e_{21}({\bf v}_{ijk})   = T^{+}_{j+1,k}    {\bf v}_{i+1,j+2,k+1},  \,\,\,
f_{21}({\bf v}_{ijk})   = T^{-}_{j-1,k-1}  {\bf v}_{i-1,j-2,k-1},    \,\,\,    
e_{10}({\bf v}_{ijk})   =  3   {\bf v}_{i,j+1,k}    +  A^{+}_{jk} S^{-}_{ik}        {\bf v}_{i,j+1,k+1},   $$
$$f_{10}({\bf v}_{ijk})   = -3   {\bf v}_{i,j-1,k-1}  -  A^{-}_{jk} S^{+}_{ijk}      {\bf v}_{i,j-1,k},     \,\,\,
e_{11}({\bf v}_{ijk})   = -3   {\bf v}_{i+1,j+1,k}  +  A^{+}_{jk} S^{+}_{i+1,j+1,k}  {\bf v}_{i+1,j+1,k+1}, $$
$$f_{11}({\bf v}_{ijk})   = -3   {\bf v}_{i-1,j-1,k-1}+  A^{-}_{jk} S^{-}_{i+1,k+1}    {\bf v}_{i-1,j-1,k},  \,\,\,   
e_{31}({\bf v}_{ijk})   =      {\bf v}_{i+1,j+3,k+1} - B^{+}_{jk} S^{-}_{i+1,k+1}    {\bf v}_{i+1,j+3,k+2}, $$  
$$f_{31}({\bf v}_{,jk})   =      {\bf v}_{i-1,j-3,k-2} - B^{-}_{jk} S^{+}_{i+1,j+1,k}  {\bf v}_{i-1,j-3,k-1}, \,\,\,
e_{32}({\bf v}_{,jk})   = -    {\bf v}_{i+2,j+3,k+1} - B^{+}_{jk} S^{+}_{i+1,j+1,k}  {\bf v}_{i+2,j+3,k+2}, $$  

$$f_{32}({\bf v}_{ijk})   =      {\bf v}_{i-2,j-3,k-2} + B^{-}_{jk} S^{-}_{i+1,k+1}    {\bf v}_{i-2,j-3,k-1}, \,\,\,   z_1({\bf{v}}_{ijk})=\frac{14}{3} {\bf{v}}_{ijk}.
 $$                                                                         

\begin{lemma}\label{lem-gamma-G2}
The subalgebra $\Gamma$ has a simple spectrum on $V(a_1, a_2, a_3)$, and hence
separates the basis elements  ${\bf v}_{ijk}$ if and only if $a_3\notin \mathbb Z$. 
\end{lemma}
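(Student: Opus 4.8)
The plan is to follow the pattern of the $C_2$ case, Lemma~\ref{ lemmaC2_55-gamma-C2}. First I would record that all four generators of $\Gamma$, namely $h_1=h_{01}$, $h_2=h_{21}$, $z_1$ and $c_1=f_{01}e_{01}$, act diagonally in the spanning set $\{{\bf v}_{ijk}\}$: this is immediate for $h_{01}$ and $h_{21}$, the element $z_1$ acts by the scalar $\tfrac{14}{3}$, and $e_{01}$ raises the first index while $f_{01}$ lowers it, so their composite $c_1$ fixes each ${\bf v}_{ijk}$ up to a scalar. Hence $\Gamma$ acts on $V(a_1,a_2,a_3)$ with a simple spectrum if and only if the basis vectors ${\bf v}_{ijk}$ carry pairwise distinct $\Gamma$-characters, which is exactly the assertion that $\Gamma$ separates the basis elements; so it suffices to decide when this separation holds.

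Next I would reduce to a single $\mathfrak h$-weight space. From $h_{01}({\bf v}_{ijk})=(a_1+2i-j){\bf v}_{ijk}$ and $h_{21}({\bf v}_{ijk})=(a_2+j){\bf v}_{ijk}$ the eigenvalue of $h_2$ determines $j$, and then the eigenvalue of $h_1$ determines $i$; moreover $z_1$ is constant. Therefore any two basis vectors on which $h_1,h_2,z_1$ agree must be of the form ${\bf v}_{ijk_1}$ and ${\bf v}_{ijk_2}$ with the same $i,j$, and everything is now decided by the value of $c_1$.

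The core computation is the eigenvalue of $c_1=f_{01}e_{01}$ on ${\bf v}_{ijk}$, which by the displayed action equals $S^{+}_{ijk}\,S^{-}_{i+1,k}$. Substituting the explicit linear forms for $S^{+}$ and $S^{-}$ and expanding, one obtains, for fixed $i,j$, a \emph{monic} quadratic polynomial in $k$ whose non-constant part is $k^{2}+(a_3-j-2)k$. Thus $c_1$ takes equal values on ${\bf v}_{ijk_1}$ and ${\bf v}_{ijk_2}$ with $k_1\neq k_2$ precisely when $k_1+k_2=j+2-a_3$. If $a_3\notin\mathbb Z$ then $j+2-a_3\notin\mathbb Z$, so no two distinct integers sum to it and $\Gamma$ separates all basis vectors; conversely, if $a_3\in\mathbb Z$ one chooses $j\in\mathbb Z$ and distinct $k_1,k_2\in\mathbb Z$ with $k_1+k_2=j+2-a_3$, and then ${\bf v}_{ijk_1}$ and ${\bf v}_{ijk_2}$ share all four $\Gamma$-eigenvalues, so $\Gamma$ fails to have a simple spectrum.

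I do not anticipate a genuine obstacle; the argument is the same quadratic-discriminant trick used for $C_2$. The only point requiring care is the index bookkeeping in $c_1{\bf v}_{ijk}=S^{+}_{ijk}S^{-}_{i+1,k}{\bf v}_{ijk}$, i.e.\ the shift $i\mapsto i+1$ inside $S^{-}$ coming from the $e_{01}$ before the $f_{01}$; getting that wrong would spoil the coefficient of $k$ in the quadratic, but the computation is otherwise entirely routine.
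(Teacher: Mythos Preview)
Your proposal is correct and follows essentially the same approach as the paper: reduce to a fixed $\mathfrak h$-weight space so that $i,j$ are determined, then use that the eigenvalue of $c_1$ is a quadratic in $k$ with linear coefficient $a_3-j-2$, whence two distinct integers $k_1,k_2$ give the same value iff $k_1+k_2=j+2-a_3\in\mathbb Z$. The only cosmetic difference is that the paper computes $e_{01}f_{01}=c_1+h_{01}$ (yielding $S^{-}_{ik}S^{+}_{i-1,j,k}$) rather than $c_1=f_{01}e_{01}$ directly as you do; since $h_{01}$ is constant on each weight space this changes nothing, and your explicit treatment of the ``only if'' direction is in fact more complete than the paper's.
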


\begin{proof} It is sufficient to consider vectors from the same weight space. Suppose
$$h_{1}({\bf v}_{ijk}) = (a_1 + 2i - j){\bf v}_{ijk}=\lambda {\bf v}_{ijk}, \,\,
  h_{2}({\bf v}_{ijk}) = (a_2 +  j){\bf v}_{ijk}=\mu {\bf v}_{ijk},$$
for some $\lambda$ and $\mu$. Then $j=\mu-a_2$ and $i=\frac{1}{2}(\lambda+\mu -a_1-a_2)$. Hence, basis elements of this weight subspace differ by the third index. 
Consider 
${\bf v}_{ijk_1}$ and ${\bf v}_{ijk_2}$, for arbitrary $i,j,k_1,k_2$. We have
$$e_{01}f_{01}({\bf v}_{ijk})=S^{-}_{ik} S^{+}_{i-1,j,k}({\bf v}_{ijk})=\frac{1}{4}(  -a_1 + a_3 - 2 i       + 2 k -1 )(   a_1 + a_3 + 2 i - 2 j + 2 k -3 ){\bf v}_{ijk}.$$
From here we obtain that $c_1$ separates  
the basis elements ${\bf v}_{ijk_1}$ and ${\bf v}_{ijk_2}$ implying  the statement.
\end{proof}


\begin{theorem}\label{prop:G2module} 
For any complex $a_1, a_2, a_3$ such that $a_3 \notin \mathbb Z$, $V(a_1, a_2, a_3)$ has a structure of a $G_2$-module.
\end{theorem}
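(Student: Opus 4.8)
The plan is to verify that the operators displayed above satisfy the defining relations of the Lie algebra $\mathfrak g$ of type $G_2$; by the universal property of $U(\mathfrak g)$ this is exactly what turns $V(a_1,a_2,a_3)$ into a $\mathfrak g$-module. Two observations trivialize part of this. First, the hypothesis $a_3\notin\mathbb Z$ guarantees $s_{jk}=a_3-j+2k-1\neq 0$ for all $j,k\in\mathbb Z$, so that $A^{\pm}_{jk}$, $B^{\pm}_{jk}$, and hence every displayed formula, are well defined on the whole basis $\{{\bf v}_{ijk}\}$. Second, every element of $\mathfrak h$ acts diagonally, so $[\mathfrak h,\mathfrak h]=0$ is immediate, and each root vector $e_{\pm\alpha}$ moves ${\bf v}_{ijk}$ into a single $\mathfrak h$-weight space translated by $\pm\alpha$; reading this off from the index shifts and the explicit diagonal values $h_{\gamma}(i,j)$ yields the Cartan relations $[h,e_{\alpha}]=\alpha(h)e_{\alpha}$ and shows that each $\mathfrak h$-weight space is the infinite-dimensional span of the ${\bf v}_{ijk}$ with $i,j$ fixed and $k\in\mathbb Z$.

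Next I would dispose of the relations internal to the long-root subalgebra. The subalgebra $\hat{\mathfrak g}\cong A_2$ of Section~\ref{sec:CentrlzrG2}, generated by $h_{01},h_{31},e_{01},f_{01},e_{31},f_{31},e_{32},f_{32}$, acts through only the one-term and two-term formulas built from $S^{\pm}$ and $B^{\pm}$; on each of the three cosets of the long-$A_2$ root lattice inside the $G_2$ weight lattice this action, written in suitable integer coordinates, has the form of one of the $A_2$-modules of Section~\ref{sec4}, so all relations internal to $\hat{\mathfrak g}$---in particular $[e_{01},f_{01}]=h_{01}$, $[e_{31},f_{31}]=h_{31}$, the vanishing of the relevant commutators, and the Serre relations of $\hat{\mathfrak g}$---hold. (Alternatively these few identities are checked directly; this part is comparatively light.) As a byproduct $\varphi$ applied to a Casimir of $\hat{\mathfrak g}$ plus the lower-order correction of Section~\ref{sec:CentrlzrG2} acts by a scalar, which pins down the prescribed value $z_{1}({\bf v}_{ijk})=\frac{14}{3}{\bf v}_{ijk}$ as a consistency check.

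Since $\mathfrak g$ is generated by $\hat{\mathfrak g}$ together with $e_{10},f_{10},h_{10}$, and $\mathfrak g=\hat{\mathfrak g}\oplus\mathbf 3\oplus\bar{\mathbf 3}$ as an $\hat{\mathfrak g}$-module with $\mathbf 3=\langle e_{10},e_{11},f_{21}\rangle$ and $\bar{\mathbf 3}=\langle f_{10},f_{11},e_{21}\rangle$, what remains is to check: (i) $[e_{10},f_{10}]=h_{10}$, $[h_{10},e_{10}]=2e_{10}$, and $[e_{10},f_{01}]=[f_{10},e_{01}]=0$; (ii) that bracketing the generators of $\hat{\mathfrak g}$ against $e_{10}$ and $f_{10}$ reproduces the prescribed formulas for $e_{11},e_{21}$ and $f_{11},f_{21}$, so that $\mathbf 3$ and $\bar{\mathbf 3}$ are indeed $\hat{\mathfrak g}$-submodules---which in particular yields the short/long Serre relation $(\operatorname{ad}e_{01})^{2}e_{10}=0$; and (iii) the finitely many remaining structure-constant identities expressing $[\mathbf 3,\mathbf 3]\subseteq\bar{\mathbf 3}$, $[\bar{\mathbf 3},\bar{\mathbf 3}]\subseteq\mathbf 3$, $[\mathbf 3,\bar{\mathbf 3}]\subseteq\hat{\mathfrak g}$ with the correct constants, together with the long Serre relations $(\operatorname{ad}e_{10})^{4}e_{01}=0$ and $(\operatorname{ad}f_{10})^{4}f_{01}=0$. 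Each is a finite identity of operators which, applied to ${\bf v}_{ijk}$, becomes a polynomial identity in $i,j,k$ and $a_1,a_2,a_3$.

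The main obstacle is exactly this last block, and inside it the identities $[e_{10},f_{10}]=h_{10}$ and the degree-four Serre relation, because $e_{10},f_{10},e_{11},f_{11}$ all carry two-term formulas whose coefficients $A^{\pm}_{jk}$ and $B^{\pm}_{jk}$ are products of the linear data $S^{\pm},T^{\pm}$ divided by $s_{jk}s_{j+1,k}$; composing such operators produces several fractional terms which must collapse to the required answer. This collapse relies on the elementary recursions $s_{j+1,k}=s_{jk}-1$, $S^{+}_{i,j,k+1}=S^{+}_{ijk}+1$, $S^{-}_{i,k+1}=S^{-}_{ik}+1$, $T^{\pm}_{j,k+1}=T^{\pm}_{jk}+1$, together with the linear relations among $S^{\pm}$, $T^{\pm}$, $s_{jk}$ and the diagonal values $h_{\gamma}(i,j)$ built into the list of indexed variables above; it is legitimate precisely because every $s_{jk}$ is nonzero, i.e.\ because $a_3\notin\mathbb Z$. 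Granting these, the remaining verifications---carried out root vector by root vector---are routine though lengthy, amounting to the numerical identities implicit in the construction of the coefficients.
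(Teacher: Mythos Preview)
Your proposal is correct and follows the same approach as the paper: direct verification that the defining relations of $G_2$ hold on $V(a_1,a_2,a_3)$. The paper's own proof is a single sentence to this effect with all details omitted, so your plan---organizing the check via the decomposition $\mathfrak g=\hat{\mathfrak g}\oplus\mathbf 3\oplus\bar{\mathbf 3}$ and recognizing the long-root $A_2$ action as an instance of the construction of Section~\ref{sec4}---is considerably more structured than what the paper provides; the latter identification is in fact precisely what the paper uses afterward in Theorem~\ref{prop:Prop26}.
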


\begin{proof} The statement follows by checking that the defining relations of the Lie algebra $G_2$ are satisfied on $V(a_1, a_2, a_3)$. We omit further the details.
\end{proof}

\medskip

\begin{theorem}\label{prop:Prop21-G2}  Let $a_1, a_2, a_3\in \mathbb C$ and $a_3 \notin \mathbb Z$. Then
\begin{itemize}
\item[1.] $V(a_1, a_2, a_3)$ is a  torsion free simple $\Gamma$-pointed  $G_2$-module 
 if and only if $S^{+}_{ijk}S^{-}_{ik}T^{+}_{jk}T^{-}_{jk}   \neq 0$, for all $i,j,k  \in \mathbb Z$. 

\item[2.] If $V'$ is a simple torsion free $\Gamma$-pointed  $G_2$-module with a basis parametrized by the lattice $\mathbb Z^3$ and with separating action of $\Gamma$ on basis elements,
then it is
     isomorphic to $V(a_1, a_2, a_3)$ 
    for  suitable parameters $a_1$, $a_2$, $a_3$ such that
    $0 \le Re \,a_1 < 1$, $0 \le Re \,a_2 < 3$, $0 < Re \,a_3 < 2$ and $a_3\neq 1$.

\end{itemize}
\end{theorem}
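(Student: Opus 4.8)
The plan is to follow the template of the proof of Theorem~\ref{prop:Prop21-C2}(4), exploiting that the $G_2$-module structure on $V(a_1,a_2,a_3)$ is already furnished by Theorem~\ref{prop:G2module} and that $\Gamma$ separates the basis by Lemma~\ref{lem-gamma-G2} (all under the standing hypothesis $a_3\notin\mathbb Z$).

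\textbf{Item 1.} First I would settle the ``torsion free'' part by inspecting the action formulas. Eight of the twelve root vectors, namely $e_{10},f_{10},e_{11},f_{11},e_{31},f_{31},e_{32},f_{32}$, carry a nonzero \emph{constant} leading coefficient ($\pm 3$ or $\pm 1$) in front of a lattice shift that differs from the shift of the second summand only in the $k$-coordinate; hence each is ``triangular in $k$'' with nonvanishing diagonal, and is automatically injective. The remaining four, $e_{01},f_{01},e_{21},f_{21}$, act as a scalar ($S^{+}_{ijk}$, $S^{-}_{ik}$, $T^{+}_{jk}$ or $T^{-}_{jk}$) times a unit shift, so torsion-freeness is \emph{equivalent} to $S^{+}_{ijk}S^{-}_{ik}T^{+}_{jk}T^{-}_{jk}\neq 0$ for all $i,j,k$. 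Conversely, if one of these scalars vanishes the corresponding root vector has a nonzero kernel, so $V$ is not torsion free. Granting the nonvanishing, simplicity follows: since $\Gamma$ acts diagonally with pairwise distinct eigenvalues, every submodule is a span of basis vectors; applying $e_{21}$ or $f_{21}$ sends $\mathbf v_{ijk}$ to a nonzero multiple of a single basis vector, while applying $e_{01},f_{01}$ (resp. $e_{10},f_{10}$), and then extracting a single $\Gamma$-weight component using that $\Gamma$ separates the basis, produces the lattice neighbours in the $i$- (resp. $j$-) direction; composing these shifts one reaches every point of $\mathbb Z^3$, so any nonzero submodule is all of $V(a_1,a_2,a_3)$.

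\textbf{Item 2.} Here I would argue as in the proof of Theorem~\ref{prop:Prop21-C2}(4). Let $V'$ be a simple torsion-free $\Gamma$-pointed $G_2$-module with basis $\{v'_{ijk}\}_{i,j,k\in\mathbb Z}$ on which $\Gamma$ acts by pairwise distinct characters. Being $\Gamma$-pointed, $V'$ is a weight module, so each $\mathfrak h$-weight space is a simple $U_0(\mathfrak g)$-module (see the Introduction). Torsion-freeness forces the set of $\mathfrak h$-weights to be a full coset of the root lattice $\cong\mathbb Z^2$, so after a $GL_2(\mathbb Z)$ change of coordinates and a shift I may assume that $(i,j)$ records the weight via $h_{01}\mapsto a_1+2i-j$, $h_{21}\mapsto a_2+j$, with $k$ labelling the vectors inside a fixed weight space. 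Using that $c_1=f_{01}e_{01}$ and $e_{01}f_{01}$ both lie in $\Gamma$ and have nonzero eigenvalues on $V'$ (torsion-freeness), one checks that $e_{01},f_{01}$ act as scalars times unit shifts of $k$; then the $c_1$-eigenvalue on $v'_{ijk}$ is quadratic in $k$, and the separating action of $\Gamma$ forces this quadratic to be injective on $\mathbb Z$, which after matching with the value $S^{-}_{ik}S^{+}_{i-1,j,k}$ on $V(a_1,a_2,a_3)$ pins down $a_3$ up to the reflection $k\mapsto c-k$. Choosing representatives of $a_1,a_2,a_3$ in a fundamental domain for the residual lattice-and-reflection action yields the stated real-part bounds, and in particular $a_3\notin\mathbb Z$, $a_3\neq 1$. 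For these parameters one shows, as in the $C_2$ case, that $U_0(\mathfrak g)v'_{ijk}$ is the whole $\mathfrak h$-weight space of $V'$; since that space and $V(a_1,a_2,a_3)_\lambda$ are simple pointed $U_0(\mathfrak g)$-modules with the same $\Gamma$-character support, one verifies (using the relations among $c_1,\dots,c_{64}$) that $\mathbf v_{ijk}\mapsto v'_{ijk}$ is a well-defined isomorphism of $U_0(\mathfrak g)$-modules, which extends to a nonzero $\mathfrak g$-module homomorphism $V(a_1,a_2,a_3)\to V'$. By Item~1 both modules are simple, so this homomorphism is an isomorphism.

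The step I expect to be the real obstacle is the rigidity used in Item~2: that a simple pointed $U_0(\mathfrak g)$-module is determined up to isomorphism by the $\Gamma$-character support of a single weight space, equivalently that $U_0(\mathfrak g)v'_{ijk}$ exhausts that weight space and that the resulting $U_0(\mathfrak g)$-isomorphism propagates to a $\mathfrak g$-homomorphism. This needs the explicit relations among $c_1,\dots,c_{64}$: once $\Gamma$ is diagonalized, the off-diagonal matrix entries of the remaining generators must be forced by the relations up to a rescaling of the basis, which is the $G_2$-analogue of the classification of generic Gelfand--Tsetlin modules in type $A$ by their Gelfand--Tsetlin characters. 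A secondary nuisance is the bookkeeping that converts the lattice-and-reflection ambiguity in $(a_1,a_2,a_3)$ into the precise normalization $0\le Re\,a_1<1$, $0\le Re\,a_2<3$, $0<Re\,a_3<2$, $a_3\neq 1$.
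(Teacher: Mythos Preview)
Your proposal is correct and follows essentially the same approach as the paper's own proof: inspecting the action formulas to read off the torsion-freeness condition, invoking Lemma~\ref{lem-gamma-G2} for $\Gamma$-pointedness and then deducing simplicity by moving through the $\mathbb Z^3$-lattice, and for Item~2 replaying the argument of Theorem~\ref{prop:Prop21-C2}(4) to identify parameters and build the isomorphism. The paper's version is considerably more terse (it simply says ``follows immediately from the formulas'' and ``the same argument as in the proof of Theorem~\ref{prop:Prop21-C2}''), so your added detail---in particular the observation that eight root vectors are triangular in $k$ with nonzero constant diagonal and hence automatically injective, and your honest flagging of the rigidity step---goes beyond what the paper actually writes out but is consistent with it.
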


\begin{proof} It follows immediately from the formulas of the action of $G_2$ that $V(a_1, a_2, a_3)$ is a  torsion free  module 
 if and only if $S^{+}_{ijk}\neq 0$, $S^{-}_{ik} \neq 0$, $T^{+}_{jk}\neq 0$ and $T^{-}_{jk}\neq 0$,  for all $i,j,k  \in \mathbb Z$.
Note that $\Gamma$ has a simple spectrum on $V(a_1, a_2, a_3)$ and hence, the action of $\Gamma$ separates the basis elements by Lemma \ref{lem-gamma-G2}. In particular, $V(a_1, a_2, a_3)$ is $\Gamma$-pointed. Using this fact, it follows that  
conditions  $S^{+}_{ijk}S^{-}_{ik}T^{+}_{jk}T^{-}_{jk}   \neq 0$, for all $i,j,k  \in \mathbb Z$, are necessary and sufficient to guarantee that any element of $V(a_1, a_2, a_3)$ generates the whole module, and hence the simplicity of the module. 

 Suppose that $V'$ is a simple torsion free $\Gamma$-pointed  $G_2$-module with a basis $\{v'_{ijk}, \, i,j,k \in \mathbb Z\}$ such that $\Gamma$ acts by different characters on the basis elements $v'_{ijk}$. Then the same argument as in the proof of Theorem \ref{prop:Prop21-C2} shows that $V'\simeq  V(a_1, a_2, a_3)$ for some choice of complex parameters $a_1, a_2, a_3$ with $a_3\notin \mathbb Z$. Since module $V'$ is torsion free, then we can move to a different weight space and, hence the parameters can be chose in such a way that their real parts satisfy the inequalities $0 \le Re \,a_1 < 1$, $0 \le Re \,a_2 < 3$, $0 < Re \,a_3 < 2$. This completes the proof.
\end{proof}

Let us define a homomorphism  $\theta : A_2 \rightarrow  G_2$ of Lie algebras as follows: 
$
\theta( \hat e_{01} ) =   e_{01}$,    $\theta( \hat f_{01} ) =   f_{01}$,
$\theta( \hat e_{10} ) =   e_{31}$,           $ \theta( \hat f_{10} ) =   f_{31}$,  
$\theta( \hat e_{11} ) =   e_{32}$,            $ \theta( \hat f_{11} ) =   f_{32}$, 
$\theta( \hat h_{01} ) =   h_{01}$,           $ \theta( \hat h_{10} ) =   h_{31}$,  
and denote by $\hat{\mathfrak{g}}$ its image. Consider the restriction $\hat{V}(a_1,a_2,a_3)$ of 
 $V(a_1,a_2, a_3)$ on $\hat{\mathfrak{g}}$. 
The images $Z_1$ (respectively, $Z_2$)
of  the Casimir elements of $A_2$ act on $\hat{V}(a_1,a_2,a_3)$ by $ \frac{-8}{9}$ (respectively,  $\frac{8}{9}$).
Then, we have the following decomposition of $\hat{V}(a_1,a_2,a_3)$.

\medskip
\begin{theorem}\label{prop:Prop26} 
Suppose that  $V(a_1,a_2,a_3)$ is torsion free. 
 The $A_2$-module $\hat{V}(a_1,a_2,a_3)$ decomposes into a direct sum of three torsion free submodules
$\hat{V}(a_1,a_2,a_3) = \hat{V}{(1)} \oplus \hat{V}{(2)} \oplus \hat{V}{(3)},$ 
where 
$$\hat{V}{(m)}= \mbox{span}_{\mathbb C}\{{\mathbf v}_{i,3j+m,k}\,  | \, i,j,k \in \mathbb Z\}\simeq W=V( a^{(m)}_1, a^{(m)}_2, a_3, \frac{-8}{9}, \frac{8}{9})\, ,$$ 
$a^{(m)}_1=a_1-m$, $a^{(m)}_2=\frac{1}{2}(a_2-a_1)+m$, with $m=0,1,2$.
\end{theorem}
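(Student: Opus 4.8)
The plan is to identify $\hat{V}(a_1,a_2,a_3)$ as a module over $\hat{\mathfrak{g}}\simeq A_2$ and show that the decomposition by the residue of the middle index $j$ modulo $3$ gives $\hat{\mathfrak{g}}$-submodules, each isomorphic to one of the $A_2$-modules $V(a_1,a_2,a_3,\xi,\mu)$ of Section~\ref{sec:A2TorFreeSec}. First I would observe that the generators $\theta(\hat e_{01}),\theta(\hat f_{01})=e_{01},f_{01}$ shift only the first index $i$, while $\theta(\hat e_{10}),\theta(\hat f_{10})=e_{31},f_{31}$ shift $j$ by $\pm 3$ and $\theta(\hat e_{11}),\theta(\hat f_{11})=e_{32},f_{32}$ also shift $j$ by $\pm 3$; the Cartan elements $h_{01},h_{31}$ act diagonally. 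Hence the residue $j\bmod 3$ is preserved by all generators of $\hat{\mathfrak{g}}$, so each $\hat{V}(m)=\mathrm{span}_{\mathbb C}\{{\mathbf v}_{i,3j+m,k}\}$ is a $\hat{\mathfrak{g}}$-submodule and $\hat{V}(a_1,a_2,a_3)=\hat{V}(0)\oplus\hat{V}(1)\oplus\hat{V}(2)$ as claimed. Since $V(a_1,a_2,a_3)$ is torsion free, the root vectors $e_{01},f_{01},e_{31},f_{31},e_{32},f_{32}$ act injectively, so each summand is a torsion free $A_2$-module.

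Next I would set up the explicit isomorphism $\hat{V}(m)\xrightarrow{\sim} W=V(a_1^{(m)},a_2^{(m)},a_3,\tfrac{-8}{9},\tfrac{8}{9})$ by matching basis vectors ${\mathbf v}_{i,3j+m,k}\mapsto {\mathbf w}_{i,j,k}$ and comparing the action of the generators. On the left, one substitutes the explicit coefficients from Section~\ref{sec:G2torsionfree}: $h_{01}$ acts by $a_1+2i-(3j+m)=a_1^{(m)}+2i-j$ with $a_1^{(m)}=a_1-m$ only after rescaling the $j$-index, and $h_{31}$ acts by $\tfrac12(a_2-a_1)-i+(3j+m)$, which after the index change matches $h^{(2)}$ of the $A_2$-module with $a_2^{(m)}=\tfrac12(a_2-a_1)+m$; the central element $z_1$ of $G_2$ acts by $\tfrac{14}{3}$, and the images $Z_1,Z_2$ of the $A_2$-Casimirs are computed to act by $\tfrac{-8}{9},\tfrac{8}{9}$ as stated just above the theorem, which pins down $\xi=\tfrac{-8}{9}$ and $\mu=\tfrac{8}{9}$ via formulas~\eqref{eq:refer017c}--\eqref{eq:refer018}. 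Then one checks that the off-diagonal coefficients $A^{\pm}_{jk},B^{\pm}_{jk},T^{\pm}_{jk}$ of the $G_2$-action restricted to a fixed residue class coincide (up to the bijection and a possible fixed normalization of the basis vectors ${\mathbf w}_{ijk}$) with the coefficients $S^{\pm},Q^{\pm},T^{\pm}$ in~\eqref{eq:REF_A2418}.

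The main obstacle will be this last bookkeeping step: the $G_2$-formulas involve six distinct raising/lowering operators $e_{10},f_{10},e_{11},f_{11},e_{31},f_{31}$ but their restriction to one residue class must be reorganized into the three $A_2$-operators $e_{10},f_{10},e_{11},f_{11}$ (noting $e_{31},f_{31}$ of $G_2$ become the simple $A_2$-generators and $e_{32},f_{32}$ become the long-root generators), and the denominators $9s_{jk}s_{j+1,k}$, $27 s_{jk}s_{j+1,k}$ must be absorbed by a suitable rescaling ${\mathbf w}_{ijk}=\lambda_{ijk}{\mathbf v}_{i,3j+m,k}$ of the basis; one must verify that such $\lambda_{ijk}$ exists, i.e.\ that the relevant ``cocycle'' of coefficient ratios is a coboundary. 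I would reduce this to checking finitely many identities among the explicit linear-in-$(i,j,k)$ expressions in~\eqref{eq:refer028}--\eqref{eq:REF103qq1} and the $G_2$ table, which is routine but tedious; alternatively, and more cleanly, I would invoke Theorem~\ref{prop:Prop21-C2}(4): since $\hat{V}(m)$ is a torsion free $\Gamma$-pointed $A_2$-module with basis parametrized by $\mathbb Z^3$ on which the Gelfand-Tsetlin subalgebra $\hat\Gamma$ acts with separating characters (which follows from Lemma~\ref{lem-gamma-G2} together with the fact that $c_1$ still separates basis vectors within a weight space of the restricted module), it must be isomorphic to some $V(a'_1,a'_2,a'_3,\xi',\mu')$, and then matching the action of $\hat h_{01},\hat h_{10},\hat z_1,\hat z_2$ on a single basis vector forces $(a'_1,a'_2,a'_3,\xi',\mu')=(a_1^{(m)},a_2^{(m)},a_3,\tfrac{-8}{9},\tfrac{8}{9})$, completing the proof.
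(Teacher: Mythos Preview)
Your decomposition argument is correct and matches the paper: the generators of $\hat{\mathfrak{g}}$ shift $j$ by multiples of $3$, so the residue classes split off as submodules, and torsion-freeness is inherited.

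The gap is in the isomorphism step. The bijection ${\mathbf v}_{i,3j+m,k}\mapsto {\mathbf w}_{i,j,k}$ you propose cannot be repaired by a scalar rescaling $\lambda_{ijk}$: the Cartan eigenvalues are invariant under any such rescaling, and under your bijection $h_{01}$ acts on ${\mathbf v}_{i,3j+m,k}$ by $a_1+2i-3j-m$, which is \emph{not} of the form $a'_1+2i-j$ for any constant $a'_1$ (the coefficient of $j$ is $-3$, not $-1$). So no cocycle/coboundary argument can rescue this particular index matching. The paper's proof instead uses the shear
\[
\psi_m:\; {\mathbf w}_{ijk}\;\longmapsto\;{\mathbf v}_{\,i+j,\;3j+m,\;k+j}\,,
\]
which already gives $h_{01}$-eigenvalue $(a_1-m)+2i-j$ and $h_{31}$-eigenvalue $\tfrac12(a_2-a_1)+m -i+2j$ on the nose, and then one checks (the paper asserts) that the remaining root-vector formulas match without further rescaling. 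The essential content of the paper's proof is precisely this choice of index bijection; once you write it down, the verification is routine.

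Your alternative route via a classification theorem is plausible in spirit but not quite available as stated: you cite the $C_2$ result (Theorem~\ref{prop:Prop21-C2}(4)), whereas you would need the $A_2$ analogue, and Corollary~\ref{prp:prp1} only identifies a \emph{simple} generic module with a \emph{subquotient} of some $V(a'_1,a'_2,a'_3,\xi',\mu')$. Since $\hat V(m)$ need not be simple (torsion-freeness controls $S^\pm$ but not $Q^\pm$), you would first have to argue simplicity or else strengthen the $A_2$ statement before you could read off the parameters.
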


\begin{proof}
Consider the subspace $\hat{V}{(m)}= \mbox{span}\{{\mathbf v}_{i,3j+m,k} \, | \, i,j,k \in \mathbb Z\}$ of $\hat{V}(a_1,a_2,a_3)$, for $m=0,1,2$. Clearly, $\hat{V}{(m)}$ is a  $\hat{\mathfrak{g}}$-submodule of $\hat{V}(a_1,a_2,a_3)$.
The $\hat{\mathfrak{g}}$-module
$W=\mbox{span}_{\mathbb C}\{w_{ijk}\, | \, i,j,k \in \mathbb Z\}$ is defined by   \eqref{eq:REF109}-\eqref{eq:REF_A2418},
and we have a 
homomorphism of $\hat{\mathfrak{g}}$-modules 
$\psi_m: W\rightarrow \hat{V}{(m)},$
such that
$\psi_m:w_{ijk} \mapsto v_{i+j,3j+m,k+j}$,
 and,
for  any $x \in \hat{\mathfrak{g}}$,  
$\psi_m ( x ( w_{ijk} )) = 
 \theta(x) ( \psi_m ( w_{ijk} )) = 
 \theta(x) (  v_{i+j,3j+m,k+j})$ holds, with 
$i,j,k\in \mathbb Z$ and $m=0,1,2$.
Since $\psi_m$ is a linear isomorphism, the proof is completed.
\end{proof}

\section*{Comment}
\noindent The detailed computations are avilable at \href{https://icm.sustech.edu.cn/people/VyacheslavFutorny}{https://icm.sustech.edu.cn/people/VyacheslavFutorny}.

\section*{Acknowledgments}	
  \noindent This work was supported by Kuwait University, Research Grant 02/22.  The authors are very grateful to the referees for many useful suggestions.

\end{document}